\newcommand{\lvt}{\left|\kern-1.35pt\left|\kern-1.3pt\left|}
\newcommand{\rvt}{\right|\kern-1.3pt\right|\kern-1.35pt\right|}
\newtheorem{thm}{Theorem}[section]
\newtheorem{lem}[thm]{Lemma}
\newtheorem{prop}[thm]{Proposition}
\newtheorem{exam}[thm]{Example}
\newtheorem{defn}[thm]{Definition}
\theoremstyle{remark}
\newtheorem{rem}{Remark}[section]
 \def\la{{\langle}}
 \def\ra{{\rangle}}
 \def\d{\mathrm{d}}
 \def\sph{{\mathbb{S}^{d-1}}}
 \def\sb{{\mathsf b}}
 \def\sh{{\mathsf h}}
 \def\sw{{\mathsf w}}
 \def\sE{{\mathsf E}}
 \def\sG{{\mathsf G}}
 \def\sK{{\mathsf K}}
 \def\sO{{\mathsf O}}
 \def\sP{{\mathsf P}}
 \def\sQ{{\mathsf Q}}
 \def\sT{{\mathsf T}}
 \def\sW{{\mathsf W}}
 \def\fa{{\mathfrak a}}
 \def\fb{{\mathfrak b}}
 \def\fc{{\mathfrak c}}
  \def\ft{{\mathfrak t}}
 \def\fD{{\mathfrak D}}
 \def\a{{\alpha}}
 \def\b{{\beta}}
 \def\g{{\gamma}}
 \def\k{{\kappa}}
 \def\l{{\lambda}}
 \def\o{{\omega}}
 \def\s{\sigma}
 \def\la{{\langle}}
 \def\ra{{\rangle}}
 \def\tb{{\mathbf t}}
 \def\ub{{\mathbf u}}
 \def\vb{{\mathbf v}}
 \def\xb{{\mathbf x}}
 \def\yb{{\mathbf y}}
 \def\Kb{{\mathbf K}}
 \def\Pb{{\mathbf P}}
 \def\Qb{{\mathbf Q}}
 \def\Wb{{\mathbf W}}
 \def\CD{{\mathcal D}}
 \def\CH{{\mathcal H}}
 \def\CV{{\mathcal V}}
 \def\BB{{\mathbb B}}
 \def\NN{{\mathbb N}}
 \def\RR{{\mathbb R}}
 \def\VV{{\mathbb V}}
 \def\XX{{\mathbb X}}
 \def\ZZ{{\mathbb Z}}
      \def\proj{\operatorname{proj}}
\def\lla{\langle{\kern-2.5pt}\langle}      
\def\rra{\rangle{\kern-2.5pt}\rangle}
\newcommand{\wh}{\widehat}
\def\f{\frac}
\begin{document}
 
\title{Approximation and orthogonality on fully symmetric domains}
\author{Yuan~Xu}
\address{Department of Mathematics, University of Oregon, Eugene,
OR 97403--1222, USA}
\email{yuan@uoregon.edu}
\thanks{The author was partially supported by Simons Foundation Grant \#849676}
\date{\today}
\subjclass[2010]{33C45, 42C05, 42C10, 65D15, 65D20.}
\keywords{Orthogonal polynomials, domains of revolution, spectral operator, addition formula.}

\begin{abstract}
We study orthogonal polynomials on a fully symmetric planar domain $\Omega$ that is generated by 
a certain triangle in the first quadrant. For a family of weight functions on $\Omega$, we show that 
orthogonal polynomials that are even in the second variable on $\Omega$ can be identified with 
orthogonal polynomials on the unit disk composed with a quadratic map, and the same phenomenon 
can be extended to the domain generated by the rotation of $\Omega$ in higher dimensions. The  
connection allows an immediate deduction of results for approximation and Fourier orthogonal 
expansions on these fully symmetric domains. It applies, for example, to analysis on a double cone 
or a double hyperboloid. 
\end{abstract}

\maketitle

\section{Introduction}
\setcounter{equation}{0}

Most of the study of orthogonal polynomials (OPs) in several variables is carried out on structured domains, and 
one of the most well-studied cases is the unit ball (see, for example, \cite{ACH, DaiX1, DaiX, DX, X05}). 
OPs on the unit ball share, together with a handful of other families on regular domains, two characteristic 
properties of classical OPs: they are eigenfunctions of a second-order differential operator, and their 
reproducing kernel satisfies a closed-form formula, both of which serve as essential tools for studying 
approximation and Fourier orthogonal expansions (cf. \cite{DaiX1, DaiX, Dit, DX, PX, W} and their references). 
They are also building blocks for OPs on domains of revolution that have been studied recently 
\cite{OX, X21a, X21, X23a, X24} and reference therein. In particular, orthogonal structure on a double conic 
domain $\XX^{d+1}$ is studied in \cite{X21a}, 
where
$$
\XX^{d+1} = \left\{(\xb, t) \in \RR^{d+1}: \|\xb\| \le |t|, \,\, \xb \in \RR^d, \, t \in [-1,1] \right\}.
$$
It follows that OPs for a weight function on the double cone are divided into two subspaces, depending 
on whether they are even or odd in the $t$-variable, of different characteristics. In particular, for the weight 
function 
$$
   \Wb_{\b,\g}(\xb, t) = |t| \left(t^2 -\|\xb\|^2\right)^{\b - \f12} \left(1-t^2\right)^\g, \quad (\xb,t) \in \XX^{d+1}, 
$$
OPs that are even in the $t$-variable enjoy two characteristic properties of classical OPs. The same phenomenon 
also holds for double hyperbolic domains, and it motivates the recent study in \cite{X24}, where the domain of 
revolution is formulated as from rotating a two-dimensional region $\Omega$, which can be further reduced, when 
$\Omega$ is fully symmetric, to the domain $\Lambda$ so that $\{(x^2,y^2): (x,y) \in \Lambda\}$ is in the first 
quadrant of $\Omega$. In the case of the double cone, $\Lambda$ is the right-angled triangle $\{(s,t): 0 \le s \le t \le 1\}$.
In this setting, OPs on the domain of revolution can be deduced from those of two variables on $\Lambda$ and 
spherical harmonics. In particular, by considering different types of triangles, OPs that are even
in $t$-variable are shown to share the two characteristic properties of classical OPs for a family of domains 
of revolution in \cite{X24}. 

The purpose of the present paper is to reveal a hidden relation between several families of OPs in \cite{X24}, 
generated by those on triangles, to OPs on the unit ball. The relation shows, for example, OPs even in the 
$t$-variable for $\Wb_{\b,\g}$ on $\XX^{d+1}$ are essentially semi-classical, classical if $\b =0$, OPs on the 
unit ball $\BB^{d+1}$ composed with a quadratic transformation. With this relation, it is now easy to establish 
the two characteristic properties for OPs on a family of domains of revolution. This is somewhat unexpected 
since the ball is homogeneous with a smooth boundary, but the conic domains are not. Still, it may not be as 
surprising in retrospect, once the relation is understood. The correspondence has the effect of reducing the 
analysis on the double cone and its generations to the familiar ground of the unit ball. For example, 
approximation and localized polynomial frames on the double cone and hyperboloid were studied in \cite{X23a}, 
as a special case of the general framework developed in \cite{X21} based on highly localized kernels. While 
establishing the localized kernels took substantial efforts in \cite{X23a}, it can now be deduced from the 
existing result on the unit ball effortlessly. 

Restriction to OPs even in the $t$-variable may seem to be incomplete for approximation or Fourier orthogonal
expansions, but it is sufficient for functions that are even in the $t$-variable. Moreover, the fully symmetric
domain has a symmetry in the $t$-variable, so that we can consider only the upper half of the domain, call
it $\VV^{d+1}$. For the double cone $\XX^{d+1}$, for example, the upper half domain $\VV^{d+1}$ is the 
single cone. Given a function $f$ defined on $\VV^{d+1}$, we can extend it to the full domain by defining 
$f(\xb,-t) = f(\xb,t)$ evenly in the $t$-variable. Consequently, OPs even in the $t$-variable consist of a 
complete basis for $L^2$ space on $\VV^{d+1}$, which leads to an alternative Fourier orthogonal expansion
and a complete set for polynomial approximation that works for the spaces equipped with evenly symmetric
weight functions. This orthogonal structure is fundamentally different from the $L^2$ space on $\VV^{d+1}$ 
equipped with the Jacobi weight $(t^2-\|\xb\|^2)^{\b-\f12}(1-t)^\g$, studied in \cite{X20}, which also shares 
the two characteristic properties and has OPs of full dimension. 

The main ingredient for the hidden relation lies in the OPs of two variables on the fully symmetric domains
arising from the triangles, which extends the relation between semi-classical OPs on the disk and the 
classical OPs on the triangle. While the latter relation has been considered, see, for example,
\cite{DX, X24}, the space of OPs that are even in one of the variables has 
hardly been singled out and studied as its own entity in the literature. As a result, we shall treat the case
of two variables first and separately in this work. 

The paper is organized as follows. In the next section, we review the basics of OPs of two
variables, including the orthogonal structure in the fully symmetric setting. The particular family of OPs, 
generated by triangles, and the approximation theory of such polynomials are studied in the third section. 
These are extended in the fourth section to domains of revolution. 

\section{Preliminary: orthogonal polynomials of two variables}
\setcounter{equation}{0}

In this preliminary section, we discuss orthogonal polynomials (OPs) of two variables and the Fourier orthogonal
expansions. The first subsection reviews the basics and classical OPs on the triangle and the disk. The second subsection 
discusses OPs for a fully symmetric weight function. 

\subsection{OPs of two variables}
Let $\Omega$ be a domain in $\RR^2$ with a positive area. Let $\sW$ be a weight function on $\Omega$. 
We consider OPs under the inner product 
\begin{equation} \label{eq:ipd-2d}
  \la f, g\ra_\sW = \sb_\sW \int_\Omega f(\ub) g(\ub) \sW(\ub) \d \ub, \qquad \ub = (u_1,u_2), 
\end{equation}
where $\sb_\sW$ is the normalization constant of $\sW$ such that $\la 1, 1 \ra_\sW = 1$. If the weight function
$\sW$ is regular, that is, $\int_\Omega \|\ub\|^n \sW(\ub)\d \ub < \infty$ for $n \in \NN_0$, then OPs exist for 
all $n$. Let $\CV_n = \CV_n(\sW, \Omega)$ denote the space of OPs of degree $n$ for $n \in \NN_0$. Then
the space $\Pi_n^2$ of polynomials of degree $n$ in two variables satisfies 
$\Pi_n^2 = \bigoplus_{k=0}^n \CV_k(\sW, \Omega)$ and 
$$
\dim \CV_n(\sW,\Omega) = n+1 \quad \hbox{and}\quad \dim \Pi_n^2 = \binom{n+2}{2}, \quad n = 0,1, 2, \ldots. 
$$

Let $\{\sP_{j,n}: 0 \le j \le n\}$ be an orthonormal basis for $\CV_n(\sW,\Omega) $. Then the Fourier orthogonal 
expansion of $f$ is defined by 
$$
  f = \sum_{n=0}^\infty \sum_{j=0}^n \hat f_{j,n} \sP_{j,n}, \quad\hbox{where} \quad \hat f_{j,n} = \la f, \sP_{j,n} \ra_{\sW}.  
$$
The reproducing kernel $ \sP_n(\sW; \cdot,\cdot)$ of the space $\CV_n(\sW, \Omega)$ is defined by 
\begin{equation} \label{eq:reprod_2d}
    \sP_n(\sW; \ub, \vb) = \sum_{j=0}^n \sP_{j,n} (\ub) \sP_{j,n} (\vb),
\end{equation}
which is the kernel of the projection operator $\proj_n(\sW): L^2(\sW, \Omega) \mapsto \CV_n(\sW, \Omega)$, 
$$
    \proj_n (\sW; f, \ub) = \sb_\sW \int_\Omega f(\vb) \sP_n(\sW; \ub,\vb) \sW(\vb) \d \vb. 
$$
For regular $\sW$, the Fourier orthogonal series satisifes 
$$
   L^2(\sW, \Omega) = \bigoplus_{n=0}^\infty  \CV_n(\sW, \Omega): \quad f = \sum_{n=0}^\infty \proj_n(\sW; f). 
$$

We are interested in the cases when an orthogonal basis can be written explicitly in terms of classical OPs of
one variable. Among various classes of such families (cf. \cite{DX, K}), we single out two examples, one on the 
triangle and the other on the disk, which will play essential roles in our study in this paper. 

\subsubsection{Classical OPs on triangle}
On the triangle 
$$
   \triangle = \{(u,v): u \ge 0, v \ge 0, u+v \le 1\},
$$
the classical Jacobi weight function $\sW_{\a}^\triangle$ is defined by, for $\a = (\a_1,\a_2,\a_3)\in \RR^3$,
\begin{equation}\label{eq:Jacobi-w}
     \sW^{\a}_\triangle(u,v) = u^{\a_1} v^{\a_2} (1-u-v)^{\a_3}, \quad \a_i > -1.
\end{equation}
The normalization constant  $\sb^\triangle_{\a}$ of this weight function is given by 
\begin{equation}
   \sb^\triangle_{\a} = \left[ \int_\triangle  \sW^{\a}_\triangle(u,v) \d u \d v \right]^{-1} = \frac{\Gamma(|\a|+3)}{\Gamma(\a_1+1)\Gamma(\a_2+1)\Gamma(\a_3+1)},
\end{equation}
where $|\a|:= \a_1+\a_2+\a_3$. OPs for this weight function can be given explicitly via the Jacobi polynomials
$P_n^{\a,b)}$ of one variable, 
which are orthogonal with respect to the weight function 
$$
   w_{a,b}(t) = (1-t)^a ( 1+ t)^b, \qquad a, b > -1
$$
on the interval $[-1,1]$. In particular an explicit orthogonal basis of $\CV_m(\sW_{\a}^\triangle, \triangle)$ consists of 
polynomials $\{\sT_{j,m}^{\a}: 0 \le j \le m\}$, where \cite[Section 2.4]{DX} 
\begin{equation}\label{eq:triOP}
  \sT_{j,m}^{\a} (u,v) = P_{m-j}^{(2j+ \a_1+\a_3+1,\a_2)}(2 v -1) (1-v)^j P_j^{(\a_3,\a_1)}\left( \frac{2u}{1-v} -1\right).
\end{equation}  
These are classical orthogonal polynomials in two variables and possess many interesting properties. We mention
two of them. The first one is a closed-form formula for the reproducing kernel $\sP_n(\sW^{\a}_\triangle; \cdot,\cdot)$, 
 \begin{align} \label{eq:reprodT}
   \sP_n\left(\sW^{\a}_\triangle; \ub, \vb\right) = \,& c_\a  
       \int_{[-1,1]^3} Z_{2n}^{|\a|+2} \left(\sqrt{u_1}\sqrt{v_1}\, t_1 +\sqrt{u_2}\sqrt{v_2}\, t_2 + \sqrt{1-|\ub|}\sqrt{1-|\vb}\, t_3\right) 
       \notag \\
         & \times (1+t_1)(1-t_1^2)^{\a_1-\f12} (1+t_2) (1-t_2^2)^{\a_2-\f12}(1-t_3^2)^{\a_3-\f12} \d \tb,
\end{align} 
where $Z_n^\l$ is defined in terms of the Gegenbauer polynomial $C_n^\l$, 
\begin{equation}\label{eq:Zn}
  Z_n^\l =  \frac{n + \l} {\l} C_n^\l (t),
\end{equation}
and $c_\a = c_{\a_1} c_{\a_2} c_{\a_3}$ with $c_\a = \int_{-1}^1 (1-t^2)^{\a-\f12} \d t$. The identity \eqref{eq:reprodT} holds 
for $\a_i \ge -\f12$ with the understanding that, if one of more $\a_i = -\f12$, then it holds under the limit
\begin{equation}\label{eq:limit}
   \lim_{a \to -\f12 +} c_\a \int_{-1}^1 f(t) (1-t^2)^{\a-\f12} \d t = \frac{f(1) + f(-1)}2.
\end{equation}
The second one states that these OPs are eigenfunctions of a second-order differential operator defined by 
\begin{align} \label{eq:CD-T}
  \CD^{\a}_\triangle:= \, & u(1-u) \partial_{uu} - 2 u v \partial_{uv} + v(1-v) \partial_{vv} \\
           + \, & (\a_1+1 - (|\a| + 3) u ) \partial_u +  (\a_2 + 1 -(|\a|+ 3) v) \partial_v.  \notag
\end{align}
More precisely, 
\begin{align}\label{eq:eigen-T}
    \CD^{\a}_\triangle Y  = -n (n+|\a| + 2) Y,  \qquad Y \in \CV_m\Big(\sW^{\a}_\triangle, \triangle\Big).
\end{align}

\subsubsection{Semi-classical OPs on the unit disk}
On the unit disk $\BB^2 = \{(u,v): u^2+v^2 \le 1\}$, we consider the weight function 
\begin{equation}\label{eq:WBk}
    \sW^{\k}_\BB (u,v) = |u|^{2\k_1} |v|^{2\k_2} \left(1-u^2-v^2\right)^{\k_3}, \quad \k_1, \k_2 > -\tfrac12, \,\, \k_3 > -1.
\end{equation}
In the case of $\k_1 = \k_2 = 0$, they are the classical OPs, which have been extensively studied. OPs for
$\sW^\k_\BB$ can be given in terms of the generalized Gegenbauer polynomials $C_n^{(\l,\mu)}$ that are 
orthogonal with respect to the weight function
$$
    g_{\l,\mu} (t) = |t|^{2 \mu} (1-t^2)^{\l-\f12}, \qquad \mu, \l > -\tfrac12 
$$
on $[-1,1]$, which reduce to the classical Gegenbauer polynomials $C_n^\l$ when $\mu  = 0$. 
 An explicit orthogonal basis of $\CV_m(\sW^{\k}_\BB, \BB^2)$ consists of 
polynomials $\{\sG_{j,n}^{\k}: 0 \le j \le n\}$, where \cite{DX} 
\begin{equation}\label{eq:OP_disk}
  \sG_{j,n}^\k (u,v) = C_{n-j}^{(j + \k_1+\k_3 +1,\k_2)}(v) (1-v^2)^{\f{j}{2}} 
                C_j^{(\k_3+\f12,\k_1)} \bigg(\frac{u}{\sqrt{1-v^2}} \bigg). 
\end{equation}
These polynomials share analogs of the two properties we cited for OPs on the triangle. The first is 
a closed-form formula for the reproducing kernel $\sP_n(\sW^\k_\BB)$ of $\CV_n(\sW^\k_\BB, \BB^2)$ 
given by \cite[Theorem 8.1.16]{DX}
\begin{align} \label{eq:reprodB}
\sP_n(\sW^\k_\BB; \ub,\vb) = &  c_\k \int_{[-1,1]^3} Z_n^{|\k|+1} \left(u_1 v_1 t_1+ u_2 v_2 t_2 + 
  \sqrt{1-|\ub|^2} \sqrt{1-|\vb|^2}\, t_3 \right) \\
 & \times (1+t_1)(1+t_2) (1-t_1^2)^{\k_1-1} (1-t_2^2)^{\k_2-1}  (1-t_3^2)^{\k_3-\f12}\d t, \notag
\end{align}
which holds for $\k_1, \k_2 \ge 0$ and $\k_3 > -\f12$ and under the limit \eqref{eq:limit} when needed. 

The second is a second-order differential-difference operator, $\CD_\k$, defined by
\begin{align}\label{eq:diffB2}
  \CD^\k_\BB =  (1-u^2)\partial_{uu}  &\, - 2 u v \partial_{uv} + (1-v^2)\partial_{vv} - (2 |\k| + 3) (u \partial_u + v \partial_v) \\
     & + \k_1 \left(\frac 2 u \partial_u - \frac{1-\s_1}{u^2} \right)+\k_2 \left(\frac 2 v \partial_v - \frac{1-\s_2}{v^2} \right), \notag
\end{align}
where $|\k|  = \k_1+\k_2+\k_3$, and $\s_i$ denotes the reflection operator defined by
$$
  \s_1 f(u,v) = f(-u,v) \quad \hbox{and} \quad   \s_2 f(u,v) = f(u,-v), 
$$
that has OPs for $\sW_\k^\BB$ as eigenfunctions; more precisely, 
\begin{equation}\label{eq:eigenB2}
   \CD^\k_\BB Y = - n (n+ 2 |\k| + 2) Y, \qquad Y\in \CV_n(\sW^\k_\BB, \BB^2).
\end{equation}
This is stated in \cite[Theorem 8.1.3]{DX} in terms of the Dunkl Laplacian $\Delta_h$ with $d =2$, which is defined in 
\cite[(8.1.6)]{DX}. In particular, if $\k_1 = \k_2 =0$, then $\CD^\k_\BB$ reduces to the second-order differential operator
for classical OPs on the disk. 

It is worth mentioning that, according to the classification in \cite{KS}, there exists a second-order linear differential 
operator with polynomial coefficients having OPs in two variables as eigenfunctions, up to an affine transform,
only in five cases, three are cross products of Laguerre and Hermite polynomials, and the other two are the classical 
OPs on the triangle and the unit disk given above. The classification does not cover the semi-classical OPs on the disk,
for which the operator contains a difference part. 
 
\subsection{Fully symmetric OPs in two variables} 
We are interested in OPs when the domain and the weight function are fully symmetric. 

\begin{defn}
The domain $\Omega$ is called fully symmetric if $(u, v) \in \Omega$ implies $(\pm u, \pm v) \in \Omega$, and
the weight function $\sW$ defined on such a domain is called fully symmetric if $\sW(\pm u, \pm v) = 
\sW(u,v)$ for $(u,v) \in \Omega$.  
\end{defn}

\subsubsection{Structure of fully symmetric OPs}
A fully symmetric weight $\sW$ is determined by its values on the positive quadrant 
$\Omega_{+,+} = \{(u,v) \in \Omega: u \ge 0, v \ge 0\}$ and it is even in both variables. Hence, it can be written as 
\begin{equation} \label{eq:W=w}
  \sW(u,v) = \sw\left(u^2, v^2\right), \quad (u,v) \in \Omega,
\end{equation}
where the weight function $\sw$ is defined on the domain
\begin{equation} \label{eq:sqrtOmega}
   \sqrt{\Omega} = \left\{(s,t): s = \sqrt{u}, \, t= \sqrt{v},   \,\, (u,v) \in \Omega_{+,+} \right\}.  
\end{equation}
OPs for a fully symmetric weight function can be derived by four families of OPs on $\sqrt{\Omega}$ that 
are orthogonal with respect to $\sw_{\pm \f12, \pm \f12}$ defined by 
\begin{equation} \label{eq:w1212}
  \sw_{\pm \f12}(s,t) = s^{\pm \f12} t^{\pm \f12} \sw(s,t), \qquad (s,t) \in  \sqrt{\Omega}.
\end{equation}
Moreover, OPs in the space $\CV_n(\sW,\Omega)$ inherit symmetry that can be described by  
$$
  \CV_{2m}(\sW,\Omega) = \CV_{2m}^{\sE,\sE}(\sW,\Omega) \cup \CV_{2m}^{\sO,\sO}(\sW,\Omega) \,\, \hbox{and}\,\,
    \CV_{2m+1}(\sW,\Omega) = \CV_{2m+1}^{\sE,\sO} (\sW,\Omega)\cup \CV_{2m+1}^{\sO,\sE}(\sW,\Omega),
$$
where $\CV_{2m}^{\sE,\sE}(\sW,\Omega)$ consists of OPs even in both variables in $\CV_{2m}(\sW,\Omega)$, 
and $\CV_{2m+1}^{\sE,\sO}(\sW,\Omega)$ consists of OPs even in the first variable and odd for the second 
variable in $\CV_{2m+1}(\sW,\Omega)$, for example. The other cases are defined similarly.

Let $\sb(\sw)$ denote the normalization of $\sw$. Then it is easy to see that $\sb(\sw_{-\f12,-\f12}) = \sb_\sW$ for $\sW$
in \eqref{eq:W=w}. 

\begin{thm} \label{thm:FullSymB2}
Let $\Omega$ and $\sW$ be fully symmetric. Let $\{\sP_{j,m}\big(\sw_{\pm \f12, \pm \f12}\big): 0 \le j \le m\}$ 
be an orthonormal basis under the inner product \eqref{eq:ipd-2d} defined via
$\sw_{\pm \f12, -\f12}$ on $\sqrt{\Omega}$, 
Then 
\begin{align*}
    \sP_{j,2m}^{\sE,\sE}(\sW; u,v) & = \sP_{j,m}\left(\sw_{-\f12, -\f12}; u^2,v^2 \right), \quad 0 \le j \le m,\\
    \sP_{j,2m}^{\sO,\sO}(\sW; u,v) & = \frac{\sqrt{\sb\big(\sw_{\f12, \f12}\big)}}{\sqrt{\sb_\sW}} 
       u v \sP_{j,m-1}\left(\sw_{\f12, \f12}; u^2,v^2\right),  \quad 0 \le j \le m-1
 \end{align*}
consist of an orthonormal basis for  $\CV_{2m}^{\sE,\sE} (\sW, \Omega)$ and $\CV_{2m}^{\sO,\sO} (\sW, \Omega)$ respectively,
and 
\begin{align*}
    \sP_{j,2m+1}^{\sE,\sO} (\sW; u,v) & = \frac{\sqrt{\sb\big(\sw_{-\f12, \f12}\big)}}{\sqrt{\sb_\sW}} v \sP_{j,m}\left(\sw_{-\f12, \f12}; u^2,v^2\right), \quad 0 \le j \le m, \\
    \sP_{j,2m+1}^{\sO,\sE} (\sW; u,v) & = \frac{\sqrt{\sb\big(\sw_{\f12, -\f12}\big)}}{\sqrt{\sb_\sW}} u \sP_{j,m}\left(\sw_{-\f12, \f12}; u^2,v^2\right),
    \quad 0 \le j \le m.
 \end{align*}
consist of an orthonormal basis for  $\CV_{2m+1}^{\sE,\sO} (\sW, \Omega)$ and $\CV_{2m+1}^{\sO,\sE}(\sW, \Omega)$
respectively.
\end{thm}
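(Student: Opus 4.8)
The plan is to derive all four identities at once from the quadratic substitution $(u,v)\mapsto (s,t)=(u^2,v^2)$, which maps the positive quadrant $\Omega_{+,+}$ bijectively onto $\sqrt{\Omega}$ and, after exploiting the full symmetry of $\sW$, converts the inner product $\la\cdot,\cdot\ra_\sW$ on $\Omega$ into the four inner products attached to $\sw_{\pm\f12,\pm\f12}$ on $\sqrt{\Omega}$. The first, purely formal, step is to check degrees and parities: a function $\sP_{j,m}(\sw;u^2,v^2)$ is a polynomial of degree $2m$ that is even in both variables, and multiplying it by $u$, $v$, or $uv$ produces the other three parity types while raising the degree by $1$, $1$, or $2$. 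Matching this against the decomposition of $\CV_n(\sW,\Omega)$ into pure-parity subspaces recalled above places each candidate family in the claimed space $\CV_{2m}^{\sE,\sE}$, $\CV_{2m}^{\sO,\sO}$, $\CV_{2m+1}^{\sE,\sO}$, or $\CV_{2m+1}^{\sO,\sE}$ --- once we confirm that each family really consists of orthogonal polynomials rather than arbitrary polynomials of the right degree.

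The core is the change of variables. For the even--even family the integrand is even in each variable, so I would fold $\int_\Omega$ into $4\int_{\Omega_{+,+}}$ and then set $s=u^2$, $t=v^2$, whose Jacobian contributes $\tfrac14 s^{-\f12}t^{-\f12}$. The surviving factor $s^{-\f12}t^{-\f12}$ multiplies $\sw(s,t)$ to give exactly $\sw_{-\f12,-\f12}(s,t)$ by \eqref{eq:w1212}, and with the identity $\sb_\sW=\sb(\sw_{-\f12,-\f12})$ noted before the theorem this turns $\la\sP_{j,2m}^{\sE,\sE},\sP_{k,2m}^{\sE,\sE}\ra_\sW$ into $\la\sP_{j,m}(\sw_{-\f12,-\f12}),\sP_{k,m}(\sw_{-\f12,-\f12})\ra_{\sw_{-\f12,-\f12}}=\delta_{j,k}$. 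The identical computation for the other families differs only in the squared monomial prefactor --- $u^2v^2=st$ for $\sO,\sO$, $v^2=t$ for $\sE,\sO$, $u^2=s$ for $\sO,\sE$ --- which supplies exactly the extra $s^{\pm\f12}t^{\pm\f12}$ needed to pass from $\sw_{-\f12,-\f12}$ to the relevant $\sw_{\pm\f12,\pm\f12}$, while the scalar prefactors $\sqrt{\sb(\sw_{\pm\f12,\pm\f12})/\sb_\sW}$ are precisely what restores normalization.

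Running the same substitution against an arbitrary lower-degree polynomial promotes each family from polynomials of the correct degree to genuine elements of $\CV_n(\sW,\Omega)$: a lower-degree polynomial of the opposite parity integrates to zero against the symmetric weight automatically, and its part of matching parity is of the form $Q(u^2,v^2)$ (resp.\ $uv\,Q(u^2,v^2)$, etc.) with $\deg Q$ strictly below the degree of the relevant one-parameter OP, so the reduced integral vanishes by orthogonality on $\sqrt{\Omega}$. Completeness then follows by counting: the $\sE,\sE$ and $\sO,\sO$ families contribute $m+1$ and $m$ mutually orthogonal polynomials, totalling $2m+1=\dim\CV_{2m}(\sW,\Omega)$, while the two odd-degree families contribute $m+1$ each, totalling $2m+2=\dim\CV_{2m+1}(\sW,\Omega)$; since opposite-parity families are orthogonal, each collection is an orthonormal set of full dimension, hence a basis.

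I do not anticipate a conceptual obstacle, as the theorem is in essence a careful reindexing. The one point demanding attention is the arithmetic of the substitution: keeping the Jacobian $s^{-\f12}t^{-\f12}$, the squared monomial prefactors, and the four normalization constants in exact correspondence with the half-integer shifts defining $\sw_{\pm\f12,\pm\f12}$, and relying on the pure-parity decomposition of $\CV_n(\sW,\Omega)$ to guarantee that the four families together exhaust each subspace.
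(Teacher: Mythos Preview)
Your proposal is correct and follows essentially the same route as the paper: fold the integral over $\Omega$ to $\Omega_{+,+}$ by parity, apply the quadratic substitution $(s,t)=(u^2,v^2)$ to land on the $\sw_{\pm\f12,\pm\f12}$ inner products, and read off orthonormality; the paper carries this out explicitly only for the $\sE,\sO$ case and defers the orthogonality claim to \cite[Theorem~4.2]{X24}, whereas you also supply the lower-degree orthogonality check and the dimension count for completeness.
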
 

\begin{proof}
The orthogonality of these polynomials follows from parity and simple change of variables; see \cite[Theorem 4.2]{X24}. 
To see the polynomials are orthonormal, consider $\sP_{2m}^{\sE,\sO}(\sW; u,v)$ as an example. Let 
$\Omega_{+,+} = \{(u,v) \in \Omega: u \ge 0, v \ge 0\}$. By symmetry and changing variables $s = u^2$ and $t=v^2$, 
we obtain
\begin{align*}
   \Big \langle \sP_{2m+1}^{\sE,\sO}(\sW),  \sP_{2m+1}^{\sE,\sO}(\sW) \Big \rangle_{\sW}  
   &=  4 \sb \big(\sw_{-\f12, \f12}\big)\int_{\Omega_{+,+}} v^2 \Big | \sP_{j,m}\Big(\sw_{-\f12, \f12}; u^2,v^2\Big)\Big|^2 \sw\big(u^2,v^2\big) \d u \d v  \\
 & =  \sb\big(\sw_{-\f12, \f12}\big) \int_{\sqrt{\Omega}} s^{-\f12} t^{\f12} \Big | \sP_{j,m}\Big(\sw_{-\f12, \f12}; s,t\Big) \Big|^2 \sw(s,t)
   \d s \d t   \\  
  & =   \Big \langle  \sP_{j,m}\Big(\sw_{-\f12, \f12}\Big), \sP_{j,m}\Big(\sw_{-\f12, \f12})\Big \rangle_{\sw_{-\f12, \f12}} = 1.  
\end{align*}
A similar proof works for the norms of $\sP_{2m+1}^{\sO,\sE}(\sW)$ and $\sP_{2m+1}^{\sO,\sO}(\sW)$.
\end{proof} 

We are particularly interested in the subspace $\CV_n^{\circ, \sE}$ that consists of OPs even in the second variable in 
$\CV_n$, and the subspace $\CV_n^{\sE, \circ}$ that consists of OPs even in the first variable in $\CV_n$. By definition, 
$$
  \CV_{2m}^{\circ, \sE}(\sW, \Omega) = \CV_{2m}^{\sE,\sE} (\sW, \Omega) \quad\hbox{and}\quad
    \CV_{2m+1}^{\circ, \sE}(\sW, \Omega) = \CV_{2m+1}^{\sO,\sE} (\sW, \Omega),
$$
and
$$
  \CV_{2m}^{\sE,\circ}(\sW, \Omega) = \CV_{2m}^{\sE,\sE} (\sW, \Omega) \quad\hbox{and}\quad
    \CV_{2m+1}^{\sE,\circ}(\sW, \Omega) = \CV_{2m+1}^{\sE,\sO} (\sW, \Omega),
$$
In particular, it follows that
$$
  \dim \CV_n^{\circ, \sE} =    \dim \CV_n^{\sE, \circ}  = \left \lfloor \frac{n}2 \right \rfloor + 1.
$$
Because of symmetry, we shall consider mainly the space $\CV_{n}^{\circ, \sE}(\sW)$ in the following. 

Although the space $\CV_n^{\circ, \sE}$ consists of only half of the OPs in $\CV_n$, it is nevertheless sufficient 
for the Fourier orthogonal expansions and approximation on the domain 
\begin{equation} \label{eq:Lambda-gen}
   \Lambda := \Omega^{\circ, \sE} := \{(u,v) \in \Omega: v \ge 0\},
\end{equation}
which is the upper half of the fully symmetric domain $\Omega$, equipped with the weight function $\sW$.
Let $\sP_n^{\circ, \sE}(\sW; \cdot,\cdot)$ be the reproducing kernels of $\CV_n^{\circ, \sE}$. By definition,
$$
   \sP_n^{\circ, \sE}(\sW; \ub,\vb) = \sum_{j=0}^{ \left \lfloor \frac{n}2 \right \rfloor + 1} \sQ_{j,n}(\ub) \sQ_{j,n}(\vb),
$$
where $\sQ_{j,n}$ consists of an orthonormal basis of  $\CV_n^{\circ, \sE}$. Moreover, if $f$ is even in its 
second variable, then its orthogonal projection operator 
$$
\proj_n^{\circ, \sE} : L^2(\sW, \Lambda) \mapsto \CV_n^{\circ, \sE}
$$
satisfies the relation
$$
  \proj_n^{\circ, \sE}(\sW; f, \ub) = \sb_\sW \int_\Lambda f(\vb) \sP_n^{\circ, \sE}(\sW; \ub,\vb) \sW(\vb) \d \vb.
$$

\begin{thm} \label{thm:Fourier}
Let $\Omega$ and $\sW$ be fully symmetric. If $f \in L^2(\sW, \Lambda)$, then 
\begin{equation}\label{eq:Fourier}
       f = \sum_{n=0}^\infty  \proj_n^{\circ, \sE}(\sW; f). 
\end{equation}
\end{thm}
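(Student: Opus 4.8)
The plan is to reduce the claim to the full Fourier orthogonal expansion on $\Omega$, which is recalled above for regular $\sW$, by exploiting the reflection symmetry of $\Omega$ and $\sW$ in the second variable. Throughout I assume $\sW$ regular, so that $L^2(\sW,\Omega)=\bigoplus_{n\ge 0}\CV_n(\sW,\Omega)$.

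First I would pass from $f$ on $\Lambda$ to its even extension on $\Omega$. Given $f\in L^2(\sW,\Lambda)$, set $\wt f(u,v):=f(u,|v|)$ for $(u,v)\in\Omega$. Since $\Omega$ is fully symmetric and $\sW(u,-v)=\sW(u,v)$, the function $\wt f$ is even in $v$ and
\[
  \sb_\sW\int_\Omega |\wt f(\ub)|^2 \sW(\ub)\,\d\ub = 2\,\sb_\sW\int_\Lambda |f(\ub)|^2 \sW(\ub)\,\d\ub,
\]
so $\wt f\in L^2(\sW,\Omega)$ and even extension is, up to the factor $\sqrt2$, an isometry of $L^2(\sW,\Lambda)$ onto the closed subspace $L^2_\sE(\sW,\Omega)$ of functions even in the second variable. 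This is the step where the normalization constant must be tracked carefully, and it is where I expect the only genuine bookkeeping difficulty to lie: one must check that $\proj_n^{\circ,\sE}(\sW;f)$, defined by the $\Lambda$-integral against $\sP_n^{\circ,\sE}$, agrees with the honest orthogonal projection of $\wt f$ onto $\CV_n^{\circ,\sE}(\sW,\Omega)$, the two differing a priori only by the factor relating $\int_\Omega$ to $\int_\Lambda$ on even integrands.

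Next I would apply the full expansion $\wt f=\sum_n\proj_n(\sW;\wt f)$ in $L^2(\sW,\Omega)$. The decisive point is a parity argument. The reflection $v\mapsto -v$ preserves degrees and the inner product $\la\cdot,\cdot\ra_\sW$, hence maps each $\CV_n$ to itself, so $\CV_n$ splits into its even-in-$v$ and odd-in-$v$ parts, the even part being exactly $\CV_n^{\circ,\sE}$ by the definitions above. If $\sP\in\CV_n$ is odd in $v$, then $\wt f\,\sP\,\sW$ is odd in $v$ on the $v$-symmetric domain $\Omega$, whence $\la\wt f,\sP\ra_\sW=0$; thus every coefficient of $\wt f$ against an odd-in-$v$ basis element vanishes and $\proj_n(\sW;\wt f)\in\CV_n^{\circ,\sE}$. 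Folding the reproducing-kernel integral from $\Omega$ back to $\Lambda$ by evenness then identifies $\proj_n(\sW;\wt f)=\proj_n^{\circ,\sE}(\sW;f)$.

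Finally, I would transport the convergence back to $\Lambda$. The partial sums $\sum_{n\le N}\proj_n(\sW;\wt f)$ are even in $v$ and converge to $\wt f$ in $L^2(\sW,\Omega)$; restricting to $\Lambda$ and using that restriction is a scalar multiple of an isometry on $L^2_\sE(\sW,\Omega)$, the partial sums $\sum_{n\le N}\proj_n^{\circ,\sE}(\sW;f)$ converge to $f$ in $L^2(\sW,\Lambda)$, which is precisely \eqref{eq:Fourier}. The substantive content is entirely in the parity annihilation of the odd-in-$v$ components together with the normalization bookkeeping of the first step; the remainder is the standard $L^2$ convergence on $\Omega$.
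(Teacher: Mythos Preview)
Your proposal is correct and follows essentially the same approach as the paper: extend $f$ evenly to $\Omega$, invoke the full Fourier expansion there, and show by parity that $\proj_n(\sW;\wt f)=\proj_n^{\circ,\sE}(\sW;f)$, then restrict back to $\Lambda$. The paper packages the parity step via the kernel identity $\sP_n^{\circ,\sE}(\sW;\ub,\vb)=\tfrac12\bigl[\sP_n(\sW;\ub,\vb)+\sP_n(\sW;\ub,(v_1,-v_2))\bigr]$ rather than via basis elements, but this is the same content, and the normalization bookkeeping you flag is exactly the point where the paper's change of variables folds $\Lambda$ back into $\Omega$.
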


\begin{proof}
The kernel $\sP_n^{\circ, \sE}(\sW)$ can be written in terms of the reproducing kernel $\sP_n(\sW)$ of $\CV_n(\sW)$ as 
\begin{equation}\label{eq:PnBE1}
   \sP_n^{\circ, \sE}(\sW; \ub,\vb) = \frac12 \left[ \sP_n \big (\sW; (u_1,u_2), (v_1,v_2)\big) 
                + \sP_n \big(\sW; (u_1,u_2), (v_1, - v_2)\big) \right],
\end{equation}
which follows since the integral kernel in the right-hand side reproduces all polynomials in $\CV_n(\sW,\Omega)$ that 
are even in the second variable. For $f$ defined on $\Lambda$, we can extend it to the fully symmetric domain 
$\Omega$ by defining $f(u,-v) = f(u,v)$ for $(u,v) \in \Lambda$. Evidently, $f\in L^2(\Lambda, \sW)$ is equivalent to 
$f\in L^2(\sW,\Omega)$. For the extended $f$ on $\Omega$, the identity \eqref{eq:PnBE1} leads to the relation
\begin{align*}
   \proj_n^{\circ, \sE}(\sW; f, \ub) \, & = \f{\sb_\sW}{2} \int_\Lambda f(\vb)  \frac12 \left[ \sP_n \big (\sW; \ub, \vb\big) 
                + \sP_n \big(\sW; \ub, (v_1, - v_2)\big) \right] \sW(\vb) \d \vb  \\
           &  = \sb_\sW \int_\Omega f(\vb) \sP_n \big (\sW; \ub, \vb\big) \sW(\vb) \d \vb = \proj_n (\sW; f,\ub). 
\end{align*}
Hence, the stated identity is simply the restriction of the Fourier orthogonal expansion to $f$ that is even
in the second variable. 
\end{proof}

\begin{rem}
The space $L^2(\Lambda, \sW)$ contains a basis of OPs, so that it has the Fourier orthogonal expansion according to 
the decomposition 
$$
L^2(\sW,\Lambda) = \sum_{n=0}^\infty \CV_n(\sW,\Lambda), \qquad \dim \CV_n(\Lambda, \Omega) = n+1. 
$$
The orthogonal expansion stated in the theorem, however, is different. It is the restriction of the Fourier orthogonal 
expansion of $L^2(\sW,\Omega)$ on $\Lambda = \Omega^{\circ, \sE}$ for functions that are even in the second 
variable. The second one may look somewhat artificial, but it can be useful when an explicit basis for
$\CV_n^{\circ,\sE}(\sW,\Lambda)$ can be constructed, whereas such a basis is not available for $\CV_n(\sW,\Lambda)$, 
as we shall show in the next section. 
\end{rem}

 
\subsubsection{Fully symmetric orthogonal polynomials on the unit disk}
The unit disk $\BB^2$ and the semi-classical weight function $\sW^{\k}_\BB$ at \eqref{eq:WBk} are fully symmetric,
and the classical OPs on the triangle generate their OPS. Indeed, if $\Omega = \BB^2$, then $\sqrt{\Omega} = \triangle$ 
and, with $\sw(u,v) = \sW^\triangle_{\k_1,\k_2,\\k_3}(u,v)$, $\sw_{\pm \f12,\pm\f12}$ in \eqref{eq:w1212} becomes
$$
  \sw_{\pm \f12,\pm \f12} (u,v) = u^{\k_1\pm 
  \f 12} v^{\k_2 \pm \f12} (1-u-v)^{\k_3}.    
$$
In particular, by Theorem \ref{thm:FullSymB2}, an orthogonal basis for $\CV_n^{\circ, \sE}$ is given by
\begin{align}\label{eq:OP_B-T1}
\begin{split}
    \sP_{j,2m}^{\sE,\sE}(u,v)\, & = \sT_{j,m}^{\k_1-\f12, \k_2-\f12, \k_3} \big(u^2,v^2\big), \quad 0 \le j \le m, \\   
  \sP_{j,2m+1}^{\sO, \sE}(u,v)\, & = u \sT_{j,m}^{\k_1+\f12, \k_2-\f12, \k_3} \big(u^2,v^2\big), \quad 0 \le j \le m, 
\end{split}
\end{align}
and an orthogonal basis for $\CV_n^{\circ,\sO}$ is given by
\begin{align}\label{eq:OP_B-T2}
\begin{split}
    \sP_{j,2m}^{\sO,\sO}(u,v)\, & = uv \sT_{j,m}^{\k_1+\f12, \k_2+\f12, \k_3} \big(u^2,v^2\big), \quad 0 \le j \le m, \\    
    \sP_{j,2m+1}^{\sE,\sO}(u,v)\, & = v \sT_{j,m}^{\k_1-\f12, \k_2+\f12, \k_3} \big(u^2,v^2\big), \quad 0 \le j \le m.
\end{split}
\end{align}
Up to a multiple constant, this basis agrees with the basis $\sG_{j,n}^\k$ in \eqref{eq:OP_disk}. For better
reference, we state this formerly for the basis of $\CV_n^{\circ,\sE}(\sW_\k, \BB^2)$. 

\begin{prop}  \label{prop:OP_G}
Let $\sG_{j,n}^\k$ be defined in \eqref{eq:OP_disk}. Then, for $0 \le j \le m$, 
$$
  \sG_{2j,n}^\k (u,v) = \mathrm{cons.} \begin{cases}  \sT_{j,m}^{\k_1-\f12, \k_2-\f12, \k_3} \big(u^2,v^2\big), & n = 2m \\
   u \sT_{j,m}^{\k_1+\f12, \k_2-\f12, \k_3} \big(u^2,v^2\big), & n = 2m +1 .\end{cases} 
$$
Moreover, $\{ \sG_{2j,n}^\k (u,v): 0 \le j \le n/2\}$ is an orthogonal basis fo $\CV_n^{\circ,\sE}(\sW_\k, \BB^2)$.
\end{prop}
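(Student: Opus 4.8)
The plan is to reduce the whole statement to the single–variable quadratic transform relating the generalized Gegenbauer polynomials to Jacobi polynomials. Splitting the weight $|t|^{2\mu}(1-t^2)^{\l-\f12}$ according to the parity of the polynomial and pushing it forward under $t\mapsto 2t^2-1$ onto a Jacobi weight on $[-1,1]$ gives
\begin{align*}
  C_{2k}^{(\l,\mu)}(t) &= \mathrm{cons.}\; P_k^{(\l-\f12,\,\mu-\f12)}\!\big(2t^2-1\big),\\
  C_{2k+1}^{(\l,\mu)}(t) &= \mathrm{cons.}\; t\,P_k^{(\l-\f12,\,\mu+\f12)}\!\big(2t^2-1\big),
\end{align*}
with nonzero constants whose exact values are irrelevant here. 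I would either cite this as standard or record it as a one–line lemma.

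The first, and decisive, point is a parity count: a member $\sG^\k_{i,n}$ of the basis \eqref{eq:OP_disk} is even in the second variable precisely when the $v$–factor $C_{n-i}^{(i+\k_1+\k_3+1,\k_2)}(v)$ has even degree, i.e. when $i\equiv n\pmod 2$; thus the relevant polynomials are $\sG^\k_{2j,n}$ for $n=2m$ and $\sG^\k_{2j+1,n}$ for $n=2m+1$. For such an index the outer factor has even degree and transforms into a constant multiple of $P_{m-j}^{(\,\cdot\,,\,\k_2-\f12)}\!\big(2v^2-1\big)$, while the inner factor $C_{i}^{(\k_3+\f12,\k_1)}\!\big(u/\sqrt{1-v^2}\big)$, after absorbing the accompanying power of $(1-v^2)$, becomes a constant multiple of $(1-v^2)^{j}P_j^{(\k_3,\,\k_1\mp\f12)}\!\big(2u^2/(1-v^2)-1\big)$, together with an extra linear factor $u$ in the odd case $i=2j+1$. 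Comparing the resulting product with the triangle polynomial \eqref{eq:triOP} evaluated at $(u^2,v^2)$ and matching the Jacobi parameters $(\a_3,\a_1)$, the parameter $\a_2$, and the first index $2j+\a_1+\a_3+1$ pins down $\a=(\k_1\mp\f12,\,\k_2-\f12,\,\k_3)$ and reproduces the displayed identities.

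For the basis assertion I would argue structurally rather than recompute. By \eqref{eq:OP_disk} the family $\{\sG^\k_{i,n}:0\le i\le n\}$ is an orthogonal basis of $\CV_n(\sW^\k_\BB,\BB^2)$, and since the weight $\sW^\k_\BB$ at \eqref{eq:WBk} is invariant under the coordinate reflections, this space splits orthogonally according to parity in the second variable, the even part having dimension $\lfloor n/2\rfloor+1$. The polynomials selected above all lie in that even part, they are mutually orthogonal as a subset of an orthogonal basis, and there are exactly $\lfloor n/2\rfloor+1$ of them, so they span it. By \thmref{thm:FullSymB2} this even part is $\CV_n^{\circ,\sE}(\sW^\k_\BB,\BB^2)$ with the explicit basis recorded in \eqref{eq:OP_B-T1}, which completes the identification.

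The step I expect to be the main obstacle is exactly the parity bookkeeping flagged above: one must keep straight which of the two Gegenbauer factors carries the surviving linear factor, whether the inner Jacobi parameter is shifted by $+\f12$ or $-\f12$, and — most easily overlooked — that for odd degree the polynomials even in the second variable carry an \emph{odd} first index $2j+1$ rather than $2j$. A minor secondary point is checking that each ``$\mathrm{cons.}$'' is genuinely nonzero, so that no basis element degenerates, and that it remains compatible with the limiting convention \eqref{eq:limit} when some $\k_i$ lies at the boundary of its admissible range.
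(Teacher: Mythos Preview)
Your approach is essentially the same as the paper's: both reduce the identity to the single–variable quadratic transform \eqref{eq:gGegen} from generalized Gegenbauer polynomials to Jacobi polynomials, combined with the definitions \eqref{eq:triOP} and \eqref{eq:OP_disk}. The paper's proof says little more than that; your version simply spells out the parity bookkeeping and the basis count more carefully.

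In doing so you have in fact caught an indexing slip in the statement as printed. For $n=2m+1$ the polynomial $\sG_{2j,n}^\k$ has outer factor $C_{2m+1-2j}^{(\cdot,\k_2)}(v)$ of odd degree and inner factor $C_{2j}^{(\cdot,\k_1)}$ of even degree, hence is of parity type $(\sE,\sO)$; it therefore cannot equal $u\,\sT_{j,m}^{\k_1+\f12,\k_2-\f12,\k_3}(u^2,v^2)$, which is of type $(\sO,\sE)$ and lies in $\CV_{2m+1}^{\circ,\sE}$. The right–hand side in the odd case matches $\sG_{2j+1,n}^\k$ instead, exactly as you flag; a direct computation with \eqref{eq:gGegen} confirms both Jacobi parameters and the first index $2j+\k_1+\k_3+\tfrac32$ on the nose. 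The paper's one–line proof does not address (or notice) this point. Your argument is correct for the corrected indexing, and the structural basis argument you give is sound.
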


\begin{proof}
The generalized Gegenbauer polynomials $C_n^{(\l,\mu)}$ are given explicitly in terms of the Jacobi polynomials 
\cite[Section 1.5.2]{DX}  
\begin{align} \label{eq:gGegen}
\begin{split}
C_{2m}^{(\l,\mu)}(t) &\, = \frac{(\l+\mu)_m}{(\mu+\f12)_m} P_m^{(\l-\f12,\mu-\f12)}(2 t^2 -1),\\
C_{2m+1}^{(\l,\mu)}(t) &\, = \frac{(\l+\mu)_{m+1}}{(\mu+\f12)_{m+1}} t P_m^{(\l-\f12,\mu+\f12)}(2 t^2 -1),
\end{split}
\end{align}
which allows us to deduce the stated identity from \eqref{eq:triOP} and \eqref{eq:OP_disk}. 
\end{proof}

The identification of the two bases can also be used to show that the closed-form formula \eqref{eq:reprodT} of the
reproducing kernels on the triangle can be deduced from \eqref{eq:reprodB} for the reproducing kernels on the disk. 
Moreover, the spectral relation \eqref{eq:eigen-T} can be deduced from the relation \eqref{eq:diffB2}. 

We end this section by specializing the two properties for fully symmetric OPs to those in $\CV_n^{\sE,\circ}$ 
and state them as a proposition for later reference. 

\begin{prop}
Let $\k = (0, \k_2 ,\k_3)$, so that $\sW_\k (u,v) = |v|^{2\k_2} \left(1-u^2-v^2\right)^{\k_3}$. Then the OPs in the space 
$\CV_n^{\circ,\sE}(\sW_\k, \BB^2)$ satisfy \eqref{eq:eigenB2} for the differential operator
\begin{align}\label{eq:diffB2E}
  \CD_\k^\BB =  (1-u^2)\partial_{uu}  &\, - 2 u v \partial_{uv} + (1-v^2)\partial_{vv} - (2 |\k| + 3) (u \partial_u + v \partial_v) 
  +  \frac {2 \k_2 } v \partial_v.
\end{align}
Moreover, the reproducing kernel $\sP_n^{\sE,\circ}(\sW_\k; \cdot,\cdot)$ of $\CV_n^{\sE,\circ}(\sW_\k, \BB^2)$ satisfies 
\begin{align} \label{eq:reprodBE}
\sP_n(\sW_\k^\BB; \ub,\vb) = &\, c_\k \int_{[-1,1]^2} Z_n^{|\k|+1} \left(u_1 v_1+ u_2 v_2 t_2  + 
  \sqrt{1-|\ub|^2} \sqrt{1-|\vb|^2}\, t_3 \right) \\
 &\qquad\qquad\qquad\qquad  \times (1-t_2^2)^{\k_2-1} (1-t_3^2)^{\k_3-\f12}\d t. \notag
\end{align}
\end{prop}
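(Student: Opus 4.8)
The plan is to derive both assertions by specializing, at the parameter value $\k_1 = 0$, the two properties already established for the full space $\CV_n(\sW^\k_\BB,\BB^2)$: the spectral relation \eqref{eq:eigenB2} for the operator \eqref{eq:diffB2}, and the closed form \eqref{eq:reprodB} of the reproducing kernel. The even-in-$v$ symmetry of the subspace is what allows the difference part of the operator to disappear, and the extra linear factor in the $t_1$-weight is what collapses one of the integrations in the kernel.

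For the operator, I would start from $\CD^\k_\BB$ in \eqref{eq:diffB2}, which by \eqref{eq:eigenB2} has every $Y\in\CV_n(\sW^\k_\BB,\BB^2)$ as an eigenfunction for $-n(n+2|\k|+2)$. Putting $\k_1=0$ kills the entire bracket $\k_1(\f{2}{u}\partial_u-\f{1-\s_1}{u^2})$. Then I would restrict to $Y\in\CV_n^{\circ,\sE}$, i.e. polynomials even in the second variable, so that $\s_2 Y=Y$ and hence $\f{1-\s_2}{v^2}Y=0$; the only surviving contribution of the $\k_2$-bracket is $\f{2\k_2}{v}\partial_v$. Since $\f1v\partial_v$ sends even-in-$v$ polynomials to polynomials, the reduced operator is the genuine differential operator \eqref{eq:diffB2E}, it agrees with $\CD^\k_\BB$ on this subspace, and the eigenvalue relation \eqref{eq:eigenB2} is inherited verbatim.

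For the kernel I would start from \eqref{eq:reprodB} and let $\k_1\to 0^+$. Only the $t_1$-integration is affected: it is taken against the normalized weight $c_{\k_1}(1+t_1)(1-t_1^2)^{\k_1-1}$, which is the measure of \eqref{eq:limit} with $\a=\k_1-\f12$ carrying the extra factor $(1+t_1)$. Absorbing $(1+t_1)$ into the test function and applying \eqref{eq:limit} as $\a\to-\f12^+$, the value at $t_1=-1$ is suppressed by the vanishing of $1+t_1$ there while the value at $t_1=1$ is doubled, so the limiting $t_1$-measure is $\delta_1$; the integral collapses to evaluation at $t_1=1$, replacing $u_1v_1t_1$ by $u_1v_1$ in the argument of $Z_n^{|\k|+1}$ and removing the $t_1$-weight. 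This yields exactly \eqref{eq:reprodBE}. If one wants the kernel of the even subspace rather than that of the full $\CV_n(\sW_\k,\BB^2)$, one symmetrizes in the first variable as in \eqref{eq:PnBE1}, picking out the even part.

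I expect the kernel limit to be the main obstacle. Unlike \eqref{eq:limit}, whose output is the symmetric average $\f12(f(1)+f(-1))$, here the asymmetric factor $(1+t_1)$ must be shown to concentrate the limiting measure entirely at $t_1=1$. The clean way to see this is to note that $\int_{-1}^1 t_1(1-t_1^2)^{\k_1-1}\d t_1=0$, so $(1+t_1)$ does not change the total mass and the normalizing constant $c_{\k_1}$ of \eqref{eq:reprodB} coincides with the $c_\a$ of \eqref{eq:limit}; identifying the $t_1$-weight as a probability measure converging weakly to $\delta_1$ then lets one pass to the limit under the remaining $t_2,t_3$ integrations.
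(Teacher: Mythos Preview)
Your treatment of the operator is correct and matches the paper's own argument: setting $\k_1=0$ removes the first bracket of \eqref{eq:diffB2}, and evenness in $v$ kills $(1-\s_2)Y$, leaving \eqref{eq:diffB2E}.

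The kernel argument has a genuine gap. Collapsing the $t_1$-integration by sending $\k_1\to 0^+$ is done correctly (your observation that the factor $(1+t_1)$ tilts the limit measure of \eqref{eq:limit} entirely onto $t_1=1$ is right), but the result is \emph{not} \eqref{eq:reprodBE}: what you obtain is
\[
c_\k \int_{[-1,1]^2} Z_n^{|\k|+1}\!\left(u_1v_1+u_2v_2 t_2+\sqrt{1-|\ub|^2}\sqrt{1-|\vb|^2}\,t_3\right)(1+t_2)(1-t_2^2)^{\k_2-1}(1-t_3^2)^{\k_3-\f12}\,\d t,
\]
which still carries the factor $(1+t_2)$. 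That is the full kernel $\sP_n(\sW_\k^\BB;\ub,\vb)$ at $\k_1=0$, not the even-subspace kernel. The formula \eqref{eq:reprodBE} as stated has \emph{no} $(1+t_2)$ factor, and this is precisely what identifies it as the kernel of $\CV_n^{\circ,\sE}$ rather than of $\CV_n$.

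So the symmetrization via \eqref{eq:PnBE1} is not an optional add-on, it is the step that produces \eqref{eq:reprodBE}. And it is a symmetrization in the \emph{second} variable ($v_2\mapsto -v_2$), not the first: in the second summand of \eqref{eq:PnBE1} one substitutes $t_2\mapsto -t_2$ to restore the argument of $Z_n^{|\k|+1}$, which turns $(1+t_2)$ into $(1-t_2)$; averaging the two summands then replaces $(1+t_2)$ by $1$ and gives exactly \eqref{eq:reprodBE}. This is how the paper does it, and your outline needs this correction to close.
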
 

Indeed, since $Y \in \CV_n^{\circ.\sE}(\sW_\k, \BB^2)$ is even in its second variable, $(1- \s_2) Y = 0$, so that the
difference part of $\CD_\k^\BB$ for $\k_2$ in \eqref{eq:diffB2} becomes zero, and the operator becomes \eqref{eq:diffB2E}. 
 Moreover, the identity \eqref{eq:reprodB} for the reproducing kernel holds under limit $\k_1 \to 0$, and 
 \eqref{eq:reprodBE} follows from \eqref{eq:PnBE1} by changing variable $t_2 \to - t_2$ in the integral representation 
 for the second term on the right-hand of \eqref{eq:PnBE1}. A similar statement can be given for 
 $\CV_n^{\sE,\circ}(\sW_\k, \BB^2)$ if $\k = (\k_1,0,\k_3)$. 
 
It is worth to emphasis that the operator $\CD_\k^\BB$ in \eqref{eq:diffB2E} is a differnetial operator, for which 
$\CV_n^{\circ,\sE}(\sW_\k, \BB^2)$ is its space of eigenfunctions, but $\CV_n(\sW_\k, \BB^2)$ is not. 
  
\section{Approximation and OPs on fully symmetric domains}
\setcounter{equation}{0}
 
Throughout the rest of the paper, we let $\fa, \fb, \fc$ be real numbers satisfying 
$$
0 \le \fa < \fb \quad\hbox{and}\quad \fc \ge 0.
$$ 
 
\subsection{OPs in two variables}
We denote by $\triangle_{\fa,\fb,\fc}$ the triangle that has vertices 
$$
(0,\fa), \quad (0,\fb), \quad (1,\fc) \quad \hbox{with} \quad  0 \le \fa < \fb, \quad \fc \ge 0,
$$ 
which is in the first quadrant of the plane and given algebraically as
$$
\triangle_{\fa,\fb,\fc} = \{(u,v): \fa + (\fc-\fa) u \le v \le  \fb + (\fc-\fb) u, \quad 0 \le u \le 1\}.
$$
For $\fa > 0$, the triangles are depicted in Figure 1. The triangle is affine transformed to the triangle $\triangle$ by 
\begin{equation} \label{eq:tri-tri}
    (u,v) \in \triangle_{\fa,\fb,\fc} \mapsto (s,t) \in \triangle: \quad s = u, \quad t = \frac{-\fa+(\fa-\fc)u+v}{\fb-\fa}.  
\end{equation}

\begin{figure}[htb]
\begin{minipage}{1\textwidth}
\centering
\includegraphics[width=4cm]{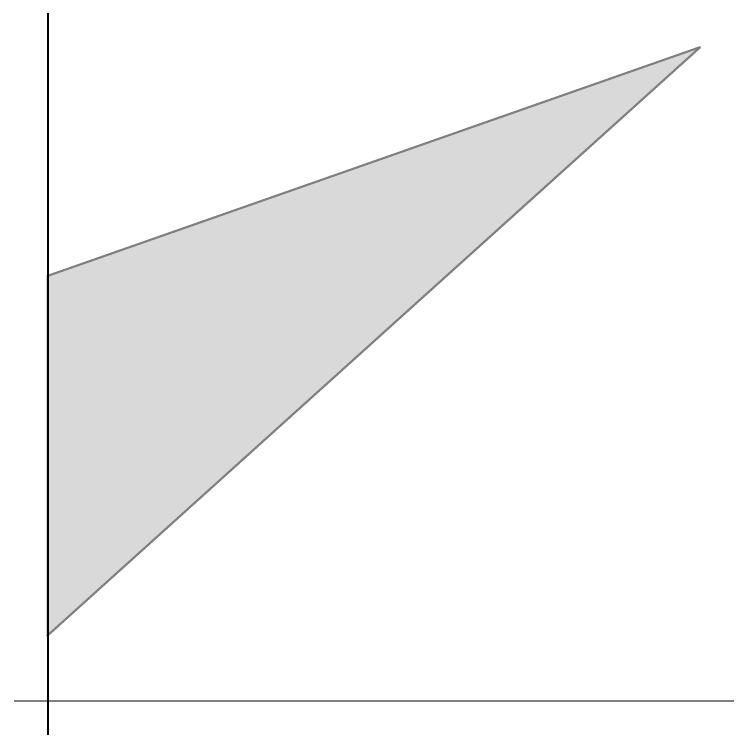}  \quad 
 \includegraphics[width=4cm]{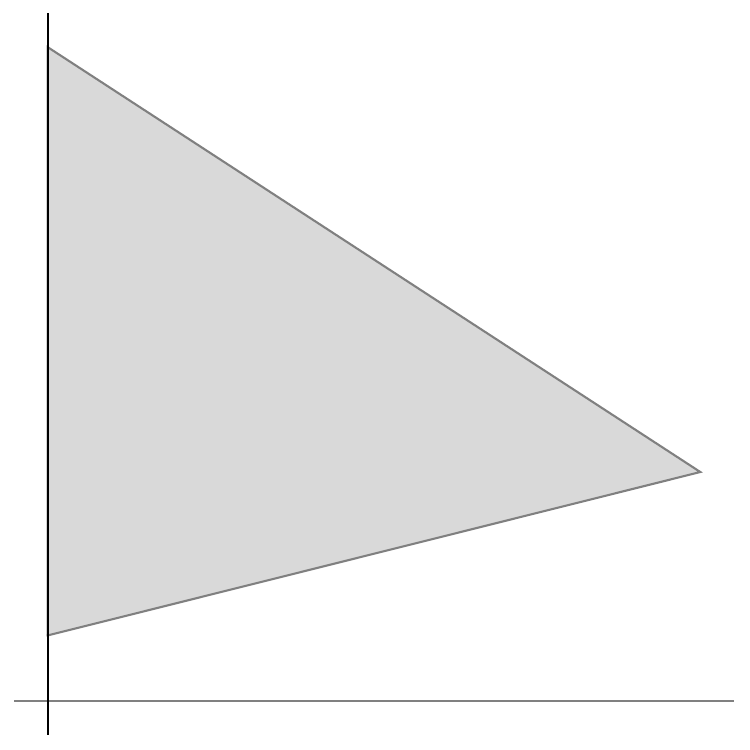} \quad
 \includegraphics[width=4cm]{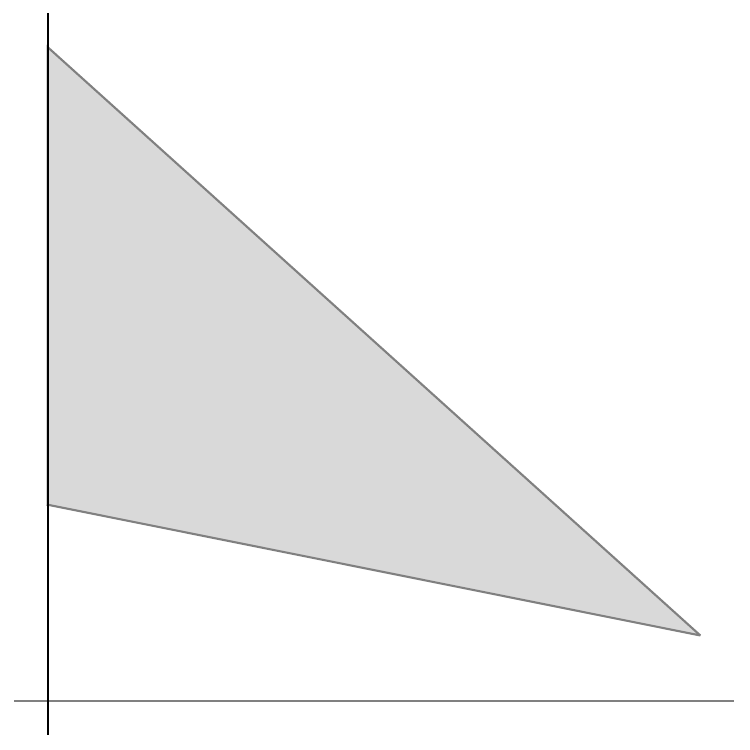}
\end{minipage}
\caption{Left $\fa< \fb< \fc$ \qquad Center $\fa < \fc < \fb$ \qquad Right $\fc < \fa < \fb$ \qquad}
\label{Fig1}
\end{figure}
 
Let $ \sW_\k^\triangle$ be the classical Jacobi weight on the triangle $\triangle$. Denote 
$$
     z(u,v) = \frac{-\fa+(\fa-\fc)u+v}{\fb-\fa}.
$$
On the triangle $\triangle_{\fa,\fb,\fc}$, define the weight function 
\begin{align}\label{eq:w_abc}
  \sw_{\fa,\fb,\fc}^\k (u,v)  &\, = \sW_\k^\triangle\left(u,z(u,v) \right) \\
       &\, =u^{\k_1} \left(\frac{-\fa+(\fa-\fc)u+v}{\fb-\fa}\right)^{\k_2} 
       \left(\frac{\fb-(\fb-\fc)u-v}{\fb-\fa}\right)^{\k_3}.\notag
\end{align}
Since the affine transform preserves orthogonality, it follows readily that an orthogonal basis for 
$\CV_n( \triangle_{\fa,\fb,\fc}, \sw_{\fa,\fb,\fc}^\k)$ is given by 
\begin{equation} \label{eq:OPabc}
  \sP_{j,n}(u,v) = \sT_{j,n}^\k \left(u,z(u,v) \right), \quad 0 \le j \le n,
\end{equation}
which are affine transformations of the standard Jacobi basis on the triangle. 

The fully symmetric domain and weight functions that we are interested in arise from $\triangle_{\fa,\fb,\fc}$ and 
$\sw_{\fa,\fb,\fc}^\k$, and are defined formly below. 

\begin{defn}\label{defn:Oabc}
For $0 \le \fa < \fb$, $\fc \ge 0$, define the fully symmetric domain $\Omega_{\fa,\fb,\fc}$ by 
$$
   \Omega_{\fa,\fb,\fc} = \left \{(u,v): \fa + (\fc-\fa) u^2 \le v^2 \le  \fb + (\fc-\fb) u^2, \quad  |u| \le 1\right \},
$$
and define the weight function $\sW_{\fa,\fb,\fc}^\k$, $\k_1,\k_2 > -\f12$ and $\k_3 > -1$, on $\Omega_{\fa,\fb,\fc}$ by 
\begin{align} \label{eq:sWabcB2}
  \sW_{\fa,\fb,\fc}^\k(u,v) \,& = |v| \sw_{\fa, \fb,\fc}(u^2,v^2) \\
     \, & = |v| |u|^{2 \k_1}  
      \left(\frac{-\fa+(\fa-\fc)u^2+v^2}{\fb-\fa}\right)^{\k_2-\f12} \left(\frac{\fb-(\fb-\fc)u^2-v^2}{\fb-\fa}\right)^{\k_3}. \notag
\end{align}
\end{defn}

The domain $\Omega_{\fa,\fb,\fc}$ is fully symmetric. What we mainly concentrate on is $\Omega_{\fa,\fb,\fc}^{\circ, \sE}$,
which is the portion in the upper half plane as defined in \eqref{eq:Lambda-gen}, and we denote it by
\begin{equation*}
   \Lambda_{\fa,\fb,\fc} := \Omega_{\fa,\fb,\fc}^{\circ, \sE} = \left\{(u,v) \in \Omega_{\fa,\fb,\fc}: v \ge 0 \right \}.
\end{equation*}
For $\fa > 0$, the domains corresponding to those in Figure 1 are depicted in Figure 2. 
\begin{figure}[htb]
\begin{minipage}{1\textwidth}
\centering
\includegraphics[width=4cm]{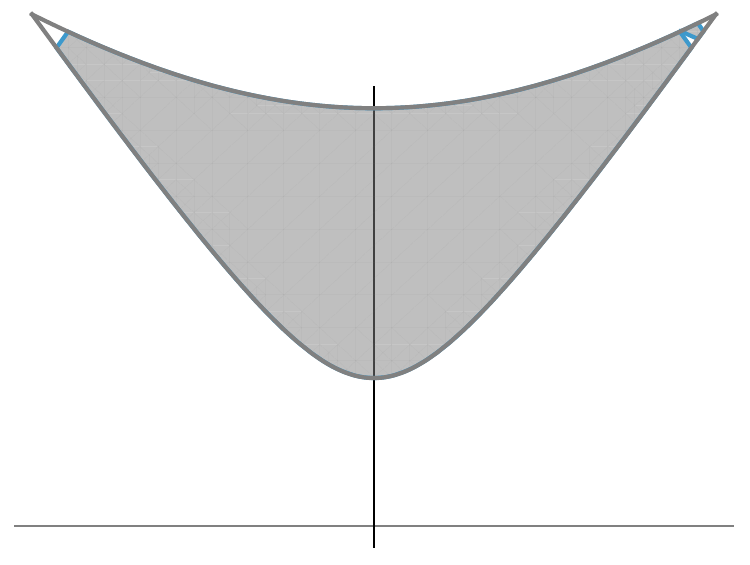}  \quad 
 \includegraphics[width=4cm]{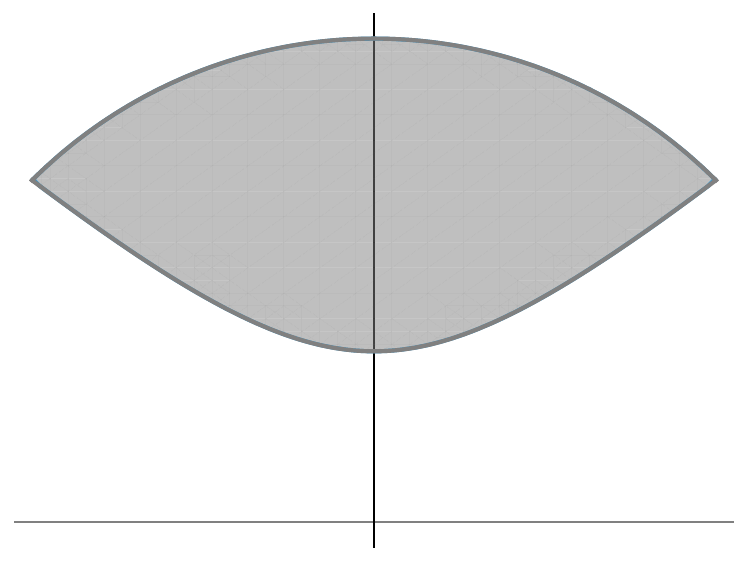} \quad
 \includegraphics[width=4cm]{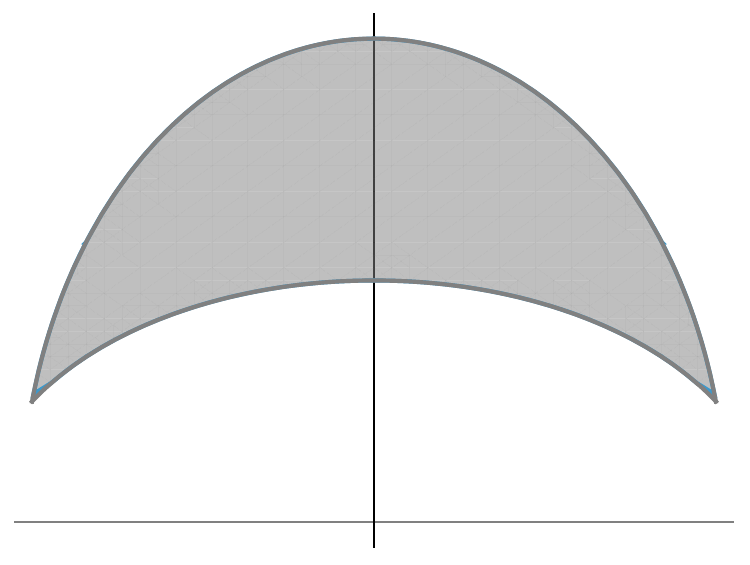}
\end{minipage}
\caption{Left $\fa< \fb< \fc$ \qquad Center $\fa < \fc < \fb$ \qquad Right $\fc < \fa < \fb$ \qquad}
\label{Fig2}
\end{figure}
If $\fc = \fa$, then the lower curve in the last two figures becomes a horizontal line segment, whereas if $\fc = \fb$, 
then the upper curve becomes a horizontal line segment. 

For $\fa =0$, the domain $\Lambda_{0,\fb,\fc}$ corresponding to the third triangle in Figure 1 is the upper half 
of the ellipsoid when $\fc =0$, which becomes the upper disk if $\fb=1$ as well. The first two cases coincide 
when $\fc = \fb$, for which $\Lambda_{0,\fb,\fc} = \{(u,v): |u| \le v \le 1\}$ is a triangle, whereas the two cases 
have different characteristics, when $\fc < \fb$ or $\fc > \fb$, and they are depicted in Figure 3. 
\begin{figure}[htb]
\begin{minipage}{1\textwidth}
\centering
\includegraphics[width=6cm]{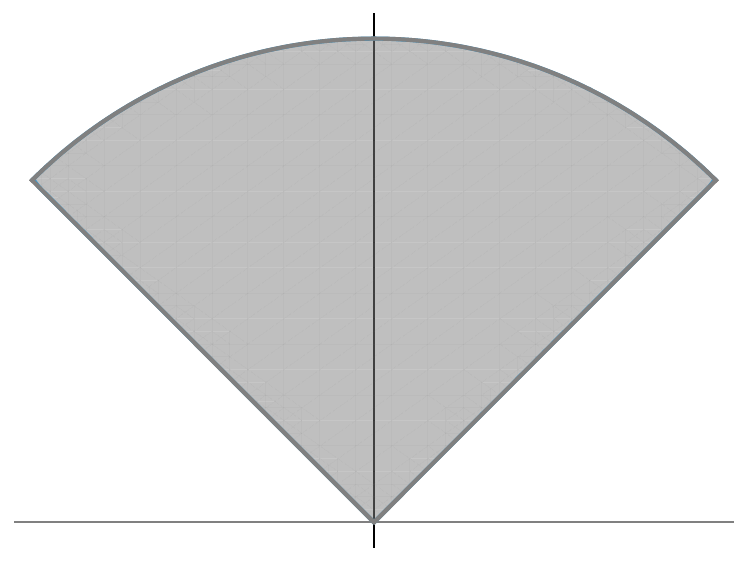}  \qquad 
 \includegraphics[width=6cm]{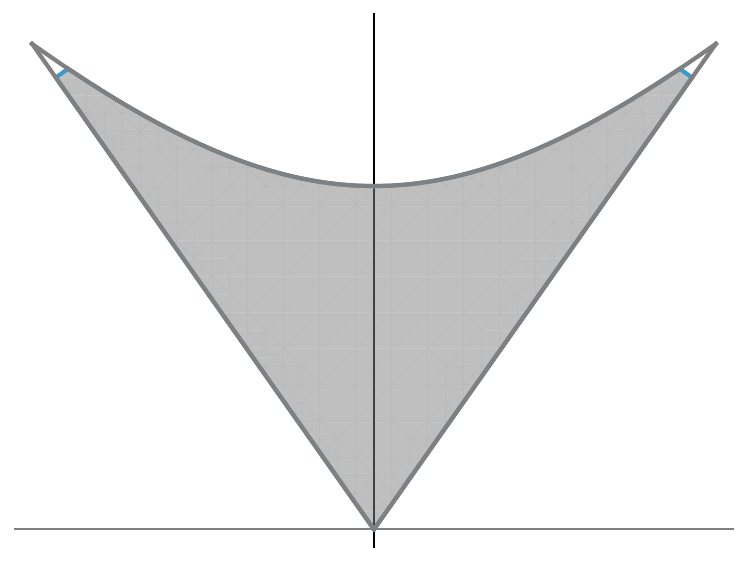}  
\end{minipage}
\caption{Left: $\fb=1,\, \fc = \f12$ \qquad\qquad\quad  Right: $\fb=1,\, \fc = 2$ \qquad}
\label{Fig3}
\end{figure}

The weight function $\sW_{\fa,\fb,\fc}^\k$ is fully symmetric. In the case of $\fa =\fc = 0$ and $\fb=1$, the 
domain $\Omega_{0,1, 0}$ becomes the unit disk and the weight function satisfies $\sW_{0,1,0}^\k(u,v) = \sW_\BB^\k(u,v)$. 
This explains our choice of the parameter $\k_2 -\f12$ instead of $\k_2$ in the definition. 
In terms of the notation of the previous section, we have $\sqrt{\Omega_{\fa,\fb,\fc}} = \triangle_{\fa,\fb,\fc}$. The 
corresponding weight function $\sw$ defined by \eqref{eq:W=w} becomes
\begin{equation} \label{eq:sWabc}
    \sW_{\fa,\fb,\fc}^\k(u,v) = \sw(u^2,v^2) \quad\hbox{with} \quad \sw(u,v)  = v^{\f12} \sw_{\fa,\fb,\fc}^{\k_1,\k_2-\f12, \k_3} (u,v),  
\end{equation}
By Theorem \ref{thm:FullSymB2}, OPs for $\CV_n(\sW_{\fa,\fb,\fc}^\k, \Omega_{\fa,\fb,\fc}^\k)$ can be given 
in terms of OPs with respect to $\sw_{\pm \f12, \pm\f12}$ on the triangle, but not all in terms of classical OPs 
on the triangle. Indeed, 
\begin{align*}
  \sw_{\pm \f12, - \f12}(u,v) \,& = \sW_\triangle^{\k_1 \pm \f12, \k_2-\f12, \k_3}\left(u, z(u,v) \right)
\end{align*}
are classical Jacobi weights on $\triangle$ under \eqref{eq:tri-tri} but $\sw_{\pm \f12, \f12}$ are not unless $\fa = \fc = 0$
because of the factor $v^\f12$. In the latter case, the basis for $v^{\f12} \sw_{\fa,\fb,\fc}^{\k_1,\k_2-\f12, \k_3}$ 
on the triangle can still be written in terms of the Jacobi polynomials (see, e.g. \cite{OTV}), but they are no 
longer the eigenfunctions of a second-order differential operator, nor do they satisfy an addition formula.  

The OPs for $\sW_{\fa,\fb,\fc}^\k$ generated by OPs for the classical Jacobi weight consist of an orthogonal basis 
for the space $\CV_n^{\circ, \sE}(\sW_{\fa,\fb,\fc}^\k, \Omega_{\fa,\fb,\fc}^\k)$. They are given by 
$\sP_{j,2m}^{\sE,\sE}$ for $n = 2m$ and $\sP_{j, 2m+1}^{\sO,\sE}$ for $n = 2m+1$ under the change of variables 
$(u,v) \mapsto (s,t)$ in \eqref{eq:tri-tri}. Hence, by \eqref{eq:OP_B-T1} and \eqref{eq:OP_B-T2} and 
Proposition \ref{prop:OP_G}, an explicit orthogonal basis for $\CV_n^{\circ, \sE}(\sW_{\fa,\fb,\fc}^\k, \Omega_{\fa,\fb,\fc}^\k)$
can be given in terms of semi-classical OPs on the unit disk. 

The connection between our fully symmetric domain and the unit disk is the central piece of our observation. 
To further clarify the connection, we provide a bijection between the upper half of $ \Omega_{\fa,\fb,\fc}$, denoted by
$$
    \Lambda_{\fa,\fb,\fc} := \left \{(u,v) \in \Omega_{\fa,\fb,\fc}: v \ge 0\right\},
$$
and the upper half of the unit disk $\BB_+^2: = \{(s,t) \in \BB^2: t \ge 0\}$. Let 
\begin{equation} \label{eq:tuv}
  \ft(u,v): = z(u^2,v^2) = \sqrt{\frac{-\fa+(\fa-\fc)u^2+v^2}{\fb-\fa}}. 
\end{equation}

\begin{lem} \label{lem:st-uv}
A bijection bewteen $\Lambda_{\fa,\fb,\fc}$ and $\BB_+^2$ is given by 
\begin{equation}\label{st-uv}
\psi: (u,v) \in  \Lambda_{\fa,\fb,\fc,} \mapsto (s,t) \in \BB_+^2, \qquad \psi (u,v) = (u, \ft (u,v)) = (s,t),
\end{equation}
which leads to the integral identity 
$$
   \int_{\Lambda_{\fa,\fb,\fc}} f(u,v) \sW_{\fa,\fb,\fc}^\k (u,v) \d u \d v = 
       \int_{\BB_+^2} (f\circ\psi^{-1})(s,t) \sW_\BB^\k(s,t) \d s \d t. 
$$
\end{lem}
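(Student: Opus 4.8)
\emph{Proof proposal.} The plan is to derive both parts from the affine correspondence \eqref{eq:tri-tri} between $\triangle_{\fa,\fb,\fc}$ and $\triangle$, followed by a single change of variables; informally, $\psi$ is the ``square root'' of that affine map. For the bijection, I would note that the defining inequalities of $\Omega_{\fa,\fb,\fc}$ become those of $\triangle_{\fa,\fb,\fc}$ under $(u,v)\mapsto(u^2,v^2)$, so $(u,v)\in\Lambda_{\fa,\fb,\fc}$ iff $v\ge0$ and $(u^2,v^2)\in\triangle_{\fa,\fb,\fc}$. Then \eqref{eq:tri-tri} gives $t^2=\ft(u,v)^2=z(u^2,v^2)$ and $s^2=u^2$, so $(s^2,t^2)=(u^2,z(u^2,v^2))\in\triangle$; together with $t\ge0$ this says $(s,t)\in\BB_+^2$. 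Conversely, inverting the affine map \eqref{eq:tri-tri} (which carries $\triangle$ back into the first-quadrant triangle $\triangle_{\fa,\fb,\fc}$) and solving $z(u^2,v^2)=t^2$ for $v\ge0$ yields the explicit inverse
\begin{equation*}
  \psi^{-1}(s,t)=\Big(s,\ \sqrt{\fa+(\fc-\fa)s^2+(\fb-\fa)t^2}\Big),
\end{equation*}
the radicand being nonnegative for the same reason. Since $s=u$ keeps the sign of the first variable while $v\mapsto v^2$ is injective on $[0,\infty)$, the two maps are mutually inverse, so $\psi$ is a bijection.

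For the integral identity I would change variables by $\psi$. Because $s=u$, the Jacobian matrix of $\psi^{-1}$ is lower triangular, and differentiating $v^2=\fa+(\fc-\fa)s^2+(\fb-\fa)t^2$ gives $\partial v/\partial t=(\fb-\fa)t/v$, whence $\d u\,\d v=\tfrac{(\fb-\fa)t}{v}\,\d s\,\d t$. On the other hand, substituting $u^2=s^2$, $z(u^2,v^2)=t^2$, and $\tfrac{\fb-(\fb-\fc)u^2-v^2}{\fb-\fa}=1-s^2-t^2$ into the definition \eqref{eq:sWabcB2} turns the weight into
\begin{equation*}
  \sW_{\fa,\fb,\fc}^\k(u,v)=|v|\,|s|^{2\k_1}\,t^{2\k_2-1}(1-s^2-t^2)^{\k_3}=\frac{v}{t}\,\sW_\BB^\k(s,t)
\end{equation*}
on $\BB_+^2$. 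Multiplying the last two displays, the factor $v$ cancels and $t^{2\k_2-1}\cdot t=t^{2\k_2}$, so that $\sW_{\fa,\fb,\fc}^\k(u,v)\,\d u\,\d v=(\fb-\fa)\,\sW_\BB^\k(s,t)\,\d s\,\d t$; relabeling $f(u,v)=(f\circ\psi^{-1})(s,t)$ then gives the asserted identity. The Jacobian contributes the overall constant $\fb-\fa$, which is absorbed on passing to the normalized inner products $\la\cdot,\cdot\ra_\sW$; in particular it equals $1$ in the case $\fa=0$, $\fb=1$ that specializes to the disk.

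The computation itself is short, so I do not expect a genuine obstacle; the one point demanding care is the bookkeeping of the half-integer exponent. The factor $t^{2\k_2-1}$ from the weight must combine with the single power of $t$ from the Jacobian to reproduce the disk weight's $t^{2\k_2}$, while the factor $|v|$ in $\sW_{\fa,\fb,\fc}^\k$ must cancel the $v$ in the denominator of the Jacobian. This is precisely the role of the extra $v^{\f12}$ built into $\sw$ in \eqref{eq:sWabc}: it is what makes the change of variables land exactly on $\sW_\BB^\k$ rather than on some other weight, so recognizing this factor as the load-bearing feature is the crux of the argument.
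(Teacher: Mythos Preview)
Your proposal is correct and follows essentially the same route as the paper: verify the bijection, compute the Jacobian to obtain $v\,\d u\,\d v=(\fb-\fa)\,t\,\d s\,\d t$, and check that the weight $\sW_{\fa,\fb,\fc}^\k$ transforms into $\sW_\BB^\k$. You are in fact more careful than the paper on one point: the overall factor $\fb-\fa$ from the Jacobian, which the paper's short proof records in the differential relation but then silently drops in the displayed weight identity; your remark that this constant is absorbed upon passing to the normalized inner products (and equals $1$ in the disk case $\fa=0$, $\fb=1$) is the right way to reconcile the stated integral identity with the computation.
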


\begin{proof}
It is straightforward to verify that the mapping is a bijection. Computing the Jacobian of the change of variables 
$(u,v) \mapsto (s,t)$, it is easy to see that
$$
v\, \d u \d v = (\fb - \fa)t\, \d s \d t, 
$$
which leads to the identity 
$$
      \sW_{\fa,\fb,\fc}^{\k} (u,v) \d u \d v = s^{2\k_1} t^{2\k_2} (1-s^2-t^2)^{\k_2} \d s \d t= \sW_\BB^\k (s,t) \d s \d t,
$$
so that the integral identity follows from a change of variables. 
\end{proof}

\begin{thm}\label{prop:sQjn}
Let $\sG_{j,n}^\k$ be the semi-classical orthogonal polynomials on the disk given in \eqref{eq:OP_disk}. 
Let $\sQ_{j,n}^\k$ be polynomials defined by 
\begin{align} \label{sQ-sG}
   \sQ_{j,n}^\k(u,v)\, &  = C_{n-2j}^{(2j + \k_2+\k_3 +1,\k_1)}(u) (1-u^2)^{j} 
   P_j^{(\k_3, \k_2-\f12)}\bigg(2 \frac{-\fa+(\fa-\fc)u^2+v^2}{(\fb-\fa)(1-u^2)} -1\bigg) \notag  \\
   \,& = (\sG_{2j,n}^\k \circ \psi) (u,v).
\end{align}
Then $\left\{ \sQ_{j, n}^\k, \,\, 0  \le j \le \left\lfloor  \f{n}{2} \right\rfloor \right\}$ is an 
orthogonal basis for $\CV_n^{\circ, \sE}(\sW_{\fa,\fb,\fc}^\k, \Omega_{\fa,\fb,\fc})$.
\end{thm}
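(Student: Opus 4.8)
The plan is to establish the two assertions separately: first that the composition $\sG_{2j,n}^\k\circ\psi$ really equals the explicit polynomial displayed in \eqref{sQ-sG}, and in particular is a genuine polynomial of degree $n$ lying in $\CV_n^{\circ,\sE}$; and then that the resulting family is orthogonal and of the right cardinality to be a basis.

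For the first part I would substitute the explicit form \eqref{eq:OP_disk} of $\sG_{2j,n}^\k$ together with the map $\psi(u,v)=(u,\ft(u,v))$ from Lemma~\ref{lem:st-uv}, so that the variable carrying the square root is replaced by $\ft(u,v)$. The crucial observation is that the relevant even-indexed generalized Gegenbauer factor depends on that variable only through its square; converting it to a Jacobi polynomial by the quadratic relation \eqref{eq:gGegen} turns it into a polynomial in $\ft(u,v)^2$, and by \eqref{eq:tuv} we have $\ft(u,v)^2=\bigl(-\fa+(\fa-\fc)u^2+v^2\bigr)/(\fb-\fa)$, a polynomial in $(u^2,v^2)$. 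Thus the square root in $\psi$ disappears, the companion factor $(1-u^2)^j$ clears the denominator produced by the Jacobi argument, and one is left with exactly the expression in \eqref{sQ-sG}. A degree count then gives total degree $(n-2j)+2j=n$, and the evenness in $v$ is manifest, so $\sQ_{j,n}^\k\in\CV_n^{\circ,\sE}$. Equivalently, one may route this through Proposition~\ref{prop:OP_G}, writing $\sG_{2j,n}^\k$ as a power of $u$ times a triangle polynomial $\sT_{j,m}$ in the squared variables and then applying the affine correspondence \eqref{eq:tri-tri}.

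For the orthogonality I would exploit the measure-preserving bijection of Lemma~\ref{lem:st-uv}. Since $\sQ_{j,n}^\k\circ\psi^{-1}=\sG_{2j,n}^\k$, the integral identity there gives $\int_{\Lambda_{\fa,\fb,\fc}}\sQ_{i,n}^\k\,\sQ_{j,n'}^\k\,\sW_{\fa,\fb,\fc}^\k=\int_{\BB_+^2}\sG_{2i,n}^\k\,\sG_{2j,n'}^\k\,\sW_\BB^\k$. Because both disk factors are even in the second variable, the integrand is even there, so the half-disk integral equals one half of the integral over the full disk $\BB^2$; the latter vanishes unless $(i,n)=(j,n')$ by the orthogonality asserted in Proposition~\ref{prop:OP_G}. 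This yields both mutual orthogonality within a fixed degree and orthogonality across degrees.

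Finally the basis property follows by counting: the range $0\le j\le\lfloor n/2\rfloor$ produces exactly $\lfloor n/2\rfloor+1=\dim\CV_n^{\circ,\sE}$ polynomials, each of degree $n$ and even in $v$ and none of them zero, hence linearly independent by orthogonality. Taken over all degrees, the $\sQ_{i,n'}^\k$ with $n'\le n$ number $\sum_{n'\le n}(\lfloor n'/2\rfloor+1)$, which is precisely the dimension of the space of polynomials of degree $\le n$ that are even in $v$; being mutually orthogonal they form a basis of that space, and the degree-$n$ members, orthogonal to all lower-degree ones, must span $\CV_n^{\circ,\sE}$. The step I expect to be the main obstacle is the first one: checking that the composition genuinely collapses to a polynomial and matching it term-by-term with \eqref{sQ-sG} through \eqref{eq:gGegen}, while keeping the parameter shifts and the reduction of the half-disk integral to the full disk correctly aligned.
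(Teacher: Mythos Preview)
Your proposal is correct and follows essentially the same route as the paper's own (very brief) justification: the paper simply remarks that the orthogonality follows immediately from the integral identity in Lemma~\ref{lem:st-uv}, and alternatively from the fully symmetric triangle picture discussed just before that lemma. You spell out both of these paths in more detail, including the use of \eqref{eq:gGegen} to see that the square root in $\psi$ disappears for the even-indexed $\sG_{2j,n}^\k$, the half-disk to full-disk reduction via evenness in the second variable, and the dimension count --- all of which the paper leaves implicit.
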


By symmetry, these OPs are also a basis for $\CV_n^{\circ, \sE}(\sW_{\fa,\fb,\fc}^\k, \Lambda_{\fa,\fb,\fc})$. 
The orthogonality follows immediately from the integral identity in the Lemma \ref{lem:st-uv}, and it also follows
from the orthogonality for the fully symmetric setting via the classical OPs on the triangle, as we discussed 
before the lemma. 

Using this explicit basis, we can deduce that OPs in the space $\CV_n^{\circ, \sE}(\sW_{\fa,\fb,\fc}^\k, \Omega_{\fa,\fb,\fc})$
are eigenfunctions of a second-order differential operator when $\k_1 =0$. 

\begin{thm}\label{thm:PDE_d=2}
For $\k_1 =0$, the polynomials in $\CV_n^{\circ, \sE}(\sW_{\fa,\fb,\fc}^\k, \Omega_{\fa,\fb,\fc})$ 
are eigenfunctions of a differential operator, 
\begin{equation} \label{eq:spectral}
    \fD_{\fa,\fb,\fc}^\k Y = -n(n+ 2\k_2+2\k_3+ 2) Y, \qquad Y \in \CV_n^{\circ, \sE}(\sW_{\fa,\fb,\fc}^\k, \Omega_{\fa,\fb,\fc}),
\end{equation} 
and the operator is given explicitly by, for $(u,v) \in \Omega_{\fa,\fb,\fc}$, 
\begin{align*}
 \fD_{\fa,\fb,\fc}^\k \, & = (1-u^2) \partial_{uu} - 2(-\fc+v^2) \frac{u}{v} \partial_{u v} + \left[ \fa + \fb - v^2
       + \frac{- \fa \fb + (\fa - \fc) (\fb - \fc) u^2}{v^2}\right]\partial_{vv}\\
  & + \left[ -2\fa + \fc -\frac{- \fa \fb + (\fa - \fc) (\fb - \fc) u^2}{v^2} + \right]\frac{1}{v}\partial_{v} 
    + (2|\k|+3) \left[ u \partial_u + (v^2-\fa)\frac{1}{v}\partial_{v}\right]\\
  &  + 2 \k_2 (\fb-\fa) \frac{1}{v} \partial_v. 
\end{align*}
Moreover, the operator is related to the second-order differential operator $\CD_{\BB}^\k$ by 
\begin{equation} \label{eq:fD=D}
        \fD_{\fa,\fb,\fc}^\k (f\circ \psi) = \left(\CD_\BB^\k f \right) \circ \psi.   
\end{equation}
\end{thm}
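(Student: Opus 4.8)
The plan is to obtain the eigenvalue equation \eqref{eq:spectral} as an immediate consequence of the conjugation identity \eqref{eq:fD=D}, so the bulk of the work is to verify \eqref{eq:fD=D}. Assuming it for the moment: by \thmref{prop:sQjn} the functions $\sQ_{j,n}^\k = \sG_{2j,n}^\k\circ\psi$ form a basis of $\CV_n^{\circ,\sE}(\sW_{\fa,\fb,\fc}^\k,\Omega_{\fa,\fb,\fc})$, and by Proposition \ref{prop:OP_G} each $\sG_{2j,n}^\k$ lies in $\CV_n^{\circ,\sE}(\sW_\k,\BB^2)$ with $\k_1 = 0$. As shown in Section~2 (see \eqref{eq:diffB2E}--\eqref{eq:eigenB2}), these are eigenfunctions of $\CD_\BB^\k$ with eigenvalue $-n(n+2|\k|+2)$, which equals $-n(n+2\k_2+2\k_3+2)$ since $\k_1 = 0$. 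Applying \eqref{eq:fD=D} to $f = \sG_{2j,n}^\k$ then gives
\[
   \fD_{\fa,\fb,\fc}^\k \sQ_{j,n}^\k = \big(\CD_\BB^\k \sG_{2j,n}^\k\big)\circ\psi = -n(n+2\k_2+2\k_3+2)\,\sQ_{j,n}^\k,
\]
and, since $\fD_{\fa,\fb,\fc}^\k$ is linear and the $\sQ_{j,n}^\k$ span the space, \eqref{eq:spectral} follows for every $Y$.

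To prove \eqref{eq:fD=D} I would compute the pullback of $\CD_\BB^\k$ under $\psi$ and check that it agrees with the stated formula for $\fD_{\fa,\fb,\fc}^\k$. Writing $s = u$ and $t = \ft(u,v)$ and differentiating the defining relation $(\fb-\fa)t^2 = -\fa + (\fa-\fc)u^2 + v^2$, one gets
\[
   t_u = \frac{(\fa-\fc)u}{(\fb-\fa)t}, \qquad t_v = \frac{v}{(\fb-\fa)t},
\]
and a second differentiation yields $t_{uu}, t_{uv}, t_{vv}$. For $g = f\circ\psi$ the chain rule gives $g_u = f_s + t_u f_t$, $g_v = t_v f_t$, and
\[
   g_{uu} = f_{ss} + 2 t_u f_{st} + t_u^2 f_{tt} + t_{uu} f_t, \quad g_{uv} = t_v f_{st} + t_u t_v f_{tt} + t_{uv} f_t, \quad g_{vv} = t_v^2 f_{tt} + t_{vv} f_t.
\]

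Substituting these into $\fD_{\fa,\fb,\fc}^\k g$ and collecting the coefficients of $f_{ss}, f_{st}, f_{tt}, f_s, f_t$, I would verify that they reproduce the coefficients of $\CD_\BB^\k$ evaluated at $(s,t) = (u, \ft(u,v))$, namely $1-s^2$, $-2st$, $1-t^2$, $-(2|\k|+3)s$ and $-(2|\k|+3)t + \tfrac{2\k_2}{t}$, respectively. The second-order coefficients are the easy part: the $f_{ss}$ coefficient is $1-u^2 = 1-s^2$ outright, while for $f_{st}$ and $f_{tt}$ the cross terms collapse after inserting $t_u, t_v$ and repeatedly using $(\fb-\fa)t^2 = -\fa + (\fa-\fc)u^2 + v^2$ to eliminate $(\fa-\fc)u^2 + v^2$. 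The main obstacle is the first-order $f_t$ coefficient: it gathers the three contributions $t_{uu}, t_{uv}, t_{vv}$ (through $g_{uu}, g_{uv}, g_{vv}$) together with the genuine first-order terms of $\fD_{\fa,\fb,\fc}^\k$, and checking that this expression simplifies to $-(2|\k|+3)t + \tfrac{2\k_2}{t}$ is the bookkeeping-heavy step. In particular, the term $2\k_2(\fb-\fa)\tfrac1v\partial_v$ of $\fD_{\fa,\fb,\fc}^\k$ is exactly what is required to produce the Dunkl-type term $\tfrac{2\k_2}{t}\partial_t$ of $\CD_\BB^\k$ under the map, reflecting that on the $\k_1 = 0$ even subspace the reflection part of $\CD_\BB^\k$ vanishes and only a genuine differential operator survives.
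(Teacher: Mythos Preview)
Your proposal is correct and follows essentially the same strategy as the paper: deduce \eqref{eq:spectral} from the conjugation identity \eqref{eq:fD=D} via $\sQ_{j,n}^\k = \sG_{2j,n}^\k\circ\psi$, and establish \eqref{eq:fD=D} by a chain-rule computation relating derivatives of $f\circ\psi$ to those of $f$. The only organizational difference is that the paper inverts the chain rule---writing $\partial_1 f,\partial_2 f,\partial_1^2 f,\ldots$ at $(u,\ft(u,v))$ in terms of $\partial_u F,\partial_v F,\ldots$---and substitutes into $\CD_\BB^\k$ to \emph{derive} the formula for $\fD_{\fa,\fb,\fc}^\k$, whereas you substitute the forward chain rule into the already-stated $\fD_{\fa,\fb,\fc}^\k$ and \emph{verify} that the coefficients of $f_{ss},f_{st},f_{tt},f_s,f_t$ match those of $\CD_\BB^\k$; these are the same calculation run in opposite directions.
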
 
 
\begin{proof}
Our goal is to find an operator $\fD_{\fa,\fb,\fc}^\k$ that satisfies \eqref{eq:fD=D}. 
Indeed, by \eqref{sQ-sG}, such an operator would imply 
\begin{align*}
  \fD_{\fa,\fb,\fc}^\k \sQ_{j,n}^\k \, & =  \fD_{\fa,\fb,\fc}^\k (\sG_{2j,n}^\k \circ \psi) 
       =  \left(\CD_\BB^\k \sG_{2j,n}^\k \right) \circ \psi \\ 
      &  = -n(n+2|\k|+2) \sG_{2j,n}^\k \circ \psi =  -n(n+2|\k|+2) \sQ_{j,n}^\k,  
\end{align*}
where the second identity uses \eqref{eq:fD=D} and the third one follows from \eqref{eq:eigenB2},
which would have verified \eqref{eq:spectral}. 

Let $F(u,v) = (f\circ \psi)(u,v) = f(u, \ft(u,v))$. Denote by $\partial_1f$ and $\partial_2f$ the derivative of 
$f$ for the first and the second variable. Taking the derivative of $\partial_u F$ and $\partial_v F$
by chain rule and solve for $\partial_1 f$ and $\partial_2 f$ accordingly, it is straightforward to verify that
\begin{align*}
  \partial_2 f(u,\ft(u,v)) \, &= (\fb - \fa) \ft(u,v) \frac{1}{v} \partial_v F(u,v), \\
  \partial_1 f(u, \ft(u,v)) \, &= \partial_u F(u,v) - (\fa - \fc) \frac{u}{v} \partial_v F(u,v).
\end{align*}
Moreover, taking the derivatives one more time and solving for the second-order derivatives of $f$, we
obtain 
\begin{align*}
  \partial_2^2 f(u, t(u,v)) = \, &  (\fb-\fa)^2 \f{\ft(u,v)^2}{v^2} \partial_u^2 F(u,v) - (\fb - \fa)\big(-\fa+ (\fa - \fc)v^2\big)
   \frac{u}{v} \partial_v F(u,v), \\
  \partial_1 \partial_2 f(u, \ft(u,v))  = \, & (\fb - \fa) \frac{\ft(u,v)}{v} \partial_u \partial_v F(u,v)
         + (\fa - \fc)^2  \frac{u^2}{v^2} \partial^2_v F(u,v)\\
             \, &  - (\fa - \fc)(\fb-\fa)  \frac{u \ft(u,v)}{v^3} \partial_v F(u,v), \\
    \partial_1^2 f(u, \ft(u,v)) = \, & \partial_u^2 F(u,v) -2 (\fa - \fc) \frac{u}{v} \partial_u \partial_v F(u,v) 
        + (\fa-\fc)^2  \frac{u^2}{v^2} \partial^2_v F(u,v) \\
     & +  (\fa - \fc) \big((\fa - \fc) u^2 + v^2\big) \frac{1}{v^3} \partial_v F(u,v).       
\end{align*}
Putting these together and substituting them into $\CD_\BB^\k$ that has the coefficients, in front of the differentials,
in the variable $(s,t) = (u,t(u,v)))$, we obtain the stated expression for $\fD_{\fa,\fb,\fc}^\k$ after simplification by
\eqref{eq:fD=D}.  
\end{proof}

In the case $\fa = 0, \, \fb=1, \,\fc=0$, the operator $\fD_{0,1,0}^\k$ is exactly $\CD_\BB^\k$ with $\k_1=0$,
which is the second-order differential operator with polynomial coefficients. For all other cases, the 
coefficients of the operator $\fD_{\fa,\fb,\fc}^\k$ are not polynomials but rational functions with the power
of $v$ in the denominators. 

Along the same line, we can also state an addition formula for $\sP_n^{\circ,\sE}(\sW_{\fa,\fb,\fc}^\k)$, with $\k_1=0$,
by making a change of variable $u_2 \to \ft(u_1,u_2)$ and $v_2 \to \ft(v_1,v_2)$ in \eqref{eq:reprodBE} since
$$
\sP_n^{\circ,\sE}\left(\sW_{\fa,\fb,\fc}^\k,\; (\ub,\vb)\right) = \sP_n\left(\sW_\BB^\k; \big(u_1, \ft(u_1,u_2)\big),
   \big(v_1, \ft(v_1,v_2)\big) \right). 
$$
 
\subsection{Approximation on the curved domain}
Based on the discussion in the previous section and Theorem \ref{thm:Fourier}, much of the analysis 
of the Fourier orthogonal expansion and approximation by polynomials on the domain $\Lambda_{\fa,\fb,\fc}$
can be deduced from the corresponding results on the unit disk. We state a couple of results on approximation
by polynomials as examples. 

Since $\k_1 = 0$ for OPs on $\Lambda_{\fa,\fb,\fc}$, we replace $\k$ by $(\b,\g)$ for $\b > -\f12$ and $\g > -1$ in 
this subsection. Let $\Pi_n^{\circ,\sE}$ be the space of polynomials of degree at most $n$, in two variables, 
that are even in the second variable. Then 
$$
   \Pi_n^{\circ,\sE} = \bigoplus_{k=0}^n \CV_k(\sW_{\fa, \fb, \fc}^{\b,\g},\Lambda_{\fa, \fb, \fc}). 
$$
The Fourier orthogonal series \eqref{eq:Fourier} holds if the $n$-th partial sum 
$$
  S_n^{\circ,\sE} (\sW_{\fa, \fb, \fc}^{\b,\g}; f) = \sum_{k=0}^n \proj_k^{\circ,\sE} (\sW_{\fa, \fb, \fc}^{\b,\g}; f)
$$
converges to $f$ in $L^2(\sW_{\fa, \fb, \fc}^{\b,\g},\Lambda_{\fa, \fb, \fc})$ and it is the best approximation to 
$f$ from $\Pi_n^{\circ,\sE}$ in the norm of $L^2(\sW_{\fa, \fb, \fc}^{\b,\g},\Lambda_{\fa, \fb, \fc})$. 

For $f \in L^p(\sW_{\fa, \fb, \fc}^{\b,\g},\Lambda_{\fa, \fb, \fc})$, $1 \le p < \infty$ and $f \in C(\Lambda_{\fa, \fb,\fc})$
if $p = \infty$, we define 
$$
   E_n(f)_{L^p\big(\sW_{\fa, \fb, \fc}^{\b,\g},\Lambda_{\fa, \fb, \fc}\big)} 
         = \inf_{g \in\Pi_n^{\circ,\sE}} \|f - g\|_{L^p\big(\sW_{\fa, \fb, \fc}^{\b,\g},\Lambda_{\fa, \fb, \fc}\big)},
$$
where the space becomes $C(\Lambda_{\fa,\fb,\fc})$ if $p = \infty$. This is the error of best approximation by polynomials 
and it can be characterized via the K-functional defined by, for $r \in \NN$ and $\rho > 0$,  
$$
\sK_r(f; \rho)_{p,\sW_{\fa,\fb,\fc}^{\b,\g}} = \inf_{g} \left\{ \|f - g\|_{L^p\big(\sW_{\fa, \fb, \fc}^{\b,\g},\Lambda_{\fa, \fb, \fc}\big)} + 
  \rho^r \left \|\big (\fD_{\fa,\fb,\fc}^{\b,\g}\big )^\f r 2 g \right\|_{L^p\big(\sW_{\fa, \fb, \fc}^{\b,\g},\Lambda_{\fa, \fb, \fc}\big)} \right \}, 
$$
where the infimum is taken over $g \in C^r(\Lambda_{\fa,\fb,\fc})$, the space of functions that have $r$-th continuous 
derivatives, and the fractional operator of $\fD_{\fa,\fb,\fc}^{\b,\g}$ is defined by
$$
  (\fD_{\fa,\fb,\fc}^{\b,\g})^{\f r 2} \proj_n^{\circ, \sE}(\sW_{\fa,\fb,\fc}^{\b,\g}; f) = -n (n+2 \b+2\g+2)^{\f12} 
      \proj_n^{\circ, \sE}(\sW_{\fa,\fb,\fc}^{\b,\g}; f)
$$
for all $n =1,2,\ldots$ by using the relation \eqref{eq:spectral}. 

\begin{thm}\label{main-thmV0}
Let $\b \ge 0$, $\g \ge - \f12$, and $f \in L^p(\sW_{\fa,\fb,\fc}, \Lambda_{\fa,\fb,\fc})$ if $1 \le p < \infty$, 
and $f \in C(\Lambda_{\fa,\fb,\fc})$ if $p = \infty$. Then, for $r \in \NN$ and $n =1,2,\ldots$, there hold 
\begin{enumerate} [\rm (i)]
\item {\it direct theorem}:
$$
    E_n(f)_{L^p\big(\sW_{\fa, \fb, \fc}^{\b,\g},\Lambda_{\fa, \fb, \fc}\big)} 
        \le c \, K_r(f;  n^{-1})_{p,\sW_{\fa, \fb, \fc}^{\b,\g}} ;
$$
\item {\it inverse theorem}:  
$$
   K_r(f; n^{-1})_{p,\sW_{\fa, \fb, \fc}^{\b,\g}} 
         \le c \,n^{-r} \sum_{k=0}^n (k+1)^{r-1} E_k(f)_{L^p\big(\sW_{\fa, \fb, \fc}^{\b,\g},\Lambda_{\fa, \fb, \fc}\big)}.
$$
\end{enumerate}
\end{thm}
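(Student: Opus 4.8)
The plan is to transfer the entire approximation problem from the curved domain $\Lambda_{\fa,\fb,\fc}$ to the upper half-disk $\BB_+^2$ through the bijection $\psi$ of Lemma~\ref{lem:st-uv}, and then read off (i) and (ii) from the corresponding direct and inverse theorems for the semi-classical weight $\sW_\BB^\k$ with $\k=(0,\b,\g)$ on the disk, which are already available from the localized-kernel machinery in the stated ranges $\b\ge 0$, $\g\ge-\f12$ (the parameter window in which the kernel formula \eqref{eq:reprodBE} is valid). Everything rests on showing that the three quantities entering the statement---the $L^p$ norm, the best-approximation error $E_n$, and the K-functional $\sK_r$---are each preserved under the composition $f\mapsto f\circ\psi^{-1}$.

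First I would record the norm identity. Applying the integral identity of Lemma~\ref{lem:st-uv} to $|f|^p$ in place of $f$ gives
$$
   \|f\|_{L^p(\sW_{\fa,\fb,\fc}^{\b,\g},\Lambda_{\fa,\fb,\fc})}
   = \|f\circ\psi^{-1}\|_{L^p(\sW_\BB^{\k},\BB_+^2)}, \qquad 1\le p\le\infty,
$$
so $\psi$ is an $L^p$-isometry. Next I would check that $g\mapsto g\circ\psi$ is a degree-preserving linear bijection from $\Pi_n^{\circ,\sE}$ on $\BB^2$ onto $\Pi_n^{\circ,\sE}$ on $\Omega_{\fa,\fb,\fc}$: a polynomial even in $t$ is a polynomial in $(s,t^2)$, and under $\psi$ one has $s=u$ and $t^2=\ft(u,v)^2=\frac{-\fa+(\fa-\fc)u^2+v^2}{\fb-\fa}$, an affine substitution in the squared variables that is invertible and preserves total degree, since $t^2$ maps to a degree-$2$ polynomial in $(u,v)$, matching the degree of $t^2$ in $(s,t)$. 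Combining the isometry with this bijection yields $E_n(f)_{L^p(\sW_{\fa,\fb,\fc}^{\b,\g},\Lambda_{\fa,\fb,\fc})}=E_n(f\circ\psi^{-1})_{L^p(\sW_\BB^\k,\BB_+^2)}$.

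For the K-functional I would argue at the spectral level, where the transfer is robust for all $p$. Because the reproducing kernels correspond under $\psi$ (the addition-formula identity stated after Theorem~\ref{thm:PDE_d=2}), the projection operators intertwine,
$$
  \proj_n^{\circ,\sE}(\sW_{\fa,\fb,\fc}^{\b,\g};f)=\proj_n^{\circ,\sE}(\sW_\BB^{\k};f\circ\psi^{-1})\circ\psi,
$$
and by the operator relation \eqref{eq:fD=D} together with the matching eigenvalues $-n(n+2\b+2\g+2)$ in \eqref{eq:spectral}, the spectrally-defined fractional powers satisfy $(\fD_{\fa,\fb,\fc}^{\b,\g})^{r/2}(g\circ\psi)=\big((\CD_\BB^{\k})^{r/2}g\big)\circ\psi$. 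Applying the isometry to both terms in the definition of $\sK_r$ then gives $\sK_r(f;\rho)_{p,\sW_{\fa,\fb,\fc}^{\b,\g}}=\sK_r(f\circ\psi^{-1};\rho)_{p,\sW_\BB^{\k}}$. With all three quantities identified, (i) and (ii) follow by invoking the known direct and inverse estimates on $\BB_+^2$ and transporting them back.

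The main obstacle I anticipate is the K-functional step, namely reconciling the $C^r$-smoothness requirement in its definition with the fact that $\psi$ is not polynomial and, through the square root in $\ft$, degenerates as $v\to 0$ when $\fa=0$. The resolution is to lean on the spectral, projection-based definition of $(\fD_{\fa,\fb,\fc}^{\b,\g})^{r/2}$ rather than on pointwise derivatives, so that the intertwining of projections and the equality of eigenvalues carry the argument and the delicate regularity of $\psi$ is never confronted directly; one must only verify that $\psi$ carries the relevant $C^r$ class on $\BB_+^2$ into a class on which the operator identity \eqref{eq:fD=D} and its $r$-fold iterate remain valid.
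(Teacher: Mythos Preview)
Your proposal is correct and follows essentially the same route as the paper: transfer everything to the disk via the bijection $\psi$ of Lemma~\ref{lem:st-uv}, verify that the $L^p$ norm, the best-approximation error $E_n$, and the K-functional all correspond under $f\mapsto f\circ\psi^{-1}$, and then invoke the known direct and inverse theorems on $\BB^2$ (restricted to functions even in the second variable, equivalently on $\BB_+^2$). The paper handles the K-functional transfer by applying the operator identity \eqref{eq:fD=D} directly, whereas you route it through the intertwining of projections and matching eigenvalues; these amount to the same thing, and your spectral formulation has the advantage of sidestepping the pointwise regularity concern about $\psi$ that you rightly flag.
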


\begin{proof}
This follows from the corresponding result on the unit ball established in \cite[Theorem 4.3]{X05}, for which the 
stated characterization holds for $E_n(f)_{L^p(\sW_\BB^\k, \BB^2)}$ with the corresponding K-functional defined via
the differential operator $\CD_\BB^\k$. For example, the direct theorem on $\BB^2$ is given by
$$
  E_n(f)_{L^p(\sW_\BB^\k,\BB^2)}:= \inf_{g \in\Pi_n^2} \|f - g\|_{L^p(\sW_\BB^\k,\BB^2)},
   \le c K_r(f; n^{-1})_{p, \sW_\BB^\k}
$$
where the $K$-functional on the disk is defined by 
$$
   K_r(f; \rho)_{p, \sW_\BB^\k} = \inf_{g\in C^r(\BB)} \left \{ \|f-g\|_{L^p(\sW_\BB^\k,\BB^2)} + 
       \rho^r \left \| (\CD_\BB^\k)^\frac r 2  g\right\|_{L^p(\sW_\BB^\k,\BB^2)} \right\}
$$
If $f$ on $\BB^2$ is even in its second variable, then we can choose $g$ to be even in its second variable in both 
of the above definitions, which shows, in particular, that we can replace $\BB^2$ by $\BB_+^2$ and $\Pi_n^2$ by
$\Pi_n^{\circ, \sE}$. Moreover, if $g$ is a polynomial that is even in its second variable, then $g\circ \psi^{-1}$ has 
the same symmetry since 
\begin{equation*} 
       \psi^{-1}(u,v) = \left(s, \sqrt{(\fb-\fa)t^2 + \fa(1-s^2) + \fc s^2}\right). 
\end{equation*}
Consequently, by Lemma \ref{lem:st-uv}, we conclude that 
$$
  E_n(f)_{L^p\big(\sW_{\fa, \fb, \fc}^{\b,\g},\Lambda_{\fa, \fb, \fc}\big)} 
     = E_n\left( f\circ \psi^{-1} \right)_{L^p\big(\sW_\BB^{\b,\g},\BB^2\big)}.
$$
Furthermore, applying \eqref{eq:fD=D} on $f\circ \psi^{-1}$, we obtain 
$\fD_{\fa,\fb,\fc}^\k g = \left(\CD_\BB^\k g\circ \psi^{-1} \right) \circ \psi$, which implies, together with the integral
identity in Lemma \ref{lem:st-uv}, that 
$$
   K_r(f, \rho)_{p,\sW_{\fa, \fb, \fc}^{\b,\g}} 
       = K_r\left( f\circ \psi^{-1}, \rho \right)_{p,\sW_\BB^{\b,\g}}.
$$
Consequently, both direct and inverse theorems on $\Lambda_{\fa, \fb, \fc}$ follow from the corresponding result
on the disk. 
\end{proof}
 
The proof of the theorem shows that polynomial approximation on the curved domain $\Lambda_{\fa, \fb, \fc}$
follows from the corresponding result for functions that are even on the unit disk. This applies to many other 
results, for example, on the near-best approximation operator, defined by a resampling of the Fourier partial sum,
that has a highly localized kernel \cite{DaiX, X21}. Moreover, the addition formula leads to a natural 
pseudo-convolution structure which can be utilized to study a variety of problems about the Fourier orthogonal 
expansions on $\Lambda_{\fa, \fb, \fc}$; for example, summabilities \cite{X21}, maximum functions and 
multipliers \cite{X22}.  
 
We end this section by specializing our setup to the special case of $\fa = 0$, $\fb = 1$, and $\fc \ge 0$. 
In this case, we  denote $\Lambda_{\fa,\fb,\fc}$ by 
$$
  \Lambda_{\fc} = \left \{(u,v): \sqrt{\fc} |u| \le v \le \sqrt{1-u^2 + \fc u^2}, \quad  |u| \le 1\right \}. 
$$
The weight funciton $\sW_{0, 1,\fc}^{\b,\g}$ on the domain is degenerated to 
\begin{align} \label{eq:sWc}
  \sW_{\fc}^{\b,\g}(u,v)  = v (v^2-\fc u^2)^{\b-\f12} \left(1-u^2-v^2+ \fc u^2 \right)^{\g}, \quad (u,v) \in \Lambda_\fc,
\end{align}
where $\b\ge -\f12$ and $\g > -1$. The simplest case is $\sW_\fc^{\f12,0} (u,v) = |v|$. The domains are depicted in Figure 2. 

It is worthwhile to remark that the domain $\Lambda_{\fc}$ is a triangle $\{(u,v): |u|\le v \le 1\}$ when $\fc = 1$. The 
classical Jacobi weight function on this triangle is $(v^2-u^2)^\b (1-v)^\g$, which is different from the
weight funciton $\sW_1^{\b,\g}(u,v) = v(v^2-u^2)^\b(1-v^2)^\g$ with $\fc =1$. Thus, our orthogonal structure is 
different from the classical one. For $\fc \ne 1$, the domain is a curved one depicted in Figure 2 and a 
circular sector of the unit disk if $\fc < 1$. We are not aware of any regular orthogonal basis of polynomials on
such a domain. It is enlightening that approximation and orthogonal structure on such a domain can be 
identified with those on the unit disk. 

By the result established in the previous subsection, OPs that consist of an orthogonal basis for 
$\CV_n(\Lambda_\fc, \sW_\fc^{\b,\g})$ are 
\begin{equation} \label{eq:sQ_OP}
   \sQ_{j,n}^{\b,\g}(u,v) =  C_{n-2j}^{(2 j + \b+\g+1)}(u) (1-u^2)^{j} P_j^{(\g, \b-\f12)}\bigg(2 \frac{-\fc u^2+v^2}{(1-u^2)} -1\bigg)
\end{equation}
with $0 \le j \le \lfloor \f n 2 \rfloor$. They are the eigenfunctions of the second-order differential operator
 $\fD_\fc^{\b,\g} = \fD_{0,1,\fc}^{\b,\g}$ given by 
\begin{align*}
 \fD_{\fc}^{\b,\g} \, & = (1-u^2) \partial_{uu} - 2(-\fc+v^2) \frac{u}{v} \partial_{u v} + \left(1 - v^2
       - \frac{ \fc (1-\fc) u^2}{v^2}\right)\partial_{vv}\\
  & + \left( \fc + \frac{ \fc (1-\fc) u^2}{v^2} +2\b \right) \frac{1}{v}\partial_{v} 
    + (2(\b + \g)+3) \left( u \partial_u + v\partial_{v}\right).
\end{align*}
Although the coefficients in front of the differentials in $\fD_{\fc}^{\b,\g}$ are rational functions, it does not
have a singularity for integrability, as seen by \eqref{eq:fD=D} and Lemma \ref{lem:st-uv}.

\section{Approximation and OPs on domains of revolution}
\setcounter{equation}{0}

We consider OPs on domains defined via a rotation around an axis. The simplest example of such a 
domain is the unit ball in $\RR^d$. OPs on the unit ball have been studied extensively; we review
relevant results that we shall need in the first subsection. Our goal is to consider OPs on a class of 
domains arising from the rotation of a fully symmetric domain, studied in \cite{X24}, which we recall 
in the second subsection. We show that these families of OPs can be related to OPs on the unit 
ball, as an extension of the results in the previous section, in the third section. 

\subsection{OPs on the unit ball} 
The classical weight function $W_\mu$ on the unit ball $\BB^d$ of $\RR^d$ is defined by 
$$
       W_\g (\xb) = (1-\|\xb\|^2)^{\g}, \quad \g > -1. 
$$
An orthogonal basis for $\CV_n(W_\g, \BB^d)$ can be given in terms of the Jacobi polynomials and 
spherical harmonics. The latter are the restrictions of homogeneous harmonic polynomials on the unit 
sphere $\sph$, and they are OPs on $\sph$. Let $\CH_n^d$ be the space of spherical harmonics of degree 
$n$ of $d$ variables. It is known that
\begin{equation} \label{eq:dimHnd}
 \dim \CH_n^d = \binom{n+d-1}{n} - \binom{n+d-3}{n-2}.
\end{equation}
Spherical harmonics of different degrees are orthogonal in $L^2(\sph)$. More precisely, let $\{Y_\ell^n\}$
be an orthonormal basis of $\CH_n^d$. Then, 
$$
   \frac{1}{\o_d} \int_{\sph} Y_\ell^n(\xi) Y_{\ell'}^{n'} (\xi) \d \s(\xi) =  \delta_{\ell,\ell'} \delta_{n,n'}, 
$$
where $\d \s$ denotes the surface measure $\d \s$ on $\sph$ and $\o_d$ denotes the surface area of $\sph$.
An orthogonal basis of $\CH_n^d$ can be given in terms of Jacobi polynomials in spherical coordinates. 

Let $\{Y_\ell^{n-2m}: 1 \le \ell \le  \dim \CH_{n-2m}\}$ be an orthonormal basis of $\CH_{n-2m}^d$ for $ 0\le m \le n/2$. 
Define \cite[(5.2.4)]{DX}
\begin{equation}\label{eq:basisBd}
  \Pb_{\ell, m}^n (\Wb_\g; \xb) = P_m^{(\g, n-2m+\f{d-2}{2})} \left(2\|\xb\|^2-1\right) Y_{\ell}^{n-2m}(\xb).
\end{equation}
Then $\{\Pb_{\ell,m}^n(\Wb_\g): 0 \le m \le n/2, 1 \le \ell \le \dim \CH_{n-2m}\}$ is an orthogonal basis of 
$\CV_n^d(W_\g,\BB^d)$. OPs in $\CV_n(W_\mu, \BB^d)$ are eigenfunctions of a second-order differential operator 
defined by 
$$
   \CD_\BB^\g: = \Delta - \la x ,\nabla \ra^2 - (2\g + d) \la x, \nabla\ra; 
$$
more precisely, (\cite[(5.23)]{DX}) 
$$
  \CD_\BB^\g \Pb = - n(n+2\g+d) \Pb, \quad \forall \Pb \in  \CV_n(W_\mu,\BB^d).
$$

For our purpose, we need to consider another class of OPs on the unit ball with respect to the
weight function 
\begin{equation} \label{eq:WbgBB}
  \Wb_\BB^{\b,\g}(\yb) = |y_{d+1}|^{2\b} (1-\|\yb\|^2)^\g, \qquad  \yb \in \BB^{d+1},
\end{equation}
which we state on the ball $\BB^{d+1}$ instead of $\BB^d$ in comparison with the notation in the next subsection. 
This is a special case of the weight function $h_\k(\yb)(1-\|\yb\|^2)^\g$ with $h_\k(\yb) = \prod_{i=1}^{1} |y_i|^{2\k_i}$,
invariant under the group $\ZZ_2^{d+1}$ \cite[Section 8.1]{DX}. Orthogonal basis for the weight function 
$h_\k^2(\xb)(1-\|\yb\|)^{d+1}$ can be deduced from the $h$-harmonics associated with the Dunkl Laplacian. 
Instead of going through the tedious reduction from the existing basis (cf. \cite[Proposition 8.1.5]{DX}), we provide a direct
verification of the result. 

\begin{prop}  \label{prop:OP_Qb}
Let $\Pb_{\ell,j}^n(\Wb_\g)$ be the OPs on $\BB^d$ in \eqref{eq:basisBd}. Let 
\begin{equation} \label{eq:OP_Qb}
  \Qb_{\ell, j, k}^n (\Wb_\BB^{\b,\g}; \xb,t) = C_{n-k}^{(k+\g+\frac{d+1}{2}, \b)}(t) (1-t^2)^\frac{k}{2} 
     \Pb_{\ell, j}^k \left(\Wb_\g; \frac{\xb}{\sqrt{1-t^2}}\right).
\end{equation}
Then, $\{\Qb_{\ell,j,k}^n(\Wb_\BB^{\b,\g}):  1 \le \ell \le \dim \CH_{n-2j}^d, 0 \le m \le k/2, 0\le k \le n\}$ consists of an 
orthogonal basis for the space $\CV_n(\Wb_\BB^{\b,\g}, \BB^{d+1})$.
\end{prop}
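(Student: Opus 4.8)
The plan is to establish three facts in sequence: each $\Qb_{\ell,j,k}^n$ is a genuine polynomial of degree exactly $n$, the family is mutually orthogonal with respect to $\Wb_\BB^{\b,\g}$, and its cardinality equals $\dim\CV_n(\Wb_\BB^{\b,\g},\BB^{d+1})$. For the first, I would use that the spherical harmonic factor in \eqref{eq:basisBd} is homogeneous of degree $k-2j$, so that
\[
(1-t^2)^{\f k2}\,\Pb_{\ell,j}^k\!\left(\Wb_\g;\tfrac{\xb}{\sqrt{1-t^2}}\right)
= (1-t^2)^{\,j}\,P_j^{(\g,\,k-2j+\f{d-2}{2})}\!\left(\tfrac{2\|\xb\|^2}{1-t^2}-1\right)Y_\ell^{k-2j}(\xb).
\]
Here the $(1-t^2)^{j}$ clears the denominators produced by the degree-$j$ Jacobi polynomial, so this factor is a polynomial of degree $k$ in $(\xb,t)$; multiplying by the generalized Gegenbauer polynomial $C_{n-k}^{(k+\g+\f{d+1}{2},\b)}(t)$, which has degree $n-k$ in $t$, yields a nonzero polynomial of degree $n$.

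The orthogonality is the heart of the argument, and I would prove it by a scaling that decouples the two groups of variables. For fixed $t\in(-1,1)$ I substitute $\xb=\sqrt{1-t^2}\,\ub$ with $\ub\in\BB^d$, so that $\xb/\sqrt{1-t^2}=\ub$, $\d\xb=(1-t^2)^{\f d2}\d\ub$ and $1-\|\xb\|^2-t^2=(1-t^2)(1-\|\ub\|^2)$. Under this substitution the integrand of $\int_{\BB^{d+1}}\Qb_{\ell,j,k}^n\,\Qb_{\ell',j',k'}^{n'}\,\Wb_\BB^{\b,\g}\,\d\xb\,\d t$ factors as a function of $t$ times a function of $\ub$, so Fubini turns the integral into a product. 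The $\ub$-factor is exactly the orthogonality relation $\int_{\BB^d}\Pb_{\ell,j}^k(\Wb_\g;\ub)\,\Pb_{\ell',j'}^{k'}(\Wb_\g;\ub)\,(1-\|\ub\|^2)^\g\,\d\ub$ for the ball polynomials, which vanishes unless $k=k'$, $j=j'$, $\ell=\ell'$.

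Once $k=k'$, the step I expect to be the crux is the exact matching of the remaining $t$-weight. The powers of $(1-t^2)$ collected from the two factors $(1-t^2)^{\f k2}$, from the $(1-t^2)^\g$ inside $\Wb_\BB^{\b,\g}$, and from the Jacobian $(1-t^2)^{\f d2}$ must reassemble, together with $|t|^{2\b}$, into the orthogonality weight of the generalized Gegenbauer polynomials, namely
\[
(1-t^2)^{k}\,|t|^{2\b}\,(1-t^2)^{\g}\,(1-t^2)^{\f d2}=|t|^{2\b}(1-t^2)^{\,k+\g+\f d2}=g_{\l,\b}(t),\qquad \l=k+\g+\tfrac{d+1}{2}.
\]
This is precisely why the shifted parameter $k+\g+\f{d+1}{2}$ appears in the Gegenbauer factor. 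Consequently the $t$-factor is the orthogonality relation for $C_{n-k}^{(\l,\b)}$ and $C_{n'-k}^{(\l,\b)}$, which vanishes unless $n-k=n'-k$, i.e.\ $n=n'$. Combining the two factors shows the integral is zero unless the full index tuple agrees, giving both mutual orthogonality across all degrees and nonvanishing norms.

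Finally, for the basis property I would count. For each $0\le k\le n$ the admissible pairs $(\ell,j)$ number $\dim\CV_k(\Wb_\g,\BB^d)=\binom{k+d-1}{d-1}$, so by the hockey-stick identity the number of $\Qb_{\ell,j,k}^n$ equals $\sum_{k=0}^n\binom{k+d-1}{d-1}=\binom{n+d}{d}=\dim\CV_n(\Wb_\BB^{\b,\g},\BB^{d+1})$. The same computation shows that the $\Qb$'s of all degrees are mutually orthogonal nonzero polynomials, hence linearly independent; since $\sum_{m=0}^N\binom{m+d}{d}=\binom{N+d+1}{d+1}=\dim\Pi_N^{d+1}$, the members of degree at most $N$ span $\Pi_N^{d+1}$. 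In particular those of degree below $n$ span $\Pi_{n-1}^{d+1}$, and since each $\Qb_{\ell,j,k}^n$ is orthogonal to all of them it lies in $\CV_n$. With the matching count already in hand, this proves that the $\Qb_{\ell,j,k}^n$ form an orthogonal basis of $\CV_n(\Wb_\BB^{\b,\g},\BB^{d+1})$.
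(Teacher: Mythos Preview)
Your argument is correct and follows essentially the same route as the paper: the key change of variables $\xb=\sqrt{1-t^2}\,\ub$ that factors the $\BB^{d+1}$ integral into a ball integral (yielding $\delta_{k,k'}\delta_{j,j'}\delta_{\ell,\ell'}$ via orthogonality of $\Pb_{\ell,j}^k(\Wb_\g)$) times a one-dimensional integral against $|t|^{2\b}(1-t^2)^{k+\g+\f d2}$ (yielding $\delta_{n,n'}$ via orthogonality of the generalized Gegenbauer polynomials) is exactly the paper's computation. Your treatment is in fact more thorough than the paper's, which only sketches the parity/degree point and omits the dimension count you supply with the hockey-stick identity.
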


\begin{proof}
Let $\la \cdot,\cdot\ra_{\b,\g}$ be the inner product in $L^2(\Wb_{\b,\g}, \BB^{d+1})$. It suffices to identify 
a polynomial $p_{n-k}$ of one variable such that 
$$
\Qb_k^n(\xb,t) = p_{n-k}(t) (1-t^2)^\frac{k}{2} \Pb_k \left(\frac{\xb}{\sqrt{1-t^2}}\right), \quad \Pb_k = \Pb_{\ell, m}^k(\Wb_\g), 
$$
satisfies $\la \Qb_k^n, \Qb_{k'}^{n'} \ra_{\b,\g} = 0$ if $(k,n) \ne (k',n')$. Indeed, since $\Pb_{\ell, m}^k(\Wb_\g)$
is an even polynomial if $k$ is even and an odd polynomial if $k$ is odd, it follows that $\Qb_k^n$ is a
polynomial of degree at most $n$. Moreover, the proof below shows that the polynomials containing 
$\Pb_{\ell,m}^k(\Wb_\g)$ of the same degree $k$ are orthogonal. Now, for $(\xb,t) \in \BB^{d+1}$, a change of 
variables $\xb \to \sqrt{1-t^2}\, \ub$, $\ub \in \BB^d$, leads to
\begin{align*}
\int_{\BB^{d+1}}  f(\xb,t)  \d x \d t =  \int_{-1}^1  \int_{\BB^d} f\left(\sqrt{1-t^2} \ub, t\right)  \d \ub  (1-t^2)^{\f d 2} \d t
\end{align*}
and $\Wb_{\b,\g}(\xb,t) =|t|^{2\b} (1-\|\ub|^2)^\g  (1-t^2)^\g$. It follows readily then that 
\begin{align*}
 \la \Qb_k^n, \Qb_{k',n'}^{\b,\g}\ra_{\b,\g} \,& = 
   \int_{-1}^1 p_{n-k}(t) p_{n'-k'}(t) |t|^{2\b} (1-t^2)^{\g+k+\f{d}{2}} \d t \int_{\BB^d} \Pb_k(\ub) \Pb_{k'} (\ub)
     \Wb_{\g} (\ub) \d u \\
  & =  \delta_{k, k'} \int_{-1}^1 p_{n-k}(t) p_{n'-k}(t) |t|^{2\b} (1-t^2)^{\g+\f{d}{2}} \d t,  
\end{align*}
so that the orthogonality holds with $p_{n-k}$ being a generalized Gegenbauer polynomial given by 
$p_{n-k} = C_{n-k}^{(k+\g+\frac{d+1}{2}, \b)}$. This completes the proof.
\end{proof}
  
 OPs in the space $\CV_n(\Wb_\BB^{\b,\g}, \BB^{d+1})$ are eigenfunctions of the differential-difference operator
 $\CD_\BB^{\b,\g}$ defined by, for $\yb \in \BB^{d+1}$, \cite[Theorem 8.1.3 and (7.5.3)]{DX}
\begin{align}\label{eq:D_BBd+1}
  \CD_\BB^{\b,\g} =  \Delta - \la \yb ,\nabla \ra^2 - (2\b + 2\g + d+1) \la \yb, \nabla\ra 
     + \b \left(\frac{2}{y_{d+1}} \partial_{d+1} - \frac{1-\s_{d+1}^2}{y_{d+1}^2} \right) 
\end{align}
where $\s_{\d+1}$ denotes the reflection operator in $y_{d+1}$, as in \eqref{eq:diffB2}. More precisely, 
$$
  \CD_\BB^{\b,\g} \Qb =  - n(n + 2\b + 2\g +d+1) \Qb, \quad \forall \Qb \in  \CV_n(\Wb_{\b,\g},\BB^{d+1}).
$$
Just like in the case of two variables, the operator $\CD_\BB^{\b,\g}$ becomes a differential operator when acting on
functions that are even in the $(d+1)$-th variable. 

Let $\Pb_n(\Wb_\BB^{\b,\g}; \cdot,\cdot)$ denote the reproducing kernel of the space $\CV_n(\Wb_\BB^{\b,\g},\BB^{d+1})$,
which satisfies 
$$
   \Pb_n\left(\Wb_\BB^{\b,\g}; \xb,\yb\right) = \sum_{|\alpha|=n} \Qb_\alpha(\xb) \Qb_\alpha(\yb), \qquad \xb, \yb \in \BB^{d+1}, 
$$
where $\{\Qb_\alpha:  |\alpha| = n, \, \a \in \NN_0^{d+1}\}$ is an orthonormal basis of  $\CV_n(\Wb_\BB^{\b,\g},\BB^{d+1})$.
This kernel has a closed-form formula, known as an addition formula for OPs: for $\xb=(\xb',t)$ and $\yb = (\yb',s)$,  
\begin{align}\label{eq:kernel_Bd}
    \Pb_n\left(\Wb_\BB^{\b,\g}; \xb,\yb\right) = c_{\b,\g} \int_{-1}^1 \int_{-1}^1 
       & Z_n^{\b+\g+\frac{d+1}{2}} \left(\la \xb',\yb' \ra + u\, t s + v \sqrt{1-\|\xb\|^2}  \sqrt{1-\|\yb\|^2} \right) \notag \\
        & \times  (1+u) (1-u^2)^{\b-1} (1-v^2)^{\g-\f12} \d u \d v,
\end{align}
where $c_{\b,\g}$ is the normalization constant, $Z_n^\l$ is defined in \eqref{eq:Zn}, and the identity holds under 
the limit if $\b = 0$ and/or $\g =-\f12$; see \cite[Theorem 8.1.16]{DX}.  

\subsection{OPs for a family of domains of revolution}
We consider domains derived from rotating a fully symmetric domain in $\RR^2$. Let $\Omega$ be a 
domain of $\RR^2$ that is symmetric in its first variable; that is, $(s,t) \in \Omega$ implies $(-s,t) \in \Omega$. Let 
$\Omega_+ = \{(s,t) \in \Omega: s \ge 0\}$. We consider the domain defined by 
$$
 \XX^{d+1} = \left\{ (\xb,t) \in  \RR^{d+1}, \,\,  \xb \in \RR^d, \,\, , t \in\RR, \,\, (\|\xb \|,t ) \in \Omega_+ \right \},
$$
which is the rotation of $\Omega_+$ around the $t$ axis for $d=2$. Let $\sW$ be a weight function defined 
on $\Omega_+$. On $\XX^{d+1}$ we define
$$
      \Wb(\xb) = \sW(\|x\|,t), \qquad (\xb, t) \in \XX^{d+1}.
$$
Construction of orthogonal basis in $L^2(\Wb,\XX^{d+1})$ has been studied in \cite{X24}, motivated by the recent
advances about OPs on circular quadratic domains \cite{OX, X20, X21a}. In the present section, we 
revisit the case when $\Omega$ is fully symmetric. 

Let $\CV_n(\Wb, \XX^{d+1})$ denote the space of OPs of degree $n$. In the case that $\Omega$ is fully
symmetric, orthogonal bases have been constructed for $\CV_n(\Wb, \XX^{d+1})$ in \cite{X24} and, just 
as in the case of two variables studied in the previous section, the bases are split for those being even 
in the $t$-variable and those being odd in the $t$-variable. We denote by 
$$
      \CV_n^{\sE}(\Wb, \XX^{d+1}) = \left\{Y \in \CV_n(\Wb, \XX^{d+1}): Y(\xb, - t) = Y(\xb,t) \right\}
$$ 
the subspaces of $\CV_n(\Wb, \XX^{d+1})$ that consisit of OPs even in the $t$-variable. 
Let $\sW(s,t) = \sw(s^2,t^2)$ be the fully symmetric weight on $\Omega$, where $\sw$ is a 
weight function on $\sqrt{\Omega}$, the domain defined in \eqref{eq:sqrtOmega}. For $k \in \NN$, define 
\begin{equation} \label{eq:Wk_fullsym}
    \sw_{-\f12, -\f12}^{(k)}(s,t) = s^{k+\frac{d-2}{2}} t^{-\f12} \sw(s,t), \quad (s,t) \in \sqrt{\Omega}. 
\end{equation}

\begin{prop} \label{prop:OP_fullSym}
Let $\big\{\sQ_{j,m}\big(\sw^{(k)}_{-\f12, -\f12}\big): 0 \le j \le m\big\}$ be an orthogonal basis of the
space $\CV_m\big(\sw^{(k)}_{- \f12, -\f12}, \sqrt{\Omega}\big)$ and let $\{Y_\ell^k: 1 \le \ell \le \dim \CH_k^d\}$ be 
an orthogonal basis for $\CH_k^d$. Then the space $\CV_n^\sE\big(\Wb,\XX^{d+1}\big)$ has an 
orthogonal basis given by 
\begin{equation} \label{eq:basisE}
 \Qb_{j,n-2m,\ell}^n(\xb,t) =  \sQ_{j,m} \Big(\sw_{-\f12, -\f12}^{(n-2m)}; \|\xb\|^2, t^2\Big) Y_\ell^{n-2m} (\xb)
\end{equation}
for $1 \le \ell \le \dim \CH_{n-2m}^d, \,0\le j \le  m \le \lfloor \frac{n}{2} \rfloor$.
\end{prop}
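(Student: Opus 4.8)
The plan is to reduce every inner product $\la\,\cdot\,,\cdot\,\ra_\Wb$ on $\XX^{d+1}$ to the product of an angular integral over $\sph$ and a two-variable integral over $\sqrt{\Omega}$, and then to read off orthogonality, membership in $\CV_n^\sE$, and completeness. First I would record the structural facts. Since $Y_\ell^{n-2m}$ is homogeneous of degree $n-2m$ and $\sQ_{j,m}\big(\sw^{(n-2m)}_{-\f12,-\f12}\big)$ has degree $m$, substituting $\big(\|\xb\|^2,t^2\big)$ turns it into a polynomial of degree $2m$ in $(\xb,t)$; hence $\Qb_{j,n-2m,\ell}^n$ is a polynomial of degree exactly $n$ (its leading homogeneous part is the nonzero product of the top part of $\sQ_{j,m}$ in $\big(\|\xb\|^2,t^2\big)$ with $Y_\ell^{n-2m}$), and it is manifestly even in $t$.

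The key step is the integral reduction. Writing $\xb = r\xi$ with $r=\|\xb\|\ge 0$ and $\xi\in\sph$, we have $\d\xb = r^{d-1}\d r\,\d\s(\xi)$ and $Y_\ell^k(r\xi)=r^k Y_\ell^k(\xi)$. In $\la \Qb_{j,n-2m,\ell}^n, \Qb_{j',n'-2m',\ell'}^{n'}\ra_\Wb$ the angular part factors out as $\int_{\sph} Y_\ell^{n-2m}(\xi)Y_{\ell'}^{n'-2m'}(\xi)\,\d\s(\xi)$, which vanishes unless $n-2m=n'-2m'=:k$ and $\ell=\ell'$. When these match, I would use that $\Wb$ and the remaining factors are even in $t$ to replace the $t$-integral by twice its value over $t\ge 0$, and then substitute $\rho=r^2$, $\tau=t^2$. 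The two harmonics contribute $r^{2k}$ and the Jacobian contributes $r^{d-1}$, so that $r^{2k+d-1}\,\d r = \tfrac12\rho^{\,k+\f{d-2}2}\,\d\rho$ and $\d t=\tfrac12\tau^{-\f12}\,\d\tau$; consequently the weight factor becomes $\sw(\rho,\tau)\,\rho^{\,k+\f{d-2}2}\tau^{-\f12}$, which is precisely $\sw^{(k)}_{-\f12,-\f12}(\rho,\tau)$ by \eqref{eq:Wk_fullsym}. Up to a fixed positive constant the inner product therefore collapses to $\la \sQ_{j,m},\sQ_{j',m'}\ra_{\sw^{(k)}_{-\f12,-\f12}}$ over $\sqrt{\Omega}$. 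Matching the exponents here — getting $k+\f{d-2}2$ on $s$ and $-\f12$ on $t$ to agree with the definition of $\sw^{(k)}_{-\f12,-\f12}$ — is the part that must be carried out with care, and is where the whole construction hinges.

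With the reduction in hand, the remaining steps are formal. Mutual orthogonality follows at once: the angular part forces $k$ and $\ell$ to agree, and then $\la \sQ_{j,m},\sQ_{j',m'}\ra_{\sw^{(k)}_{-\f12,-\f12}}$ forces $m=m'$ (whence $n=n'$) and $j=j'$. To see that $\Qb_{j,n-2m,\ell}^n\in\CV_n^\sE$, I would pair it with an arbitrary polynomial of degree $<n$; by parity only its even-in-$t$ part contributes, and that part is a finite combination of terms $t^{2c}\|\xb\|^{2a}Y_{\ell''}^{k''}(\xb)$ with $2a+2c+k''\le n-1$. The same reduction, after the angular integral forces $k''=n-2m=:k$ and $\ell''=\ell$, sends each pairing to a multiple of $\la \sQ_{j,m},\rho^a\tau^c\ra_{\sw^{(k)}_{-\f12,-\f12}}$, where now $2a+2c\le 2m-1$, i.e.\ $a+c\le m-1<m$, so it vanishes because $\sQ_{j,m}$ is orthogonal to all polynomials of lower degree; hence $\Qb_{j,n-2m,\ell}^n\in\CV_n^\sE$.

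Finally, completeness. For fixed $(k,\ell)$ the orthogonal polynomials $\big\{\sQ_{j,m}\big(\sw^{(k)}_{-\f12,-\f12}\big): 0\le j\le m,\ m\ge 0\big\}$ form a basis of all two-variable polynomials, so after the substitution $(\rho,\tau)=\big(\|\xb\|^2,t^2\big)$ and multiplication by $Y_\ell^k$ they span every $\|\xb\|^{2a}t^{2c}Y_\ell^k(\xb)$; letting $(k,\ell)$ vary and invoking the decomposition of polynomials on $\RR^d$ into harmonic components shows that the $\Qb$'s span all polynomials even in $t$. Since the even-in-$t$ polynomials decompose orthogonally as $\bigoplus_n\CV_n^\sE$ and each $\Qb_{j,n-2m,\ell}^n$ lies in $\CV_n^\sE$, projecting any element of $\CV_n^\sE$ onto $\CV_n^\sE$ in such a spanning expansion kills all terms of degree $\ne n$ and exhibits it as a combination of the degree-$n$ polynomials; together with their mutual orthogonality this proves they form an orthogonal basis of $\CV_n^\sE(\Wb,\XX^{d+1})$, as claimed. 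One may alternatively confirm the count $\sum_{m\le n/2}(m+1)\dim\CH_{n-2m}^d=\dim\CV_n^\sE$.
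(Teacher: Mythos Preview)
Your argument is correct. The paper does not prove this proposition in-text; it simply records ``This is established in \cite[Proposition 4.5]{X24}.'' Your proof is the natural direct verification that one would expect to find there: pass to spherical coordinates $\xb=r\xi$, use orthogonality of spherical harmonics on $\sph$ to decouple the angular index $k=n-2m$ and $\ell$, then substitute $\rho=r^2$, $\tau=t^2$ to reduce the remaining radial integral to the inner product for $\sw^{(k)}_{-\f12,-\f12}$ on $\sqrt{\Omega}$. Your bookkeeping of the exponents (the factor $r^{2k+d-1}\,\d r$ giving $\tfrac12\rho^{k+\f{d-2}{2}}\,\d\rho$, together with $\d t=\tfrac12\tau^{-\f12}\,\d\tau$) matches \eqref{eq:Wk_fullsym} exactly, and your treatment of membership in $\CV_n^\sE$ via the harmonic decomposition of lower-degree even-in-$t$ polynomials is clean. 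The completeness step (spanning all even-in-$t$ polynomials and then projecting) is a slight elaboration over a bare dimension count, but both routes are fine.
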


This is established in \cite[Proposition 4.5]{X24}. It follows that 
\begin{align*}
  \dim \CV_n^\sE(\Wb,\XX^{d+1}) = 
     \sum_{m=0}^{\lfloor \frac{n}{2} \rfloor} (m+1) \dim \CH_{n-2m}^d = 
        \sum_{m=0}^{\lfloor \frac{n}{2} \rfloor} \binom{n-2m+d-1}{d-1}. 
\end{align*}

We are interested in the specific case when $ \triangle_{\fa,\fb,\fc}= \sqrt{\Omega_{\fa,\fb,\fc}}$, given in 
the previous section and $\sw(s,t) = t^{\f12} \sw_{\fa,\fb,\fc}^{\k_1,\k_2-\f12, \k_3}(s,t)$, using the notation of \eqref{eq:w_abc}. In this
setting, the domain becomes 
$$
\XX_{\fa,\fb,\fc}^{d+1} := \left \{ (\xb,t) \in  \RR^{d+1}: \,\,  \xb \in \RR^d, \,\,  t \in\RR,\,\,  
     (\|\xb \|,t ) \in \Omega_{\fa,\fb,\fc}^+ \right\},
$$
where $ \Omega_{\fa,\fb,\fc}^+ = \{(u,v) \in  \Omega_{\fa,\fb,\fc}: u \ge 0\}$, and the corresponding 
weight function $\Wb$ becomes 
\begin{align*}
  \Wb_{\fa,\fb,\fc}^\k (\xb,t) \,& = |t| \sw_{\fa,\fb,\fc}^{\k_1,\k_2-\f12, \k_3} \big(\|\xb\|^2, t^2 \big). 
\end{align*}
With these choices, the weight function $\sw_{-\f12,\pm \f12}^{(k)}$ becomes 
$$
    \sw_{-\f12, -\f12}^{(k)}(s,t) = s^{k+\frac{d-2}{2}} \sw_{\fa,\fb,\fc}^{\k_1,\k_2-\f12, \k_3}(s,t) 
      =  \sw_{\fa,\fb,\fc}^{\k_1+k+\frac{d-2}{2},\k_2-\f12, \k_3}(s,t), 
$$
which is a classical Jacobi weight on the triangle. Hence, by \eqref{eq:OPabc}, an orthogonal
basis of the space $\CV_m\big(\sw^{(k)}_{- \f12, -\f12}, \sqrt{\Omega}\big)$ is given by the Jacobi 
polynomials on the triangle,
$$
  \sQ_{j,m}\left(\sw^{(k)}_{-\f12, - \f12}; s,t \right) = \sT_{j,m}^{\k_1+k+\frac{d-2}{2},\k_2-\f12, \k_3}
       \left(s, \frac{-\fa+(\fa-\fc)s +t}{\fb-\fa} \right). 
$$
Consequently, by Proposition \ref{prop:OP_fullSym} and \eqref{eq:basisE}, an orthogonal basis for
$\VV_n^\sE(\Wb_{\fa,\fb,\fc}^{\b,\g}, \XX^{d+1})$ is given by 
\begin{equation}\label{eq:OP-k-abc}
  \Qb_{j,m,\ell}^n  (\xb,t) = 
 \sT_{j,m}^{\k_1+n-2m +\frac{d-2}{2},\k_2 -\f12, \k_3} \left(\|\xb\|^2, \frac{-\fa+(\fa-\fc)\|\xb\|^2 +t^2}{\fb-\fa} \right) Y_\ell^{n-2m} (\xb)
\end{equation}
with $1 \le \ell \le \dim \CH_{n-2m}^d, \,0\le j \le  m \le \lfloor \frac{n}{2} \rfloor$.  

As in the case of two variables, we are particularly interested in the upper half of the domain $\XX_{\fa,\fb,\fc}^{d+1}$,
which we denote by
$$
    \Lambda_{\fa,\fb,\fc}^{d+1} := \left\{ (x,t) \in \XX_{\fa,\fb,\fc}^{d+1}: t \ge 0 \right\}
$$
and the case $\k = (0, \b, \g)$, so that the weight function becomes 
\begin{align*}
  \Wb_{\fa,\fb,\fc}^{\b,\g}(\xb,t)  \, &= |t| \sw_{\fa,\fb,\fc}^{0,\b-\f12, \g} \big(\|\xb\|^2, t^2 \big) \\ 
  &   = |t| \left(\frac{-\fa+(\fa-\fc)\|\xb\|^2+t^2}{\fb-\fa}\right)^{\b-\f12} \left(\frac{\fb-(\fb-\fc)\|\xb\|^2-t^2}{\fb-\fa}\right)^{\g}. 
\end{align*}
For $d = 3$, the domain $\Lambda_{\fa,\fb,\fc}^{d+1}$ is the rotation of fully symmetric domain 
$\Lambda_{\fa,\fb,\fc}$ in the plane around the vertical axis. For $\fa = 0$, we depict them in Figure 4,  
\begin{figure}[htb]
\begin{minipage}{1\textwidth}
\centering
\includegraphics[width=4cm]{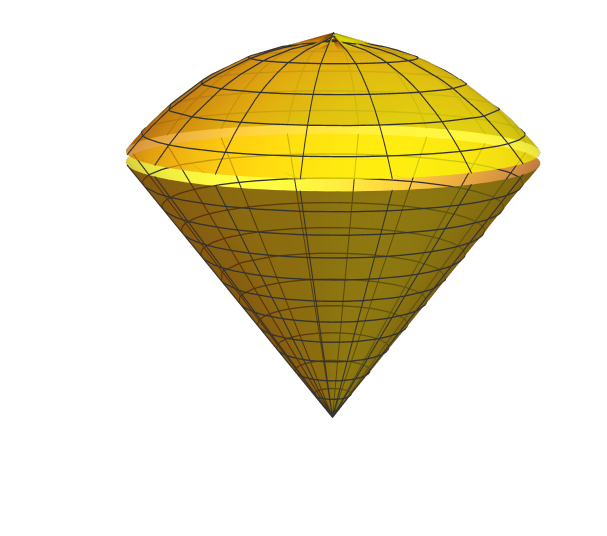}  \quad 
 \includegraphics[width=4.2cm]{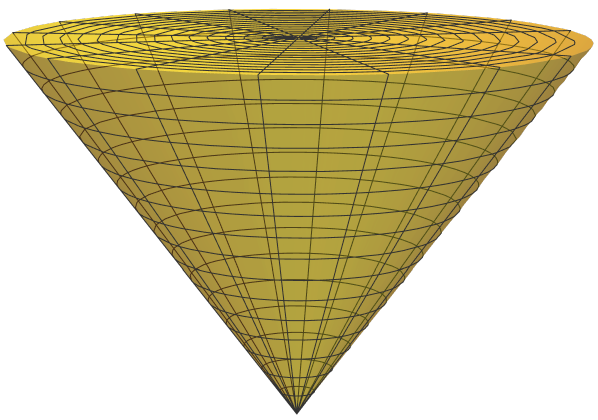} \quad
 \includegraphics[width=3.8cm]{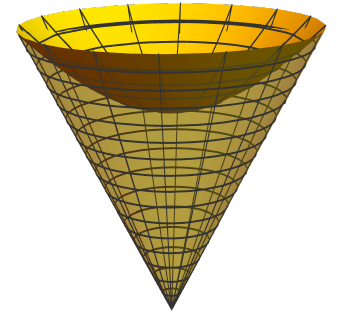}
\end{minipage}
\caption{Left $\fb > \fc$ \qquad Center $\fc = \fb$ \qquad Right $\fb < \fc$ \qquad}
\label{Fig4}
\end{figure}
where the left and the right figures are rotations of the two domains depicted in Figure 3, and the one 
in the center is a solid cone. For $\fa> 0$, we also have three types of domains that are of rotations of
the domains depicted in Figure 2. 

We have an analog of Lemma \ref{lem:st-uv}. Let $\BB_+^{d+1} = \{(\xb,t) \in \BB^{d+1}: t \ge 0\}$ be the upper part of 
the unit ball $\BB^{d+1}$. Recall $\ft(u,v)$ defined in \eqref{eq:tuv} and $\Wb_\BB^{\b,\g}$ defined in \eqref{eq:WbgBB}. 

\begin{lem} \label{lem:st-uv2}
A bijection bewteen $\Lambda_{\fa,\fb,\fc}^{d+1}$ and $\BB_+^{d+1}$ is given by 
\begin{equation}\label{st-uv2}
\psi: (\xb, t) \in  \Lambda_{\fa,\fb,\fc}^{d+1} \mapsto (\ub, v) \in \BB_+^2, \quad \psi(\xb,t) = (\xb,  \ft (\|\xb\|, t)) = (\ub,v),
\end{equation}
which leads to the integral identity 
$$
   \int_{\Lambda_{\fa,\fb,\fc}^{d+1}} f(\xb,t) \Wb_{\fa,\fb,\fc}^{\b,\g} (\xb,t) \d \xb \d t = 
       \int_{\BB_+^{d+1}} (f\circ\psi^{-1})(\ub,v) \Wb_\BB^{\b,\g}(\ub, v) \d \ub \d v. 
$$
\end{lem}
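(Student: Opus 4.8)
The plan is to reduce everything to the planar computation of \lemref{lem:st-uv}, exploiting that $\psi$ fixes the first $d$ coordinates, $\ub = \xb$, and moves only the last one via $t \mapsto v = \ft(\|\xb\|, t)$. In particular $\|\xb\|$ enters the transformation of the last coordinate only as a parameter (through $\|\xb\|^2$), so the rotation direction contributes nothing beyond an identity block and the spherical structure is invisible to the change of variables. First I would exhibit the inverse,
$$
   \psi^{-1}(\ub, v) = \Big(\ub,\ \sqrt{(\fb-\fa) v^2 + \fa(1-\|\ub\|^2) + \fc \|\ub\|^2}\,\Big),
$$
and verify that $\psi$ is a bijection by translating the two defining inequalities of $\Lambda_{\fa,\fb,\fc}^{d+1}$: the lower constraint $\fa + (\fc-\fa)\|\xb\|^2 \le t^2$ is equivalent to $v^2 \ge 0$, the upper constraint $t^2 \le \fb + (\fc-\fb)\|\xb\|^2$ is equivalent to $\|\ub\|^2 + v^2 \le 1$, and $t \ge 0$ gives $v \ge 0$; these are precisely the inequalities cutting out $\BB_+^{d+1}$.

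Next I would compute the Jacobian. Since $\ub = \xb$, the differential $D\psi$ is block lower-triangular with a $d \times d$ identity block, so $|\det D\psi| = \partial_t v$. Differentiating $(\fb-\fa) v^2 = -\fa + (\fa-\fc)\|\xb\|^2 + t^2$ in $t$ gives $\partial_t v = t/((\fb-\fa) v)$, hence
$$
   t \, \d \xb \, \d t = (\fb-\fa)\, v \, \d \ub \, \d v,
$$
which is the exact counterpart of the relation $v\,\d u\,\d v = (\fb-\fa)\, t\,\d s\,\d t$ in \lemref{lem:st-uv}. For the weight I would insert $v^2 = (-\fa + (\fa-\fc)\|\xb\|^2 + t^2)/(\fb-\fa)$ into the two quadratic factors of $\Wb_{\fa,\fb,\fc}^{\b,\g}$: the first collapses to $(v^2)^{\b-\f12}$, and the second to $(1-\|\ub\|^2 - v^2)^{\g}$ because $1-\|\ub\|^2-v^2 = (\fb - (\fb-\fc)\|\xb\|^2 - t^2)/(\fb-\fa)$. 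Thus
$$
   \Wb_{\fa,\fb,\fc}^{\b,\g}(\xb,t) = |t| \, (v^2)^{\b-\f12} (1-\|\ub\|^2 - v^2)^{\g} = \frac{t}{v}\, \Wb_\BB^{\b,\g}(\ub,v),
$$
and multiplying by $\d\xb\,\d t$ and applying the Jacobian relation reproduces $\Wb_\BB^{\b,\g}(\ub,v)\,\d\ub\,\d v$ exactly as in the planar case, whence the integral identity follows on writing $f(\xb,t) = (f\circ\psi^{-1})(\ub,v)$.

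I do not anticipate a serious obstacle: the statement is a parametrized copy of \lemref{lem:st-uv}, and the single point deserving care is that the $d$-dimensional rotation variable enters only as the identity block of $D\psi$, so the determinant collapses to the same one-variable factor $\partial_t v$ as in two variables. The remaining bookkeeping---matching the $|t|$ prefactor and the shift $\b \mapsto \b-\f12$ against the $v^{2\b}$ coming from $\Wb_\BB^{\b,\g}$ in \eqref{eq:WbgBB}---is identical to the two-dimensional argument.
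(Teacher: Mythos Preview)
Your proposal is correct and follows essentially the same route as the paper: the paper's proof simply says it ``follows along the line of Lemma~\ref{lem:st-uv}'', records the Jacobian relation $s\,\d\ub\,\d s=(\fb-\fa)t\,\d\vb\,\d t$, and then the weight identity, which is exactly what you spell out in more detail (including the explicit inverse, the block-triangular Jacobian, and the bookkeeping on the two quadratic factors).
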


\begin{proof}
The proof follows along the line of Lemma \ref{lem:st-uv}. Computing the Jacobian of the change of variables 
$(\ub,s) \mapsto (\vb,t)$, it follows readily that 
$$
s \, \d \ub \d s = (\fb - \fa)t\, \d \vb \d t, 
$$
which leads to the identity 
$$
      \Wb_{\fa,\fb,\fc}^{\k} (\ub,s) \d \ub \d s =  t^{2\b} \left(1-\|\vb\|^2-t^2\right)^{\k_2} \d \vb \d t= \Wb_{\b,\g}(\vb,t) \d \vb \d t,
$$
so that the integral identity follows from a change of variables. 
\end{proof}

As an extension of Theorem \ref{prop:sQjn}, OPs in the space 
$\CV_n^\sE\big(\Wb_{\fa,\fb,\fc}^{\b,\g}, \Lambda_{\fa,\fb,\fc}^{d+1}\big)$ can be given in terms of the 
semi-classical (classical if $\b =0$) OPs in $\CV_n\big(\Wb_\BB^{\b,\g}, \BB^{d+1})$ on the unit ball. 

\begin{thm}\label{thm:QljnX}
Let $\Qb_{\ell, j, k}^n(\Wb_{\fa,\fb,\fc}^{\b,\g})$ be the polynomials defined by
\begin{align} \label{Q=Q}
 \Qb_{\ell, j, k}^n\big(\Wb_{\fa,\fb,\fc}^{\b,\g}; \xb,t \big)
 & =  
   C_{n-k}^{(k + \frac{d+1}2 +\g,\b)}(\ft) (1-\ft^2)^{\frac{k}{2}} 
         \Pb_{\ell,j}^k \Big(\Wb_\BB^{\b,\g}; \frac {\xb} {\sqrt{1-\ft^2} } \Big) \\
 &  = \left(\Qb_{\ell, j, k}^n \big(\Wb_\BB^{\b,\g}\big) \circ \psi \right) (\xb,t), \notag
\end{align}
where $\ft = \ft (\|\ub\|, t)$ and let $\Qb_{\ell, j, k}^n \big(\Wb_\BB^{\b,\g}\big)$ be defined in \eqref{eq:OP_Qb}. 
Then, the set 
$$
\left \{ \Qb_{\ell,j,k}^n\big(\Wb_{\fa,\fb,\fc}^{\b,\g}\big):  1 \le \ell \le \dim \CH_{n-2j}^d, 0 \le j \le k/2, 0\le k \le n \right\}
$$ 
consists of an orthogonal basis for the space $\CV_n^\sE\big(\Wb_{\fa,\fb,\fc}^{\b,\g}, \Lambda_{\fa,\fb,\fc}^{d+1}\big)$. 
\end{thm}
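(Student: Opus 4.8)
The plan is to mirror the proof of the two-variable statement in Theorem~\ref{prop:sQjn}, now using the bijection and integral identity of Lemma~\ref{lem:st-uv2} in place of Lemma~\ref{lem:st-uv}, and the ball basis of Proposition~\ref{prop:OP_Qb} in place of the semi-classical disk basis. I would take the second line of \eqref{Q=Q}, namely $\Qb_{\ell,j,k}^n(\Wb_{\fa,\fb,\fc}^{\b,\g}) = \Qb_{\ell,j,k}^n(\Wb_\BB^{\b,\g})\circ\psi$, as the working definition. Two things then have to be checked: that the right-hand side is a genuine polynomial of degree $n$ that is even in $t$ (so that it is even a candidate element of $\CV_n^\sE$), and that the orthogonality of the ball polynomials transfers through the integral identity of Lemma~\ref{lem:st-uv2}. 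Throughout, only the indices with $n-k$ even are relevant, since $\Qb_{\ell,j,k}^n(\Wb_\BB^{\b,\g})$ in \eqref{eq:OP_Qb} is even in its last variable precisely when $n-k$ is even.

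First I would verify polynomiality, which I expect to be the main obstacle, exactly as \eqref{sQ-sG} is the crux in two variables. Since $\psi(\xb,t)=(\xb,\ft(\|\xb\|,t))$ with $\ft^2 = \frac{-\fa+(\fa-\fc)\|\xb\|^2+t^2}{\fb-\fa}$ and $1-\ft^2 = \frac{\fb-(\fb-\fc)\|\xb\|^2-t^2}{\fb-\fa}$ both polynomials in $(\xb,t)$, the only potential non-polynomial contributions in \eqref{eq:OP_Qb} come from odd powers of $\ft$ and from $(1-\ft^2)^{k/2}$ coupled with the rescaling $\xb\mapsto\xb/\sqrt{1-\ft^2}$. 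For $n-k$ even, the generalized Gegenbauer factor $C_{n-k}^{(k+\g+\frac{d+1}{2},\b)}(\ft)$ is a polynomial in $\ft^2$ by \eqref{eq:gGegen}, hence a polynomial in $(\xb,t)$; and the homogeneity $Y_\ell^{k-2j}(\xb/\sqrt{1-\ft^2}) = (1-\ft^2)^{-(k-2j)/2}Y_\ell^{k-2j}(\xb)$ of the spherical harmonic in $\Pb_{\ell,j}^k(\Wb_\g)$ from \eqref{eq:basisBd} combines with $(1-\ft^2)^{k/2}$ to leave $(1-\ft^2)^{j}$ times a Jacobi polynomial evaluated at $2\|\xb\|^2/(1-\ft^2)-1$, which is again polynomial in $\|\xb\|^2$ and $\ft^2$ after clearing denominators. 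This exhibits $\Qb_{\ell,j,k}^n(\Wb_{\fa,\fb,\fc}^{\b,\g})$ as a polynomial of degree $n$ even in $t$, and in fact reproduces the explicit triangle form \eqref{eq:OP-k-abc} through \eqref{eq:gGegen}.

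Next I would establish orthogonality. For two such polynomials the integral identity of Lemma~\ref{lem:st-uv2} gives $\la \Qb_{\ell,j,k}^n(\Wb_{\fa,\fb,\fc}^{\b,\g}), \Qb_{\ell',j',k'}^{n'}(\Wb_{\fa,\fb,\fc}^{\b,\g})\ra_{\Wb_{\fa,\fb,\fc}^{\b,\g}} = \la \Qb_{\ell,j,k}^n(\Wb_\BB^{\b,\g}), \Qb_{\ell',j',k'}^{n'}(\Wb_\BB^{\b,\g})\ra_{\Wb_\BB^{\b,\g},\,\BB_+^{d+1}}$, the inner product over the upper half-ball. Because both factors are even in the last variable, integrating over $\BB_+^{d+1}$ is half the integral over $\BB^{d+1}$, so this equals $\tfrac12\la \Qb_{\ell,j,k}^n(\Wb_\BB^{\b,\g}), \Qb_{\ell',j',k'}^{n'}(\Wb_\BB^{\b,\g})\ra_{\Wb_\BB^{\b,\g},\,\BB^{d+1}}$, which vanishes unless $(\ell,j,k,n)=(\ell',j',k',n')$ by Proposition~\ref{prop:OP_Qb}. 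Running the same computation against an arbitrary polynomial $q$ of degree $<n$ that is even in $t$ shows $\Qb_{\ell,j,k}^n(\Wb_{\fa,\fb,\fc}^{\b,\g})\perp q$, since $q\circ\psi^{-1}$ is again an even polynomial of degree $<n$, using $\psi^{-1}(\ub,v)=\bigl(\ub,\sqrt{(\fb-\fa)v^2+\fa(1-\|\ub\|^2)+\fc\|\ub\|^2}\,\bigr)$, whose last coordinate squares to a polynomial; orthogonality to odd-in-$t$ polynomials is automatic by parity. Hence each $\Qb_{\ell,j,k}^n(\Wb_{\fa,\fb,\fc}^{\b,\g})$ lies in $\CV_n^\sE(\Wb_{\fa,\fb,\fc}^{\b,\g},\Lambda_{\fa,\fb,\fc}^{d+1})$ and the family is mutually orthogonal.

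Finally I would close the argument by a dimension count. The polynomiality computation applied to both $\psi$ and $\psi^{-1}$ shows that $f\mapsto f\circ\psi$ is a degree-preserving linear bijection between polynomials even in the last variable on the two domains; composed with the isometry of the previous paragraph it identifies $\CV_n^\sE(\Wb_\BB^{\b,\g},\BB^{d+1})$ with $\CV_n^\sE(\Wb_{\fa,\fb,\fc}^{\b,\g},\Lambda_{\fa,\fb,\fc}^{d+1})$. Since $\{\Qb_{\ell,j,k}^n(\Wb_\BB^{\b,\g}): n-k \text{ even}\}$ is an orthogonal basis of the former by Proposition~\ref{prop:OP_Qb}, its image under $\psi$ is an orthogonal basis of the latter, which is the assertion. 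Equivalently, the orthogonal family constructed above has cardinality $\sum_{m}\dim\CH_{n-2m}^d$, matching $\dim \CV_n^\sE$ from Proposition~\ref{prop:OP_fullSym}, so linear independence forces it to be a basis.
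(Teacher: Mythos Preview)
Your proposal is correct. The route differs modestly from the paper's: the paper starts from the basis \eqref{eq:OP-k-abc}, already established via Proposition~\ref{prop:OP_fullSym}, and rewrites it algebraically (using \eqref{eq:triOP}, \eqref{eq:gGegen}, the reindexing $n-2m=k-2j$, and the homogeneity of $Y_\ell^{k-2j}$) until it coincides with $\Qb_{\ell,j,k}^n(\Wb_\BB^{\b,\g})\circ\psi$; orthogonality and the basis property are thus inherited from \eqref{eq:OP-k-abc}, and no separate dimension count or appeal to Lemma~\ref{lem:st-uv2} is needed. You instead take $\Qb_{\ell,j,k}^n(\Wb_\BB^{\b,\g})\circ\psi$ as the starting point, verify polynomiality directly (your computation is the paper's identification run in reverse), and then prove orthogonality by pushing the inner product through the integral identity of Lemma~\ref{lem:st-uv2}, finishing with a dimension count. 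Your approach has the advantage of being self-contained once Lemma~\ref{lem:st-uv2} and Proposition~\ref{prop:OP_Qb} are in hand, bypassing the fully-symmetric machinery of Proposition~\ref{prop:OP_fullSym}; the paper's approach has the advantage of simultaneously exhibiting the explicit triangle form and making the parity restriction $n-k$ even (which you correctly flag, and which the paper's index set $0\le k\le n$ leaves implicit) drop out automatically from the reindexing.
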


\begin{proof}
By \eqref{eq:OP-k-abc} and \eqref{eq:triOP}, as well as \eqref{eq:gGegen}, a basis of 
$\VV_n^\sE(\Wb_{\fa,\fb,\fc}^{\b,\g}, \Lambda_{\fa,\fb,\fc}^{d+1})$ consists of 
\begin{align*}
   & \sT_{j,m}^{n-2m +\frac{d-2}{2},\b -\f12, \g} \left(\|\xb\|^2, \ft(\xb,t)^2 \right) Y_\ell^{n-2m} (\xb) \\
  & = \mathrm{const.} C_{2m-2j}^{2j+n-2m+\frac{d+1}{2}+\g,\b-\f12}(\ft) (1-\ft^2)^j P_j^{(\g, n-2m +\frac{d-2}{2})}
  \left (\frac{2\|\ub\|^2}{1-\ft^2} -1\right)Y_\ell^{n-2m} (\xb), 
\end{align*}  
where $\ft = \ft(\|\xb|,t)$ is used in the second line. Let $k$ be defined by $n-2m = k-2j$. Then the last expression
becomes, by the homogeneity of $Y_\ell^k$,  
\begin{align*}
 C_{n-k}^{(k + \frac{d+1} +\g,\b)}& (\ft)  (1-\ft^2)^{\frac{k}{2}}
     P_j^{(\g, k-2j+\frac{d-2}{2})}\left (\frac{2\|\xb\|^2}{1-\ft^2} -1\right)Y_\ell^{k-2j} \left(\frac{\xb}{\sqrt{1-\ft^2}}\right)  \\
  & =       C_{n-k}^{(k + \frac{d+1} +\g,\b)}(\ft) (1-\ft^2)^{\frac{k}{2}} \Pb_{\ell,j}^k \left(\Wb_\g; \frac{\xb}{\sqrt{1-\ft^2}}\right)
  = \Qb_{\ell, j,k}^n (\Wb_\BB^{\b,\g}; \xb, \ft),
\end{align*} 
where the first step follows by \eqref{eq:basisBd} and the second one by \eqref{eq:OP_Qb}. Recall $\ft = \ft(\|\xb\|,t)$. 
Applying \eqref{st-uv2} shows that $ \Qb_{\ell, j, k}^n \big(\Wb_\BB^{\b,\g}\big) \circ \psi$ consist of a
desired basis. 
\end{proof}

For $d =1$, this theorem reduces to Theorem \ref{prop:sQjn}. If $(\fa, \fb, \fc) = (0, 1, 0)$, then 
$\Lambda_{\fa,\fb,\fc}^{d+1} = \BB_+^{d+1}$ and $\Wb_{\fa,\fb,\fc}^{\b,\g} = \Wb_{\BB}^{\b,\g}$. Another 
special case is $(\fa, \fb,\fc) = (0,1,1)$, for which the domain $\Lambda_{\fa,\fb,\fc}^{d+1}$ becomes a circular cone
and was studied in \cite{X21a}, where the phenomenon that OPs even in $t$-variable are different from OPs odd in
$t$-variable could be completely different was first observed, and this case was further extended in \cite{X24} to more
general $\fa,\fb,\fc$. The relation in Theorem \ref{thm:QljnX} is new, which shows, in particular, OPs on 
$\Lambda_{\fa,\fb,\fc}^{d+1}$ are equivalent to those on the upper part of the unit ball, or OPs on the unit ball that are 
even in the $(d+1)$-th variable.  

As a consequence of Theorem \ref{thm:QljnX}, we obtain an analog of Theorem \ref{thm:PDE_d=2} that shows
OPs $\Qb_{\ell, j, k}^n\big(\Wb_{\fa,\fb,\fc}^{\b,\g} \big)$ are eigenfunctions of a second
order differential operator, which we give explicitly in the next theorem. 

\begin{thm} \label{thm:PDE_d}
For $\b \ge 0$ and $\g > -1$, the polynomials in $\CV_n^{\circ, \sE}(\Wb_{\fa,\fb,\fc}^{\b,\g}, \Lambda_{\fa,\fb,\fc}^{d+1})$ 
are eigenfunctions of a differential operator, 
\begin{equation} \label{eq:spectral-d}
    \fD_{\fa,\fb,\fc}^{\b,\g} Y = -n(n+ 2\b+2\g+ d+1) Y, \qquad 
         Y \in \CV_n^{\circ, \sE}\left(\Wb_{\fa,\fb,\fc}^{\b,\g}, \Lambda_{\fa,\fb,\fc}^{d+1}\right),
\end{equation} 
and the operator is given explicitly by, for $(\xb, t) \in \Lambda_{\fa,\fb,\fc}^{d+1}$, 
\begin{align*}
 \fD_{\fa,\fb,\fc}^{\b,\g} \, & = \Delta_\xb - \la \xb,\nabla_\xb\ra^2  + 
    \left [(\fa-\fc) (\fb-\fc)\|\xb\|^2 +(\fa - t^2)(-\fb+ t^2)\right] \frac{1}{t^2} \partial_t^2 \\
  & + 2\left (\frac{\fc}{t}- t\right) \partial_t  \la \xb,\nabla_\xb\ra
         -(2\b+2\g+d+1) \big(\la \xb,\nabla_\xb\ra + t \partial_t \big) -v \partial_t \\
   & +(2 \fb \b + 2 \fa \g + \fa + \fc d) \frac{1}{t} \partial_t
   + \left [\fa \fb + (\fa - \fc) (-\fb + \fc ) ( u1^2 + u2^2)\right] \frac{1}{t^3} \partial_t. 
 \end{align*}
Moreover, in view of the operator $\CD_\BB^{\b,\g}$ on the unit ball in \eqref{eq:D_BBd+1},  
\begin{equation} \label{eq:fD=Dd}
        \fD_{\fa,\fb,\fc}^{\b,\g} (f\circ \psi) = \left(\CD_\BB^{\b,\g} f \right) \circ \psi.   
\end{equation}
\end{thm}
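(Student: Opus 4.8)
The plan is to reduce the whole theorem to the single conjugation identity \eqref{eq:fD=Dd}, exactly as in the two--variable case of Theorem \ref{thm:PDE_d=2}. Once \eqref{eq:fD=Dd} is available, the eigenvalue assertion \eqref{eq:spectral-d} follows formally: by Theorem \ref{thm:QljnX} every element $Y$ of $\CV_n^\sE(\Wb_{\fa,\fb,\fc}^{\b,\g},\Lambda_{\fa,\fb,\fc}^{d+1})$ has the form $Y=f\circ\psi$ with $f\in\CV_n(\Wb_\BB^{\b,\g},\BB^{d+1})$ even in $y_{d+1}$, so \eqref{eq:fD=Dd} gives $\fD_{\fa,\fb,\fc}^{\b,\g}Y=(\CD_\BB^{\b,\g}f)\circ\psi$, and the ball eigen--relation following \eqref{eq:D_BBd+1} yields the eigenvalue $-n(n+2\b+2\g+d+1)$. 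A point to record first is that, since $f$ is even in $y_{d+1}$, the reflection $\s_{d+1}$ acts trivially on it, so the difference part of $\CD_\BB^{\b,\g}$ in \eqref{eq:D_BBd+1} vanishes and $\CD_\BB^{\b,\g}$ reduces on this subspace to the genuine differential operator $\Delta-\la\yb,\nabla\ra^2-(2\b+2\g+d+1)\la\yb,\nabla\ra+\frac{2\b}{y_{d+1}}\partial_{d+1}$; this is what makes $\fD_{\fa,\fb,\fc}^{\b,\g}$ a differential (not difference) operator.

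To establish \eqref{eq:fD=Dd} I would compute the push--forward of each constituent of $\CD_\BB^{\b,\g}$ under $\psi$ by the chain rule, mirroring the computation in Theorem \ref{thm:PDE_d=2}. Writing $F=f\circ\psi$, so that $F(\xb,t)=f(\xb,\ft(\|\xb\|,t))$ with $\ft$ from \eqref{eq:tuv}, the decisive structural fact is that $\psi$ fixes $\xb$ and only sends the last coordinate to $v=\ft(\|\xb\|,t)$, while $\ft$ depends on $\xb$ solely through $\|\xb\|^2$, so that $\partial_{x_i}\ft=\frac{(\fa-\fc)x_i}{(\fb-\fa)\ft}$ and $\partial_t\ft=\frac{t}{(\fb-\fa)\ft}$. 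Inverting the first--order chain rule gives $(\partial_v f)\circ\psi=\frac{(\fb-\fa)\ft}{t}\partial_t F$ and $(\partial_{x_i}f)\circ\psi=\partial_{x_i}F-\frac{(\fa-\fc)x_i}{t}\partial_t F$, the exact analogues of the two displayed first--order formulas in the proof of Theorem \ref{thm:PDE_d=2}. Summing $x_i$ times the second identity over $i$ and adding $v$ times the first collapses, using $(\fb-\fa)\ft^2=-\fa+(\fa-\fc)\|\xb\|^2+t^2$, to $\la\yb,\nabla\ra f\circ\psi=(\la\xb,\nabla_\xb\ra+(t-\tfrac{\fa}{t})\partial_t)F$; squaring this first--order operator then handles the $\la\yb,\nabla\ra^2$ term, and $\frac{2\b}{v}\partial_v$ becomes $\frac{2\b(\fb-\fa)}{t}\partial_t$.

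For the second--order part I would differentiate the two first--order identities once more to obtain $(\partial_{vv}f)\circ\psi$, $(\partial_{x_iv}f)\circ\psi$ and $(\partial_{x_ix_j}f)\circ\psi$ in terms of derivatives of $F$; because $\partial_{x_i}\ft$ is proportional to $x_i$, summing $\sum_i\partial_{x_ix_i}f$ organizes automatically into the rotation--invariant operators $\Delta_\xb$, $\la\xb,\nabla_\xb\ra$, $\la\xb,\nabla_\xb\ra^2$ and the cross operator $\partial_t\la\xb,\nabla_\xb\ra$. A useful simplification is that, writing $\xb=r\xi$ with $r=\|\xb\|$ and decomposing $\Delta_\xb=\partial_{rr}+\frac{d-1}{r}\partial_r+\frac{1}{r^2}\Delta_\sph$, the map $\psi$ leaves $r$ and the angular variable $\xi$ untouched, so the spherical part $\frac{1}{r^2}\Delta_\sph$ passes through $\psi$ unchanged and recombines with the transformed radial part to reconstitute the $\Delta_\xb$ appearing in $\fD_{\fa,\fb,\fc}^{\b,\g}$; this reduces the genuine work to a two--variable computation in $(r,t)$ essentially identical to the planar one. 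The main obstacle is purely the bookkeeping: assembling $\Delta-\la\yb,\nabla\ra^2-(2\b+2\g+d+1)\la\yb,\nabla\ra+\frac{2\b}{y_{d+1}}\partial_{d+1}$ from these pieces and collecting the many rational--in--$t$ coefficients (those with $\frac{1}{t}$, $\frac{1}{t^2}$ and $\frac{1}{t^3}$, weighted by $\|\xb\|^2$ and the polynomials in $\fa,\fb,\fc$) into precisely the stated form, where one must track the factor $\fb-\fa$ introduced at each inversion and repeatedly substitute $(\fb-\fa)\ft^2=-\fa+(\fa-\fc)\|\xb\|^2+t^2$ to clear $\ft$. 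Since $\psi$ preserves evenness in the last variable by Lemma \ref{lem:st-uv2}, no difference terms reappear, and the resulting $\fD_{\fa,\fb,\fc}^{\b,\g}$ is the claimed differential operator.
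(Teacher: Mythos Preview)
Your approach is correct and matches the paper's: both reduce everything to the conjugation identity \eqref{eq:fD=Dd}, invert the chain rule to express $\partial_v f$, $\partial_{x_i} f$ and their second-order combinations in terms of derivatives of $F=f\circ\psi$, and then assemble $\CD_\BB^{\b,\g}$ (with the reflection term dropped on even functions) to read off $\fD_{\fa,\fb,\fc}^{\b,\g}$. Your observation that the angular part $\tfrac{1}{r^2}\Delta_{\sph}$ passes through $\psi$ untouched, reducing the computation to the $(r,t)$ variables, is a clean organizational device that the paper does not make explicit; the paper instead writes out the full vector identities for $\Delta_d f$, $\la\ub,\nabla_d\ra f$, $\la\ub,\nabla_d\ra^2 f$ and $\partial_{d+1}\la\ub,\nabla_d\ra f$ directly and then simplifies.
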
 

\begin{proof}
The proof follows in line with the one for Theorem \ref{thm:QljnX}, which shows that we need to find a 
$\fD_{\fa,\fb,\fc}^\k$ that satisfies \eqref{eq:fD=Dd}. Let $\yb = (\ub, v) \in \BB_+^{d+1}$ with $\ub \in \BB^d$ and
$0\le v\le 1$. For $\yb \in \RR^{d+1}$, let $\Delta_d$ and $\nabla_d$ denote the operator acting on the first $d$ variable
and $\partial_{d+1}$ denote the partial derivative on the $d+1$ variable. Then, the operator $\CD_\BB^{\b,\g}$ can 
be written as 
\begin{align}\label{eq:CD_BBd}
\CD_\BB^{\b,\g} =  \Delta_d \, & - \la \ub, \nabla_d \ra^2 -2 v \partial_{d+1}  \la \ub, \nabla_d\ra
 - v^2 \partial_{d+1}^2 - v \partial_{d+1}\\
  & - (2 \beta +2 \gamma +d+1) \big( \la \ub, \nabla_d\ra + v \partial_{d+1}\big) + 2 \beta \frac1{v} \partial_{d+1}.  \notag
\end{align}
Let $F(\xb,t) = (f\circ \psi)(\xb,t) = f(\xb, \ft(\|\xb\|,t))$.  
Then, computing the derivatives of $F(\xb,t)$ and sovling backwards for the deirvatives of $f$ evaluated at $(\ub,v)$, 
we obtain, for example, 
\begin{align*}
  \Delta_d f\big(\ub, \ft(\|\ub\|, v)\big) \,& =  \Big [ \Delta_\ub - 2 (\fa-\fc)\frac1 v \partial_v \la \ub, \nabla_\ub \ra
  + (\fa-\fc)^2\|\ub\|^2 \frac{1}{v^2} \partial_v^2 \\
 \, & \quad  - \left ((\fa - \fc)\|\ub\|^2 + d v^2\right)\frac1 {v^3}  \partial_v \Big] F(\ub,v),  \\
 \la \ub, \nabla_d \ra f\big(\ub, \ft(\|\ub\|, v)\big) \,& = \Big [ \la \ub, \nabla_\ub\ra   -(a-c) \|\ub\|^2\frac{1}{v} \partial_v \Big]F(\ub,v), \\
\end{align*}
and, more involved,  
\begin{align*} 
  \partial_{d+1} \la \ub, \nabla_d\ra f\big(\ub, \ft(\|\ub\|, v)\big)
  \,&  = \bigg[ (\fb-\fa) \ft(\|\ub\|, v)\frac1 v \partial_v \la \ub, \nabla_\ub \ra  \\
      & \quad  - (\fa-\fc)(\fb-\fa) \ft(\|\ub\|, v) \|\ub\|^2 \left( \frac{1}{v^2} \partial_v^2 +\frac{1}{v^3} \partial_v \right) \bigg] F(\ub,v), \\
  \la \ub, \nabla_d \ra^2 f\big(\ub, \ft(\|\ub\|, v)\big) \,& =  \bigg[ \la \ub, \nabla_\ub \ra^2   
  - (\fa-\fc)\|\ub\|^2 \Big( \frac1 v \partial_v \la \ub, \nabla_\ub \ra +  \frac{\|\ub\|^2}{v^2} \partial_v^2 \Big) \\
  & \quad  -  (\fa-\fc)\|\ub\|^2 \left [ (\fa - \fc)\|\ub\|^2 + 2 v^2\right ]
   \frac1 {v^3}  \partial_v  \bigg] F(\ub,v).
\end{align*}
These computations are straightforward but tedious. Using them and other likewise identities on the right-hand side of \eqref{eq:CD_BBd}, we obtain, after a careful simplification, the stated result.
\end{proof} 

Let $\Pb_n^\sE(\Wb_{\fa,\fb,\fc}^{\b,\g})$ be the reproducing kernel of the space 
$\CV_n^\sE(\Wb_{\fa,\fb,\fc}^{\b,\g}, \Lambda_{\fa,\fb,\fc}^{d+1})$. By Lemma \ref{lem:st-uv2} and Theorem \ref{thm:QljnX},
it is easy to see that 
\begin{equation}\label{eq:ker=kerB}
   \Pb_n^\sE(\Wb_{\fa,\fb,\fc}^{\b,\g})\big((\xb,t), (\yb,s)\big) 
      =  \Pb_n^\sE\left(\Wb_\BB^{\b,\g}; \big( (\xb,\ft(\|\xb,r)), (\yb,\ft(\|\yb\|,s))\big)\right).  
\end{equation} 
A closed-form formula for this kernel follows immediately from the above relation and \eqref{eq:kernel_Bd}. We state
only the case $\b =0$ for simplicity. 

\begin{prop}
For $\g > -1$, $(\xb, t)$ and $(\yb,s)$ in $\Lambda_{\fa,\fb,\fc}^{d+1}$, 
\begin{align}\label{eq:kernel_d}
    \Pb_n^\sE \left(\Wb_{\fa,\fb,\fc}^{0,\g}; (\xb,t), (\yb,s) \right) = c_\g \int_{-1}^1  
       & Z_n^{\g+\frac{d+1}{2}} \big (\xi(\xb,t, \yb,s; v)\big) (1-v^2)^{\g-\f12} \d v,
\end{align}
where 
$$
 \xi(\xb,t, \yb,s; v) = \la X, Y \ra + v \sqrt{1-\|\xb\|^2 - \ft(\|\xb\|, t)^2}  \sqrt{1-\|\yb\|^2-\ft(\|\yb\|,s)^2}. 
$$
\end{prop}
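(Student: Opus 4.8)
The plan is to deduce the closed-form formula \eqref{eq:kernel_d} directly from the
corresponding addition formula on the unit ball \eqref{eq:kernel_Bd} via the kernel
transfer identity \eqref{eq:ker=kerB}, specialized to the case $\b = 0$. The whole
point is that no fresh integral computation is needed: the hard analytic work is
already contained in the ball formula, and the map $\psi$ of Lemma \ref{lem:st-uv2}
converts one into the other. First I would set $\b = 0$ in the ball formula
\eqref{eq:kernel_Bd}. In that limit the factor $(1+u)(1-u^2)^{\b-1}$ degenerates,
and by the limiting convention \eqref{eq:limit} the $u$-integration collapses onto
$u = 1$, so that the term $u\,ts$ in the argument of $Z_n^{\b+\g+\f{d+1}2}$ simply
becomes $ts$. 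Thus for $\b = 0$ the ball kernel reads
$$
\Pb_n^\sE\big(\Wb_\BB^{0,\g}; (\xb',t),(\yb',s)\big)
 = c_\g \int_{-1}^1 Z_n^{\g+\frac{d+1}{2}}
   \Big(\la \xb',\yb'\ra + t s + v\sqrt{1-\|(\xb',t)\|^2}\sqrt{1-\|(\yb',s)\|^2}\Big)
   (1-v^2)^{\g-\f12}\,\d v,
$$
a single integral over $v$ with the Gegenbauer-type kernel $Z_n^{\g+\frac{d+1}2}$.

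Next I would substitute the transfer identity \eqref{eq:ker=kerB} into the left-hand
side of the desired formula. Writing $\xb$ for the $\RR^d$-part and $t$ for the last
variable on $\Lambda_{\fa,\fb,\fc}^{d+1}$, identity \eqref{eq:ker=kerB} says the
kernel on the curved domain equals the ball kernel evaluated at the points
$\psi(\xb,t) = (\xb,\ft(\|\xb\|,t))$ and $\psi(\yb,s)=(\yb,\ft(\|\yb\|,s))$. In the
notation of the $\b=0$ ball formula above, this means the $\RR^d$-part $\xb'$ becomes
$\xb$, the last coordinate $t$ becomes $\ft(\|\xb\|,t)$, and similarly for the second
argument. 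Therefore $\la\xb',\yb'\ra$ becomes $\la\xb,\yb\ra$ and the product $ts$
becomes $\ft(\|\xb\|,t)\,\ft(\|\yb\|,s)$, so that the first two terms in the argument
of $Z_n^{\g+\frac{d+1}2}$ combine into the full inner product
$\la(\xb,\ft),(\yb,\ft)\ra$, which I would denote $\la X,Y\ra$ as in the statement.
The radial factor becomes $\sqrt{1-\|\xb\|^2-\ft(\|\xb\|,t)^2}$ and its counterpart,
matching exactly the defined quantity $\xi(\xb,t,\yb,s;v)$. Collecting these pieces
reproduces \eqref{eq:kernel_d} verbatim.

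The only genuine step requiring care is the $\b\to 0$ limit in the $u$-variable, and
I would justify it by appealing to the limiting convention \eqref{eq:limit} already
established for these formulas; the identity \eqref{eq:kernel_Bd} is explicitly stated
to hold under that limit when $\b = 0$. I expect this to be the main (and only real)
obstacle, since everything else is a direct relabeling of variables dictated by $\psi$
and the inner-product structure of $\BB^{d+1}$; in particular the normalization
constant $c_\g$ and the weight $(1-v^2)^{\g-\f12}$ carry over unchanged because the
$v$-integration is untouched by the transfer map. One subtlety worth a sentence is to
confirm that the composite point $(\xb,\ft(\|\xb\|,t))$ indeed lies in $\BB_+^{d+1}$,
so that the radial square roots are real and the ball formula applies; this is exactly
the content of the bijection in Lemma \ref{lem:st-uv2}, so it may simply be cited. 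With
the limit and the bijection in hand, the proof is a one-line substitution, which is
precisely the economy the paper is advertising.
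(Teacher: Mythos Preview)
Your proposal is correct and follows exactly the route the paper indicates: the paper does not give a formal proof of this proposition but states just before it that the formula ``follows immediately from the above relation [\eqref{eq:ker=kerB}] and \eqref{eq:kernel_Bd}'', and you carry out precisely that two-step argument—specializing the ball addition formula to $\b=0$ via the limit \eqref{eq:limit} (so the $(1+u)(1-u^2)^{\b-1}$ factor forces $u=1$), and then composing with $\psi$ from Lemma~\ref{lem:st-uv2}. One small remark: although \eqref{eq:ker=kerB} is written with $\Pb_n^\sE$ on the ball side, the basis in Theorem~\ref{thm:QljnX} is the \emph{full} ball basis $\Qb_{\ell,j,k}^n(\Wb_\BB^{\b,\g})$ of Proposition~\ref{prop:OP_Qb}, so what actually appears on the right is the full kernel $\Pb_n(\Wb_\BB^{\b,\g})$; this is consistent with your computation and with the interpretation $X=\psi(\xb,t)$, $Y=\psi(\yb,s)$ that you correctly adopt.
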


We state the special case $(\fa,\fb,\fc) = (0,1,1)$ as an example for better reference. 
\begin{exam} \label{ex:cone}
 The circular cone is the domain 
$$
   \VV^{d+1}=  \Lambda_{0,1,1}^{d+1} = \{(\xb,t): \|\xb\| \le t, \quad 0 \le t \le 1\} 
$$
equipped with the weight function  
$$
   \Wb^{\b,\g}(\xb,t) = \Wb^{\b,\g}_{0,1,1} (\xb,t) = |t| (t^2-\|x\|^2)^{\b-\f12} (1-t)^\g.
$$
This case was studied in \cite{X21a}, where the parameters are $(\mu, \g)$, which corresponds to 
$(\b, \g-\f12)$ in our notation. The differential operator in Theorem \ref{thm:PDE_d} is reduced to 
\begin{align*}
 \fD_{0,1,1}^{\b,\g}  = \Delta_\xb\, & - \la \xb,\nabla_\xb\ra^2  + (1- t^2) \partial_t^2 
         +(1- t^2) \frac{2}{t} \partial_t  \la \xb,\nabla_\xb\ra \\   
     &   -(2\b+2\g+d+1) \big(\la \xb,\nabla_\xb\ra + t \partial_t \big) -v \partial_t  +(2 \fb + d) \frac{1}{t} \partial_t,
\end{align*}
which is Theorem 4.6 in \cite{X21a}. In this case, $\ft(\|\xb\|,t) = \sqrt{t^2 - \|\xb\|^2}$ and the addition formula 
\eqref{eq:kernel_d} is derived in \cite[Theorem 5.5]{X21a}.
\end{exam}
 
\subsection{Approximation on $\Lambda_{\fa,\fb,\fc}^{d+1}$}
As in the case of $d =2$ discussed in the previous section, much of the analysis on the domain
$\Lambda_{\fa,\fb,\fc}^{d+1}$ can be deduced from the corresponding results on the unit disk.

The orthogonal projection operator 
$$
 \proj_n^{\circ, \sE}\left( \Wb_{\fa,\fb,\fc}^{\b,\g}\right):  L^2\left(\Wb_{\fa,\fb,\fc}^{\b,\g}, \Lambda_{\fa,\fb,\fc}^{d+1}\right)
 \to  \CV_n^\sE\left(\Wb_{\fa,\fb,\fc}^{\b,\g}, \Lambda_{\fa,\fb,\fc}^{d+1}\right)
$$
is an integral operator, for $X = (\xb,t)$ and $Y =(\yb,s)$ in $ \Lambda_{\fa,\fb,\fc}^{d+1}$,
$$
 \proj_n^{\circ, \sE}\left( \Wb_{\fa,\fb,\fc}^{\b,\g}; X \right) =
 c_{\b,\g} \int_{\Lambda_{\fa,\fb,\fc}} f(Y) \Pb_n^\sE \left(\Wb_{\fa,\fb,\fc}^{\b,\g}; X, Y\right)
 \Wb_{\fa,\fb,\fc}^{\b,\g}(Y) \d Y.
$$
Extending $f$ on $\Lambda_{\fa,\fb,\fc}^{d+1}$ to the domain $\XX_{\fa,\fb,\fc}^{d+1}$ evenly by
defining $f(\xb, -t) = f(\xb,t)$, we obtain an analog of Theorem \ref{thm:Fourier}. 

\begin{thm} \label{thm:Fourier-d}
Let $\b\ge 0$ and $\g > -1$. If $f \in  L^2\big(\Wb_{\fa,\fb,\fc}^{\b,\g}, \Lambda_{\fa,\fb,\fc}^{d+1}\big)$, then 
\begin{equation}\label{eq:Fourier-d}
        f = \sum_{n=0}^\infty \proj_n^{\circ, \sE}\left( \Wb_{\fa,\fb,\fc}^{\b,\g}, f\right). 
\end{equation}
\end{thm}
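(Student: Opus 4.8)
The plan is to mirror the argument used for Theorem~\ref{thm:Fourier} in the two-variable case, replacing the reflection in the second planar variable by the reflection $t\mapsto -t$ on $\XX_{\fa,\fb,\fc}^{d+1}$ and invoking the $L^2$-completeness of the full orthogonal polynomial system on this fully symmetric domain. Since the weight satisfies $\Wb_{\fa,\fb,\fc}^{\b,\g}(\xb,-t)=\Wb_{\fa,\fb,\fc}^{\b,\g}(\xb,t)$, the space $\CV_n(\Wb_{\fa,\fb,\fc}^{\b,\g},\XX_{\fa,\fb,\fc}^{d+1})$ splits orthogonally into the subspace $\CV_n^\sE$ of polynomials even in $t$ and its complement of polynomials odd in $t$.

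First I would record the analog of the kernel identity \eqref{eq:PnBE1}. Writing $\Pb_n$ for the full reproducing kernel of $\CV_n(\Wb_{\fa,\fb,\fc}^{\b,\g},\XX_{\fa,\fb,\fc}^{d+1})$, the averaged kernel
$$
\tfrac12\big[\Pb_n(\Wb_{\fa,\fb,\fc}^{\b,\g}; (\xb,t),(\yb,s)) + \Pb_n(\Wb_{\fa,\fb,\fc}^{\b,\g}; (\xb,t),(\yb,-s))\big]
$$
annihilates the odd-in-$t$ part of $\CV_n$ and reproduces the even part, and hence equals $\Pb_n^\sE(\Wb_{\fa,\fb,\fc}^{\b,\g}; (\xb,t),(\yb,s))$, the reproducing kernel of $\CV_n^\sE$. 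That the even/odd splitting is orthogonal and exhaustive in this $(d+1)$-dimensional setting is guaranteed by the explicit bases of Proposition~\ref{prop:OP_fullSym} and Theorem~\ref{thm:QljnX}, which depend on $t$ only through $t^2$ in the even case.

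Second, given $f\in L^2(\Wb_{\fa,\fb,\fc}^{\b,\g},\Lambda_{\fa,\fb,\fc}^{d+1})$, I would extend it evenly to $\XX_{\fa,\fb,\fc}^{d+1}$ by setting $f(\xb,-t):=f(\xb,t)$; since the weight is even in $t$, the extended $f$ lies in $L^2(\Wb_{\fa,\fb,\fc}^{\b,\g},\XX_{\fa,\fb,\fc}^{d+1})$. Using the kernel identity together with the evenness of $f$, a change of variable $s\mapsto -s$ in the second integral shows that $\proj_n^{\circ,\sE}(\Wb_{\fa,\fb,\fc}^{\b,\g};f)=\proj_n(\Wb_{\fa,\fb,\fc}^{\b,\g};f)$ for the extended function, exactly as in the planar case.

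Finally, because $\Wb_{\fa,\fb,\fc}^{\b,\g}$ is a regular (finite-moment) weight on the bounded domain $\XX_{\fa,\fb,\fc}^{d+1}$ for $\b\ge 0$ and $\g>-1$, the full system $\{\CV_n(\Wb_{\fa,\fb,\fc}^{\b,\g},\XX^{d+1})\}_{n\ge0}$ is complete in $L^2(\Wb_{\fa,\fb,\fc}^{\b,\g},\XX^{d+1})$, so the Fourier orthogonal expansion $f=\sum_n\proj_n(\Wb_{\fa,\fb,\fc}^{\b,\g};f)$ holds for the extended $f$. Restricting this identity back to $\Lambda_{\fa,\fb,\fc}^{d+1}$ and using $\proj_n=\proj_n^{\circ,\sE}$ for even $f$ yields \eqref{eq:Fourier-d}. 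I expect the main obstacle to be the first step: one must verify carefully that the averaged kernel really is the reproducing kernel of $\CV_n^\sE$, which hinges on the orthogonality between the even and odd subspaces in the $t$-variable; this is precisely what the explicit $t^2$-dependence of the bases in Proposition~\ref{prop:OP_fullSym} and Theorem~\ref{thm:QljnX} provides.
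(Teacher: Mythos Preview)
Your proposal is correct and follows essentially the same route the paper indicates: the paper does not give a separate proof of Theorem~\ref{thm:Fourier-d} but simply notes, just before the statement, that extending $f$ evenly in $t$ to $\XX_{\fa,\fb,\fc}^{d+1}$ yields an analog of Theorem~\ref{thm:Fourier}. Your three steps---the averaged-kernel identity analogous to \eqref{eq:PnBE1}, the even extension, and the reduction $\proj_n^{\circ,\sE}=\proj_n$ via the change $s\mapsto -s$---are exactly the ingredients of that earlier proof transported to the $(d+1)$-dimensional setting.
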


Let $\Pi_n^{\circ, \sE}$ denote the space of polynomials at most $n$ in $d+1$ variables that are even in 
the last variable. Then 
$$
   \Pi_n^{\circ,\sE} = \bigoplus_{k=0}^n \CV_k\left(\Wb_{\fa, \fb, \fc}^{\b,\g},\Lambda_{\fa, \fb, \fc}^{d+1}\right). 
$$
For $f \in L^p(\Wb_{\fa, \fb, \fc}^{\b,\g},\Lambda_{\fa, \fb, \fc}^{d+1})$, $1 \le p < \infty$ and 
$f \in C(\Lambda_{\fa, \fb,\fc}^{d+1})$ if $p = \infty$, we define  
$$
   E_n(f)_{L^p\big(\sW_{\fa, \fb, \fc}^{\b,\g},\Lambda_{\fa, \fb, \fc}^{d+1}\big)}
       = \inf_{g \in\Pi_n^{\circ,\sE}} \|f - g\|_{L^p\big(\sW_{\fa, \fb, \fc}^{\b,\g},\Lambda_{\fa, \fb, \fc}^{d+1}\big)},
$$
where the space becomes $C(\Lambda_{\fa,\fb,\fc}^{d+1})$ if $p = \infty$, as in the definition in the Section 3.2, 
and we also define an analog of the $K$-functional by, for $r \in \NN$ and $\rho > 0$,  
$$
\Kb_r(f; \rho)_{p,\Wb_{\fa,\fb,\fc}^{\b,\g}} = 
  \inf_{g} \left\{ \|f - g\|_{L^p\big(\Wb_{\fa, \fb, \fc}^{\b,\g},\Lambda_{\fa, \fb, \fc}^{d+1}\big)} + 
  \rho^r \left \|\big (\fD_{\fa,\fb,\fc}^{\b,\g}\big )^\f r 2 g \right\|_{L^p\big(\Wb_{\fa, \fb, \fc}^{\b,\g},\Lambda_{\fa, \fb, \fc}^{d+1}\big)} \right \}, 
$$
where the infimum is taken over $g \in C^r(\Lambda_{\fa,\fb,\fc}^{d+1})$. Then the following analog of 
Theorem \ref{main-thmV0} holds. 

\begin{thm} \label{thm:approx-d}
Let $\b \ge 0$, $\g> -1$, and $f \in L^p(\Lambda_{\fa,\fb,\fc}^{d+1},\Wb_{\fa,\fb,\fc}^{\b,\g})$ if $1 \le p < \infty$, 
and $f \in C(\Lambda_{\fa,\fb,\fc})$ if $p = \infty$. Then, for $r \in \NN$ and $n =1,2,\ldots$, there hold 
\begin{enumerate} [\rm (i)]
\item {\it direct theorem}:
$$
    E_n(f)_{L^p\big(\Wb_{\fa, \fb, \fc}^{\b,\g},\Lambda_{\fa, \fb, \fc}^{d+1}\big)} 
        \le c \, K_r(f;  n^{-1})_{p, \Wb_{\fa, \fb, \fc}^{\b,\g}} ;
$$
\item {\it inverse theorem}:  
$$
   K_r(f; n^{-1})_{p, \Wb_{\fa, \fb, \fc}^{\b,\g}} 
         \le c \,n^{-r} \sum_{k=0}^n (k+1)^{r-1} E_k(f)_{L^p(\Wb_{\fa, \fb, \fc}^{\b,\g},\Lambda_{\fa, \fb, \fc}^{d+1})}.
$$
\end{enumerate}
\end{thm}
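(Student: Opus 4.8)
The plan is to mirror the proof of Theorem~\ref{main-thmV0}, transporting the whole problem from $\Lambda_{\fa,\fb,\fc}^{d+1}$ to the upper half-ball $\BB_+^{d+1}$ through the bijection $\psi$ of Lemma~\ref{lem:st-uv2}, and then reading off the conclusion from the known direct and inverse estimates on the ball. First I would record the corresponding result on $\BB^{d+1}$ for the weight $\Wb_\BB^{\b,\g}$: by the general theory for reflection-invariant weights on the ball in \cite{X05}, the error $E_n(\cdot)_{L^p(\Wb_\BB^{\b,\g},\BB^{d+1})}$ is characterized by the $K$-functional built from the differential-difference operator $\CD_\BB^{\b,\g}$ of \eqref{eq:D_BBd+1}, with both inequalities of exactly the stated shape. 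This is the only external input; everything else is a transfer argument along $\psi$.

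Next I would reduce to functions even in the last variable. Since both $\Wb_\BB^{\b,\g}$ and $f\circ\psi^{-1}$ are even in $y_{d+1}$, a best approximant $g\in\Pi_n$ may be replaced by $\tfrac12(g+g\circ\s_{d+1})$ without increasing the error, so $E_n$ is unchanged when we restrict to $\Pi_n^{\circ,\sE}$ and to $\BB_+^{d+1}$. On such even functions the difference part of $\CD_\BB^{\b,\g}$ annihilates, so that operator reduces to a genuine differential operator; this matches the purely differential operator $\fD_{\fa,\fb,\fc}^{\b,\g}$ of Theorem~\ref{thm:PDE_d}, and the two $K$-functionals are then built from operators related by \eqref{eq:fD=Dd}.

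The heart of the transfer is that $\psi$ carries $\Pi_n^{\circ,\sE}$ on $\BB_+^{d+1}$ bijectively and degree-preservingly onto $\Pi_n^{\circ,\sE}$ on $\Lambda_{\fa,\fb,\fc}^{d+1}$. Indeed, a polynomial $g$ even in its last variable is a polynomial in $(\ub,v^2)$, and since $\ft(\|\xb\|,t)^2=\frac{-\fa+(\fa-\fc)\|\xb\|^2+t^2}{\fb-\fa}$ is a polynomial in $(\xb,t)$, the composition $g\circ\psi$ is again a polynomial, even in $t$, of the same total degree; the converse direction is identical using the explicit inverse $\psi^{-1}(\ub,v)=\big(\ub,\sqrt{(\fb-\fa)v^2+\fa(1-\|\ub\|^2)+\fc\|\ub\|^2}\,\big)$. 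Combined with the weighted integral identity of Lemma~\ref{lem:st-uv2}, this yields $E_n(f)_{L^p(\Wb_{\fa,\fb,\fc}^{\b,\g},\Lambda_{\fa,\fb,\fc}^{d+1})}=E_n(f\circ\psi^{-1})_{L^p(\Wb_\BB^{\b,\g},\BB^{d+1})}$. For the $K$-functionals I would apply \eqref{eq:fD=Dd} to $g\circ\psi^{-1}$ to obtain $\fD_{\fa,\fb,\fc}^{\b,\g}g=(\CD_\BB^{\b,\g}(g\circ\psi^{-1}))\circ\psi$; since the fractional powers are defined spectrally through the projections, and these projections correspond under $\psi$ by the kernel relation \eqref{eq:ker=kerB}, the same integral identity gives $K_r(f;\rho)_{p,\Wb_{\fa,\fb,\fc}^{\b,\g}}=K_r(f\circ\psi^{-1};\rho)_{p,\Wb_\BB^{\b,\g}}$. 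The two displayed inequalities then follow verbatim from the ball estimates.

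I expect the main obstacle to be the careful bookkeeping that $\psi$ preserves polynomial degree and smoothness classes \emph{simultaneously} with the spectral identification of the fractional operators $(\fD_{\fa,\fb,\fc}^{\b,\g})^{r/2}$ and $(\CD_\BB^{\b,\g})^{r/2}$ --- in particular, verifying that restricting to even functions legitimately deletes the difference part of the $K$-functional on the ball, so that the purely differential operator $\fD_{\fa,\fb,\fc}^{\b,\g}$ genuinely suffices on $\Lambda_{\fa,\fb,\fc}^{d+1}$. Once these identifications are secured, the direct and inverse theorems are immediate consequences of their counterparts on $\BB^{d+1}$.
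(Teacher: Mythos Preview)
Your proposal is correct and follows essentially the same approach as the paper: the paper simply states that the definitions and the proof are verbatim extensions of those in the two-variable case (Theorem~\ref{main-thmV0}), and you have accurately reconstructed that argument in the $(d+1)$-dimensional setting via the bijection $\psi$ of Lemma~\ref{lem:st-uv2}, the intertwining relation \eqref{eq:fD=Dd}, and the ball result from \cite{X05}. If anything, your write-up is more detailed than the paper's, which offers no explicit proof beyond the reference back to Section~3.2.
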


The definitions of the two quantities in the theorem are verbatim extensions of the one in Section 3.2, and 
so is the proof of the theorem. In the case of the circular cone in Example \ref{ex:cone}, this theorem is established
in \cite{X23a}, where the proof follows the general framework developed in \cite{X21} for spaces of homogeneous 
type that possess highly localized kernels, so that the main task in \cite{X23a} reduces to establishing highly 
localized kernels. The latter approach also applies on $\Lambda_{\fa,\fb,\fc}^{d+1}$ for $\Wb_{\fa,\fb,\fc}^{0, \g}$.

Let $\wh a \in C^\infty(\RR_+)$ be a non-negative function on the real line such that
$$
    \wh a(t) =1 \,\,  \hbox{if $0 \le t \le 1$}, \quad \hbox{and} \quad \wh a(t) = 0  \,\,\hbox{if $t \ge 2$}.
$$ 
For $X, Y \in \Lambda_{\fa,\fb,\fc}^{d+1}$ and $\g > -1$, define the kernel
$$
L_n \left( \Wb_{\fa,\fb,\fc}^{0, \g}; X, Y\right) = \sum_{k=0}^{2n} \wh a \left (\f k n \right) \Pb_k \left( \Wb_{\fa,\fb,\fc}^{0, \g}; X,Y\right).
$$
By \eqref{eq:ker=kerB} and the corresponding result on the unit ball \cite{PX}, this kernel is highly localized in the sense 
that it decays almost exponentially if $X$ and $Y$ are away from the diagonal. The precise statement requires the 
distance function on the domain $\Lambda_{\fa,\fb,\fc}^{d+1}$ defined by
$$
  \d_{\Lambda_{\fa,\fb,\fc}}(X,Y) = \arccos \left( \la X, Y\ra + \sqrt{1-\|\xb\|^2- \ft(\|\xb\|, t)^2}\sqrt{1-\|\yb\|^2- \ft(\|\yb\|, s)^2} \right) 
$$
for $X = (\xb,t), Y = (\yb,s)\in \Lambda_{\fa,\fb,\fc}^{d+1}$, which follows from the mapping \eqref{st-uv2} and the distance 
function $\d_\BB(X,Y) = \arccos \left( \la X, Y\ra + \sqrt{1-\|X\|^2}\sqrt{1-\|Y\|^2}\right)$  of $\BB^{d+1}$. The 
inequalities that quantify the localization follow from the corresponding inequalities on the unit ball by the change 
of variables, which we leave to the interested readers. For the case of the circular cone $\Lambda_{0,1,1}^{d+1}$, 
the inequalities were established by a fairly long estimate in \cite{X23a}, which can now be avoided by appealing to the 
corresponding inequalities for the unit ball. 

The $K$-functional defined by the spectral operator in Theorem \ref{thm:approx-d} has an equivalent modulus of 
smoothness, defined as a multiplier operator \cite[Definition 3.9]{X21}, which can also be migrated to our setting.
We shall not go in this direction since it involves another set of definitions and notations, and the result is essentially 
a mapping away from the results on the unit ball. Instead, we discuss an analog of Theorem \ref{thm:approx-d}
that uses another $K$-functional defined when $\b = 0$. 

To keep the notation simple, we consider the case $\fa =0$ and $\fb =1$ and denote 
$$
   \Lambda_\fc^{d+1} =  \Lambda_{0, 1, \fc}^{d+1} \quad \hbox{and}\quad \Wb_{\fc}^\g = \Wb_{0, 1,\fc}^{0,\g}. 
$$
With $\b = 0$, the weight function $\Wb_{\fc}^{\g}(\xb,t)$ is rotationally invariant in $\xb$. We introduce 
the angular derivatives $D_{i,j}$ defined by, for $\xb \in \RR^d$, 
$$
        D_{i,j} = x_i \partial_j - x_j \partial_i, \qquad 1 \le i,j \le d,
$$
which is the angular derivative in the polar coordinates of the $(x_i,x_j)$ plane. Let 
$$
  \phi_c(\xb,t) = \sqrt{1-(1-\fc) t^2 - \|\xb\|^2}
$$
for $(\xb, t) \in \Lambda_\fc^{d+1}$. We further define, as $\ft(\|\xb\|,t) = \sqrt{t^2- \fc \|\xb\|^2}$ when $\fa =0$ and $\fb =1$, 
$$
 \fD_i  = \partial_{x_i} + \fc \frac{x_i}{t} \partial_t, \quad 1 \le i\le d, \quad \hbox{and}
     \quad \fD_{d+1} =  \frac{\sqrt{t^2- \fc \|\xb\|^2}}{t}  \partial_t.
$$
It follows readily 
$$
  \fD_{i,j} : = x_i \fD_j - x_j \fD_i = x_i \partial_{x_j} - x_j \partial_{x_i} = D_{i,j}, \qquad 1 \le i, j \le d,
$$
and, furthermore, 
$$
   \fD_{i,d+1} := x_i \fD_{d+1} - \ft(\|\xb\|,t) \fD_i = -  \frac{\sqrt{t^2- \fc \|\xb\|^2}}{t} \left(t \partial_{x_i} - (1-c) x_i \partial_t \right), \quad 1 \le i \le d. 
$$

With these notations, we then define our second $K$-functional by 
\begin{align*}
 \wh K_r(f; \rho)_{p, \Wb_{\fc}^{\g}}  = \inf_{g \in C^r(\Lambda_\fc^{d+1})}
     \Big \{ \| f-g\|_{L^p(\Wb_{\fc}^\g, \Lambda_{\fc}^{d+1})} 
      &  + \max_{1\le i,j\le d+1} \left \| \fD_{i,j}^r g \right \|_{L^p(\Wb_{\fc}^\g, \Lambda_{\fc}^{d+1})} \\
  & + \max_{1\le i \le d+1} \left \| \phi_\fc^r \fD_i^r g \right\|_{L^p(\Wb_{\fc}^\g, \Lambda_{\fc}^{d+1})}   \Big \}.  
\end{align*}
This $K$ functional can also be used to give a characterization of the best approximation by polynomials. 

\begin{thm} \label{thm:approx-d-2}
Let $\g> -1$, and $f \in L^p(\Lambda_{\fc}^{d+1},\Wb_{\fc}^\g)$ if $1 \le p < \infty$, 
and $f \in C(\Lambda_{\fa,\fb,\fc})$ if $p = \infty$. Then, for $r \in \NN$ and $n =1,2,\ldots$, there hold 
\begin{enumerate} [\rm (i)]
\item {\it direct theorem}:
$$
    E_n(f)_{L^p(\Wb_{\fc}^{\g},\Lambda_{\fc}^{d+1})} 
        \le c \, \wh K_r(f;  n^{-1})_{p, \Wb_{\fc}^\g} +n^{-r} \|f\|_{L^p(\Wb_{\fc}^{\g},\Lambda_{\fc}^{d+1})};
$$
\item {\it inverse theorem}:  
$$
   \wh K_r(f; n^{-1})_{p, \Wb_{\fc}^{\g}} 
         \le c \,n^{-r} \sum_{k=0}^n (k+1)^{r-1} E_k(f)_{L^p(\Wb_{\fc}^{\g},\Lambda_{\fc}^{d+1})}.
$$
\end{enumerate}
\end{thm}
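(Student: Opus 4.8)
The plan is to transfer the known direct and inverse theorems for weighted polynomial approximation on the unit ball to $\Lambda_\fc^{d+1}$ through the bijection $\psi$ of Lemma~\ref{lem:st-uv2}, in the same spirit as the proof of Theorem~\ref{thm:approx-d}, except that here the $K$-functional is built from the first-order operators $\fD_{i,j}$ and $\fD_i$ rather than from the spectral operator. Since $\b=0$, the relevant weight on $\BB^{d+1}$ is the classical $W_\g(\yb)=(1-\|\yb\|^2)^\g$, and the corresponding theory there is characterized by the $K$-functional assembled from the angular derivatives $D_{i,j}=y_i\partial_{y_j}-y_j\partial_{y_i}$ and the weighted Euclidean derivatives $\varphi^r\partial_{y_i}^r$, with $\varphi(\yb)=\sqrt{1-\|\yb\|^2}$; the direct theorem (carrying the lower-order term $n^{-r}\|f\|$) and the matching inverse theorem in precisely this form are available in \cite{DaiX}. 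It therefore suffices to show that, under $f\mapsto f\circ\psi^{-1}$, both the error of best approximation and the $K$-functional $\wh K_r$ on $\Lambda_\fc^{d+1}$ coincide with their counterparts on $\BB^{d+1}$.

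The heart of the matter is that $\psi$ conjugates the first-order operators on $\Lambda_\fc^{d+1}$ into the standard ones on $\BB^{d+1}$. Writing $F=f\circ\psi$ and using $\partial_t\ft=t/\ft$ and $\partial_{x_j}\ft=-\fc\,x_j/\ft$ for $\ft=\ft(\|\xb\|,t)=\sqrt{t^2-\fc\|\xb\|^2}$, a direct chain-rule computation shows that the $\fc\frac{x_i}{t}\partial_t$ correction built into $\fD_i$ exactly cancels the chain-rule term produced by $\partial_{x_i}\ft$, yielding
$$
  \fD_i(f\circ\psi)=(\partial_{y_i}f)\circ\psi, \quad 1\le i\le d+1, \qquad \fD_{i,j}(f\circ\psi)=(D_{i,j}f)\circ\psi, \quad 1\le i,j\le d+1,
$$
together with $\phi_\fc=\varphi\circ\psi$, whence $\phi_\fc^r\fD_i^r(f\circ\psi)=(\varphi^r\partial_{y_i}^rf)\circ\psi$. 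Since each identity is valid for an arbitrary function, it iterates to every order $r$, so $\fD_i^r$ and $\fD_{i,j}^r$ push forward to $\partial_{y_i}^r$ and $D_{i,j}^r$. Combined with the measure-preserving identity of Lemma~\ref{lem:st-uv2}, which makes $f\mapsto f\circ\psi^{-1}$ an isometry of $L^p(\Wb_\fc^\g,\Lambda_\fc^{d+1})$ onto $L^p(W_\g,\BB^{d+1})$, these give
$$
  \wh K_r(f;\rho)_{p,\Wb_\fc^\g}=\wh K_r\big(f\circ\psi^{-1};\rho\big)_{p,W_\g}.
$$

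It remains to match the best-approximation errors, which reduces to the even-in-$t$ symmetry exactly as in the proofs of Theorems~\ref{main-thmV0} and~\ref{thm:approx-d}. Because $\ft^2=t^2-\fc\|\xb\|^2$ is a polynomial in $(\xb,t)$ that is even in $t$, a polynomial on $\BB^{d+1}$ even in $y_{d+1}$ pulls back under $\psi$ to a polynomial in $(\xb,t)$ even in $t$ of the same degree, and conversely; hence $\Pi_n^{\circ,\sE}$ on $\Lambda_\fc^{d+1}$ corresponds to the even-in-$y_{d+1}$ polynomials of degree $n$ on $\BB^{d+1}$. Symmetrizing the competitors in both the approximation error and $\wh K_r$ lets us pass from $\BB_+^{d+1}$ to the full ball restricted to even functions, so $E_n(f)_{L^p(\Wb_\fc^\g,\Lambda_\fc^{d+1})}=E_n(f\circ\psi^{-1})_{L^p(W_\g,\BB^{d+1})}$, and the two stated inequalities follow termwise from the ball versions, the lower-order term $n^{-r}\|f\|$ being inherited directly. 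The main obstacle is the conjugation step: one must verify that the $\partial_t$-contributions in $\fD_i$ and in the chain rule cancel so that $\fD_i$ maps precisely to $\partial_{y_i}$ (rather than to $\partial_{y_i}$ plus a first-order error), and that $\phi_\fc$ is exactly the pullback $\varphi\circ\psi$; once these are checked, the remainder of the transfer is routine.
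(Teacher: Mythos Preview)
Your proposal is correct and follows essentially the same route as the paper: one verifies that $\fD_i$ is the $\psi$-conjugate of $\partial_{y_i}$ (and hence $\fD_{i,j}$ of $D_{i,j}$), that $\phi_\fc=\varphi\circ\psi$, and then uses the measure-preserving identity of Lemma~\ref{lem:st-uv2} together with the even-in-$t$ symmetrization argument of Theorem~\ref{main-thmV0} to reduce both $E_n$ and $\wh K_r$ to their counterparts on $\BB^{d+1}$, where the Dai--Xu direct and inverse theorems apply. Your write-up is in fact more explicit than the paper's about the chain-rule cancellation and the polynomial correspondence, but the underlying argument is identical.
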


\begin{proof}
If we replace $\fD_i$ by $\partial_i$, $\phi_\fc$ by $\phi(\yb) = \sqrt{1-\|\yb|^2}$, and the norm by
the norm of $L^p(\Wb_\BB^\g, \BB^{d+1})$ with $\Wb(\yb) = (1-\|\yb\|^2)^\g$, then the $K$-functions 
become $\wh K_r(f; \rho)_{p, \Wb_\BB^\g}$ for functions defined on the unit ball $\BB^{d+1}$. Moreover, 
by \eqref{st-uv2}, it is easy to verify that $\fD_i g = \left(\partial_i g\circ \psi^{-1} \right) \circ \psi$, which is
in fact how $\fD_i$ is defined. Consequently, we can conclude that 
$
   \wh K_r(f, \rho)_{p,\Wb_{\fc}^\g} 
       = \wh K_r( f\circ \psi^{-1}, \rho)_{p,\Wb_\BB^\g},
$
so that both the direct and the inverse estimates follow from the corresponding results on the unit ball 
\cite[Theorem 6.6]{DaiX1}, using the same argument of Theorem \ref{main-thmV0}. 
\end{proof}

In particular, if $\fc =1$, then $\Lambda_\fc^{d+1}$ becomes the circular cone $\VV^{d+1}$, for which 
$\phi_\fc (\xb,t) = \sqrt{1-\|\xb\|^2}$. It should mention that there is a modulus of smoothness that is 
equivalent to $\wh K_r(f; \rho)_{p,\Wb_\BB^\g}$ on the unit ball, which is defined in terms of 
forward difference operators in the Euler angles and the Ditzian-Totik modulus of smoothness in
the $t$-variable. One can map this modulus of smoothness to the domain $\Lambda_\fc^{d+1}$ and 
preserve more or less the part in the Euler angles, but the portion on the $t$-variable becomes much
more involved. 

Finally, it is worth pointing out that if the domain $\VV^{d+1}$ is equipped with the Jacobi weight 
$\Wb(\xb,t) = (t^2-\|x\|^2)^{\b-\f12} (1-t)^\g$, then the orthogonal structure holds for all polynomials,
and there is no need to restrict to polynomials that are even in the last variable. Moreover, both
spectral operator and addition formula exist \cite{X20}, and it fits into the general framework developed
in \cite{X21}. However, the apex of the cone becomes a singular point in this setting, which impacts 
the behavior of the approximation \cite{GX}. Indeed, while the characterization via the spectral operator, 
as in Theorem \ref{main-thmV0}, remains hold, the analog of $\wh K_r(f; \rho)$ on $\VV^{d+1}$ differs 
fundamentally in the two settings.

\end{document}